\theoremstyle{plain}
\newtheorem{theorem}{Theorem}
\newtheorem{proposition}[theorem]{Proposition}
\newtheorem{corollary}[theorem]{Corollary}
\newtheorem{lemma}[theorem]{Lemma}
\newtheorem{claim}{Claim}
\theoremstyle{definition}
\theoremstyle{remark}
\newtheorem{remark}{Remark}
\begin{document}
\title{A Harris-Kesten theorem for confetti percolation}
\def\A{\mathbb{A}}
\def\Ab{\mathcal{A}b}
\def\absq{{a^{\prime}}^2+{b^\prime}^2}
\def\AP{\text{G}}
\def\app{{a^{\prime\prime}}^2+1}
\def\argmin{\text{argmin}}
\def\arb{arbitrary }
\def\ass{assumption}
\def\arrow{\rightarrow}
\def\codim{\text{codim}}
\def\const{c}
\def\CCG{\text{G}}
\def\colim{\text{colim}}
\def\cond{condition }
\def\C{\mbox{\bf C}}
\def\dell{\partial}
\def\diam{\text{diam}}
\def\E{\mathbb{E}}
\def\Et{\text{Et}}
\def\es{\emptyset}
\def\exp{\text{exp}}
\def\fa{for all }
\def\Fk{\mathcal{F}_{k_0}}
\def\Fm{Furthermore}
\def\G{\mathbb{G}}
\def\gr{\text{gr}}
\def\H{\text{H}}
\def\Hom{\text{Hom}}
\def\Hs{\widetilde{X}_{H,0}}
\def\inj{\hookrightarrow}
\def\id{\text{id}}
\def\iiets{it is easy to see }
\def\iietc{it is easy to check }
\def\Iietc{It is easy to check }
\def\Iiets{It is easy to see }
\def\imp{\Rightarrow}
\def\({\left(}
\def\){\right)}
\def\[{\left[}
\def\]{\right]}
\def\lcu{\left\{}
\def\rcu{\right\}}
\def\im{\mbox{im}}
\def\Inv{\text{Inv}}
\def\Ind{\text{Ind}}
\def\Ip{In particular}
\def\ip{in particular}
\def\LB{\text{LB}}
\def\Lo{\mathcal{L}^o}
\def\mc{\mathcal}
\def\mb{\mathbb}
\def\mf{\mathbf}
\def\Hp{\wt{X}_{H,0}^{'}}
\def\N{\mathbb{N}}
\def\Npo{\mathbf{N}_{\mathcal{P}^o}}
\def\k{\overline{k}}
\def\K{\underline{K}}
\def\ldot{.}
\def\O{\mathcal{O}}
\def\Ob{Observe }
\def\ob{observe }
\def\opartial{\partial^{\text{out}}}
\def\ipartial{\partial^{\text{in}}}
\def\eopartial{\partial^{\text{out}}_{\text{ext}}}
\def\eipartial{\partial^{\text{in}}_{\text{ext}}}
\def\p{\prime}
\def\pp{{\prime\prime}}
\def\Po{\mathcal{P}^0}
\def\P{\mathbb{P}}
\def\Proj{\mbox{\bf P}}
\def\Q{\mathbb{Q}}
\def\QQ{\overline{\Q}}
\def\pr{\text{pr}}
\def\R{\mathbb{R}}
\def\Spec{\text{Spec}}
\def\st{such that }
\def\sl{sufficiently large }
\def\ss{sufficiently small }
\def\sot{so that }
\def\su{suppose }
\def\Su{Suppose }
\def\suf{sufficiently }
\def\udot{\mathaccent\cdot\cup}
\def\Set{\mathcal{S}et}
\def\T{\mathbb{T}}
\def\Twh{Then we have }
\def\te{there exist }
\def\tes{there exists }
\def\tptc{this proves the claim}
\def\Map{\text{Map}}
\def\VLo{\mc{VL}^o}
\def\wt{\widetilde}
\def\Wcon{We conclude }
\def\wcon{we conclude }
\def\wc{we compute }
\def\Wc{We compute }
\def\wo{we obtain }
\def\Z{\mathbb{Z}}
\def\ZSlab{\mathbb{Z}^2_L\times\{0\}^{d-2}}
\author{Christian Hirsch
\thanks{Institute of Stochastics, Ulm University, 89069 Ulm, Germany; E-mail: {\tt christian.hirsch@uni-ulm.de}.}}
\maketitle
\abstract{Percolation properties of the dead leaves model, also known as confetti percolation, are considered. 
More precisely, we prove that the critical probability for confetti percolation with square-shaped 
leaves is $1/2$. This result is related to a question of Benjamini and Schramm concerning disk-shaped leaves and can be seen as a variant of the Harris-Kesten theorem for bond percolation. The proof is based on techniques developed by Bollob\'as and Riordan to determine the critical probability for Voronoi and Johnson-Mehl percolation.}
\section{Introduction}
In recent years much progress has been made to determine the critical value of various two-dimensional configuration models in percolation theory and statistical mechanics that exhibit a more complex dependency structure than classical Bernoulli percolation, see e.g.~\cite{balint09, balint10,bbVoronoi,bbSharp,vdBerg,vdBerg2}. In this paper we consider a spatial percolation process based on the so-called \emph{dead leaves model} which is popular in stochastic geometry, see~\cite{bordDL,deadLeaves}. This model describes the coloring of $\R^2$ observed when covering the plane by black and white leaves according to a space-time Poisson process. A precise definition is given in Section~\ref{defSec}. In percolation literature this process is also known under the name of \emph{confetti percolation} and Benjamini and Schramm have conjectured in~\cite[Problem 5]{confetti} that $p_c=1/2$ for the case of disk-shaped leaves. We will show how the techniques from~\cite{bbVoronoi} and~\cite{bbJM} can be used to prove that the critical probability for square-shaped confetti percolation is precisely $1/2$. 

Let us give a rough outline of the main ideas. As in Bernoulli percolation the part $p_c\geq1/2$ follows from Zhang's elegant proof of $\theta(1/2)=0$. Indeed to apply his method we need to check positive correlation of black-increasing events (this is standard, see e.g.~\cite{bbBook,roy}) and the uniqueness of the infinite cluster. For the latter part one may use the geometric method of~\cite{2dUnique} which has the advantage that no additional discretization is needed.

Proving $p_c\leq1/2$ is less canonical and we follow the framework developed by Bollob\'as and Riordan in~\cite{bbVoronoi,bbJM}. Although one might hope at first that the problem of confetti percolation is considerably simpler than the problem of Voronoi percolation (for instance since the range of dependence is finite), still much work needs to be done to resolve discretization issues. 

In Section~\ref{defSec} we give detailed definitions of planar confetti-type percolation models and introduce further useful notation. The proof of $\theta(1/2)=0$ is provided in Section~\ref{zhangSec}. In fact, this property is shown not only for squares but for a more general class of shapes.
The proof of $p_c\leq1/2$ can be subdivided into two steps that are presented in Sections~\ref{unifBound} and~\ref{unifSec}. In Section~\ref{unifBound} we show that the assumptions of the general RSW-type theorem of Bollob\'as and Riordan (see e.g.~\cite{bbVoronoi}) are satisfied in the confetti model. Together with the sharp-threshold theorem~\cite[Lemma 1]{bbError} (which itself is a variant of the sharp-threshold result~\cite[Theorem 2.1]{friedgutKalai}) this result is used to complete the proof of $p_c\leq1/2$ in Section~\ref{unifSec}. To enhance readability, a crucial coupling construction is postponed to Section~\ref{appendix}.

We believe that a similar approach could be used to consider disk-shaped leaves, although starting from the current article this seems not completely straightforward. \Fm, we would be very interested in considering generalizations using different shapes of sufficiently symmetric leaves or where the leaves are rotated at random. An appealing idea to make the current argument less dependent on the specific shape of the leaf was suggested by an anonymous referee in Remark~\ref{generalApproach}. However, this approach depends on a non-trivial estimate on the tail behavior of the number of visible leaves of the dead leaves model in a bounded window.
\section{Notation and basic definitions}
\label{defSec}
For $(\Omega,\mc{F},\P)$ a probability space and $\{A_s\}_{s\in [0,\infty)}$ a family of events, we say that $A_s$ holds with high probability (short whp) if $\P(A_s)\to1$ as $s\to\infty$. 
For $\rho>0$, $u\in\R^2$ we denote by $Q_\rho(u)=u+\rho[-1/2,1/2]^2$ the square of side length $\rho$ centered at $u$ and write $Q(u)=Q_1(u)$. 

For $\varphi=\{x_n\}_{n\geq1}=\{(z_n,t_n,\sigma_n)\}_{n\geq1}\subset\R^2\times[-1,\infty)\times\{\pm1\}$ locally finite  we will often use the notation $y_n=(z_n,t_n)$ to denote the space-time coordinates of the element $(z_n,t_n,\sigma_n)$. Furthermore, we let $A$ denote a fixed Borel subset of $\R^2$, which we shall later refer to as a `fixed leaf'. We assume that
\begin{itemize}
\item $A$ is invariant with respect to rotations by $\pi/2$ and reflections at the coordinate axes,
\item $A$ is compact, path-connected and contains the origin $o$ in its interior,
\item $A$ is a regular closed set, i.e., it is the closure of its interior, and 
\item $\partial A$ is a Borel subset of $\R^2$ with finite one-dimensional Hausdorff measure.
\end{itemize}
Then the dead leaves process describes a sequence of colored leaves falling onto the plane according to the space-time process $\varphi$. To be more precise, at time $t_n$ a $z_n$-centered leaf appears that is of shape $A$ and color $\sigma_n$ (say black if $\sigma_n=1$ and white if $\sigma_n=-1$). This yields a coloring of the plane by defining the color of a point $u\in\R^2$ to be the color of the first leaf covering $u$ (or undefined if there is either no such leaf or if the color is non-unique). Note that we observe the configuration of leaves from below in order to obtain a static coloring of the entire plane.

To each such locally finite $\varphi$ we can associate a function $\mathsf{height}_\varphi:\R^2\to\R$ mapping a point $u\in\R^2$ to the time the leaf visible at $u$ had arrived. Formally, we put 
$$\mathsf{height}_\varphi(u)=\min(\{t_m:x_m=(z_m,t_m,\sigma_m)\in\varphi, u-z_m\in A\})$$ 
if this set is non-empty and all points $x_m$ assuming this minimum are of the same color, and $\mathsf{height}_\varphi(u)=-2$ otherwise. 
To each point of $\R^2$ we assign a number from $\{0,\pm1\}$ according to the function $\psi_{\varphi}:\R^2\to\{0,\pm1\}$ defined by $\psi(u)=\psi_\varphi(u)=\sigma_n$, where the index $n$ is chosen so that $\mathsf{height}_\varphi(u)=t_n$ and $\psi_\varphi(u)=0$ if $\mathsf{height}_\varphi(u)=-2$. 
Sometimes we also write \emph{$\psi$-black} to describe the attribute of being black in the coloring $\psi$. For instance, the connected components of $\psi^{-1}(1)$ are called \emph{$\psi$-black connected components}. Furthermore, for colorings $\psi_1,\psi_2:\R^2\to\{\pm1\}$ we say 
$\psi_{1}$ \emph{black-dominates} $\psi_{2}$ if $\psi_{1}(x)\geq\psi_{2}(x)$ holds for all $x\in\R^2$. 

One can add a probabilistic flavor to this model by replacing the locally finite set $\varphi$ by an independently $\{\pm1\}$-marked homogeneous Poisson point process $X\subset\R^2\times [0,\infty)$. Furthermore, we write $p=\P(\sigma_n=1)\in(0,1)$ for the probability that a fixed leaf is colored black in $X$. It is easy to see that in the coloring $\psi_X$ with probability $1$ all points of $\R^2$ are colored either black or white. We write $\theta(p,A)$ for the probability that the origin is contained in an unbounded $\psi$-black component. In case that the leaf $A=Q(o)$ is the unit square centered at the origin we also write $\theta(p)$ for $\theta(p,A)$.
 Furthermore, we use the standard definition of \emph{critical probability for percolation}, namely $p_{c,A}=\inf\{p>0:\theta(p,A)>0\}$ and write $p_c$
 in the special case $A=Q(o)$. Note also that it is straightforward to extend the definition of the confetti process so as to allow leaves of randomly varying size and shape.

\section{$\theta(1/2)=0$}
\label{zhangSec}
\subsection{Harris's inequality}
\label{harris}
The basic statement of Harris's inequality is that black-increasing events are positively correlated. Although the classical Harris inequality is stated in a lattice setting, some extra technical work makes it possible to adapt it to the situation of confetti percolation. Similar technical adjustments are explained in detail by Bollob\'as and Riordan in~\cite{bbBook} for the case of Voronoi percolation and we follow their presentation.

An event $E$ defined in terms of two independent Poisson processes $(X^+, X^-)$ is called \emph{black-increasing} if for every configuration $\omega_1=(X_1^+,X_1^-)$ and $\omega_2=(X_2^+,X_2^-)$ with $X_1^+\subset X_2^+$, $X_1^-\supset X_2^-$ and $\omega_1\in E$ we have $\omega_2\in E$. Black-increasing functions are defined similarly.

First, let us consider Harris's inequality when only the process $X^+$ is involved. Let $(\Omega_1,\mc{A}_1,\P_1)$ be the canonical probability space of the random variable $X^+$. Denote by $\Sigma_k$ the $\sigma$-algebra generated by the following information. Set $n=2^k$ and divide $[-n,n]^2\times [0,n]$ in $4n^6$ cubes of side length $1/n$. Decide for each of them whether it contains at least one point of $X^+$ or not. The local finiteness of $X^+$ then implies that $\{\Sigma_k\}_{k\geq1}$ forms a filtration of $(\Omega_1,\mc{A}_1,\P_1)$.

Let $g_1,g_2$ be increasing, bounded and measurable functions. As $\E(g_i|\Sigma_k)$ is an increasing function on the discrete product space determined by $\Sigma_k$, the lattice version of Harris's inequality implies $\E(\E(g_1|\Sigma_k)\E(g_2|\Sigma_k))\geq \E(g_1)\E(g_2)$. As $g_1,g_2$ are bounded, the martingale convergence theorem implies $\E(g_i|\Sigma_k)\xrightarrow{k\to\infty} g_i$ and dominated convergence yields $\E(g_1g_2)\geq \E(g_1)\E(g_2)$. Now we can state a version of  Harris's inequality that can be applied to confetti percolation.
\begin{lemma}
\label{harrisLem}
Let $(\Omega,\mc{A},\P)$ be the canonical product space of $(X^+,X^-)$ and let $B,B^\p\in\mc{A}$ be two black-increasing events. Then $\P(B\cap B^\p)\geq \P(B)\P(B^\p)$.
\end{lemma}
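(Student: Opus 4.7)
The plan is to bootstrap from the one-process Harris inequality (for increasing functions of $X^+$ alone) that the excerpt has already established, by conditioning first on $X^-$ and then taking expectations over $X^-$.

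First I would observe that the one-process inequality proved above is fully symmetric in the two roles. Writing $(\Omega_2,\mathcal{A}_2,\mathbb{P}_2)$ for the canonical space of $X^-$ and repeating the discretisation/martingale argument verbatim, one obtains $\mathbb{E}(h_1 h_2)\geq\mathbb{E}(h_1)\mathbb{E}(h_2)$ whenever $h_1,h_2$ are bounded measurable functions of $X^-$ that are both \emph{decreasing} (apply the increasing version to $-h_1,-h_2$ after shifting by a constant to restore boundedness-from-below, or equivalently to $C-h_i$).

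Next, I would write $\mathbb{P}=\mathbb{P}_1\otimes\mathbb{P}_2$ by independence, and use Fubini to condition on $X^-$. For $\mathbb{P}_2$-almost every realisation $X^-=\eta$, the slices $B_\eta=\{X^+:(X^+,\eta)\in B\}$ and $B'_\eta$ are both increasing events for $X^+$ (this is exactly the content of black-increase with the second coordinate frozen). The single-process inequality applied to the bounded increasing functions $\mathbf{1}_{B_\eta}$ and $\mathbf{1}_{B'_\eta}$ yields
\[
\mathbb{P}_1(B_\eta\cap B'_\eta)\;\geq\;\mathbb{P}_1(B_\eta)\,\mathbb{P}_1(B'_\eta).
\]
Define $f(\eta)=\mathbb{P}_1(B_\eta)$ and $f'(\eta)=\mathbb{P}_1(B'_\eta)$. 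Integrating the displayed inequality against $\mathbb{P}_2$ gives
\[
\mathbb{P}(B\cap B')\;\geq\;\int f(\eta)\,f'(\eta)\,d\mathbb{P}_2(\eta)\;=\;\mathbb{E}_2(f\,f').
\]

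Now the key observation is that $f$ and $f'$ are \emph{decreasing} bounded measurable functions of $\eta$: if $\eta\subset\eta'$ then the black-increase property forces $B_{\eta'}\subset B_\eta$ and $B'_{\eta'}\subset B'_\eta$, so $f(\eta')\leq f(\eta)$ and likewise for $f'$. Applying the one-process inequality for $X^-$ to the decreasing pair $(f,f')$ yields $\mathbb{E}_2(f\,f')\geq\mathbb{E}_2(f)\mathbb{E}_2(f')=\mathbb{P}(B)\mathbb{P}(B')$, completing the proof.

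The only genuinely delicate step is checking that $f$ and $f'$ are measurable (to apply the $X^-$-side inequality) and that the sliced events $B_\eta$ really are measurable in $X^+$ alone; this is the standard Fubini-type argument on the product of canonical Poisson spaces and requires no new ideas beyond those in~\cite{bbBook}. Everything else is direct bookkeeping on top of the one-process inequality already in hand.
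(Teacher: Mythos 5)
Your proposal is correct and follows essentially the same route as the paper: condition on $X^-$, apply the one-process Harris inequality to the increasing slices in $X^+$, observe that the resulting conditional probabilities are decreasing functions of $X^-$, and apply the one-process inequality again on the $X^-$ side. The paper phrases this in terms of conditional expectations $\E(f_i\mid X^-)$ rather than explicit Fubini slices, but the content and decomposition are identical.
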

\begin{proof}
Let $f_1=1_B$ and $f_2=1_{B^\p}$. Fixing $X^-$, we obtain $\E(f_1f_2|X^-)\geq\E(f_1|X^-)\E(f_2|X^-)$ and taking expectations yields $\E(f_1f_2)\geq\E(\E(f_1|X^-)\E(f_2|X^-))$. Now observe that $g_i=\E(f_i|X^-)$, $i\in\{1,2\}$ is decreasing in $X^-$. This yields $\E(\E(f_1|X^-)\E(f_2|X^-))\geq\E(f_1)\E(f_2)$. Combining these two inequalities completes the proof of the lemma.
\end{proof}
In particular, if $X\subset \R^2\times[0,\infty)\times\{\pm1\}$ is an independently $\{\pm1\}$-marked homogeneous Poisson point process, then Lemma~\ref{harrisLem} can be applied to $X^+=\{y_n:\sigma_n=1\}$ and $X^-=\{y_n:\sigma_n=-1\}$. Also note that Lemma~\ref{harrisLem} is a special case of~\cite[Theorem 1.4]{fock}.
\subsection{Uniqueness of the unbounded black connected component}
Let $A\subset\R^2$ denote a fixed leaf as described in Section~\ref{defSec}. In this subsection we consider the uniqueness of the unbounded black connected component for confetti percolation. To be more precise, we show the following result.
\begin{proposition}
\label{uniqProp}
Denote by $N$ the (random) number of unbounded black connected components. Then $\P(N=1)=1$ or $\P(N=0)=1$.
\end{proposition}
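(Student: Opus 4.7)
My plan is to adapt the two-dimensional geometric uniqueness method of~\cite{2dUnique} to the confetti setting. The argument combines three ingredients: translation-invariance and ergodicity of $X$, the Harris inequality (Lemma~\ref{harrisLem}), and a continuum finite-energy property for the confetti process.

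First, I would show that $N$ is almost surely constant. The underlying marked Poisson process $X$ is stationary and ergodic under translations of $\R^2$, and $N$ is translation-invariant because translating $X$ merely translates the coloring $\psi_X$ without altering its component structure. Ergodicity then yields $N = n_0$ a.s.\ for some $n_0 \in \{0, 1, 2, \ldots, \infty\}$.

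Next, I would establish a continuum finite-energy property. For any bounded Borel $D \subset \R^2$ and $T > 0$, the probability that $X$ has no mark in the space-time box $D \times [0, T] \times \{\pm 1\}$ is $\exp(-2\lambda T |D|) > 0$, where $\lambda$ is the intensity of $X$; conditional on this event, Palm calculus for the Poisson process lets us insert any finite family of leaves in the box. In particular, given a bounded region $R \subset \R^2$, one can with positive conditional probability place an early black leaf of shape $A$ whose footprint contains $R$ and whose arrival time lies strictly below the heights of every already-visible leaf on $R$, thereby forcing the modified coloring to equal $1$ on $R$.

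Finally, I would rule out $n_0 \geq 2$ using the planar argument of~\cite{2dUnique}. Assuming $n_0 \geq 2$, for all sufficiently large $\rho$ the square $Q_\rho(o)$ meets at least two distinct unbounded $\psi$-black components with probability close to $1$. The finite-energy step then permits, with positive conditional probability, a local modification inside a bounded neighborhood of $Q_\rho(o)$ that merges two such components and produces at least three disjoint unbounded black arms leaving $\partial Q_\rho(o)$. Counting such ``trifurcation'' squares along $\partial Q_\rho(o)$ inside a much larger box and comparing with the linear growth of the boundary yields the desired contradiction, simultaneously excluding $n_0 = \infty$.

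The main obstacle is verifying that the merging modification genuinely produces three disjoint \emph{unbounded} arms in the modified configuration, rather than arms that get reconnected outside $Q_\rho(o)$. This is precisely where the two-dimensional nature of the problem is essential: planarity topologically separates any pair of disjoint unbounded black clusters by unbounded white regions, so three arms emerging from disjoint arcs of $\partial Q_\rho(o)$ cannot subsequently merge at infinity without crossing a white unbounded region. The regularity and $\pi/2$-symmetry of $A$, together with the fact that $A$ contains $o$ in its interior, are used to ensure that the overriding leaf can be chosen with a positive-probability shape-preserving modification, avoiding any discretization of $\R^2$.
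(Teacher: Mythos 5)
Your proposal takes a genuinely different route from the paper. You propose the Burton--Keane scheme (ergodicity for a.s.\ constancy of $N$, a continuum finite-energy property, and a trifurcation-counting argument), whereas the paper adapts the purely geometric two-dimensional method of Gandolfi, Keane and Russo: it reduces uniqueness to a uniform lower bound on the probability that $Q_m(o)$ is surrounded by a closed black circuit (Lemma~\ref{boxLem}), then establishes that bound in two cases (percolation vs.\ no percolation in the upper half-plane) via a multiple ergodic lemma, Harris's inequality, and, in the second case, winding-number arguments. The paper explicitly chose this geometric route because it ``has the advantage that no additional discretization is needed,'' and this is precisely the cost of your approach: carrying out trifurcation counting in the continuum requires discretizing $\R^2$ into boxes so that trifurcations can be localized and counted, and one must then verify that the discretized trifurcations satisfy the tree-combinatorial bound.

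There are also gaps in the Burton--Keane sketch as you state it. First, the argument has a two-step structure that you conflate: one first shows $N\in\{0,1,\infty\}$ a.s.\ (if $2\le N<\infty$, finite energy lets a box merge all clusters, contradicting ergodicity), and only then excludes $N=\infty$ by the trifurcation count. Merging \emph{two} clusters inside $Q_\rho(o)$ produces only two arms, not three, so the trifurcation step needs $N\ge 3$ clusters meeting the box, which is guaranteed only after the first reduction. Second, your worry about arms ``merging at infinity'' is misdirected: the counting argument compares the number of trifurcations inside a large box $L$ with $|\partial L|$ via a tree bound, so what matters is that the arms exit $L$ through disjoint boundary portions, not that they stay separated at infinity; invoking a separation by ``unbounded white regions'' would also be dangerously circular, since uniqueness of the unbounded white cluster is exactly what is being established. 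Finally, the finite-energy modification needs care: the leaf shape $A$ is fixed, so covering an arbitrary bounded $R$ may require inserting several early black leaves, and for a Poisson process the insertion must be phrased as a positive-probability event (e.g.\ conditioning on suitable small boxes containing exactly one black point with small arrival time) rather than a deterministic insertion. None of these issues is fatal, but they are the nontrivial part of making Burton--Keane rigorous here, and the paper's Gandolfi--Keane--Russo route was chosen precisely to sidestep them.
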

The proof of Proposition~\ref{uniqProp} is a slight variation to an argument developed by Gandolfi, Keane and Russo in~\cite{2dUnique}. Indeed, their method is based purely on geometric properties of $\R^2$ (such as the Jordan curve theorem) and works just as well in continuous situations. A similar (but in fact more complicated) adaptation was considered in~\cite{harrisPhd}. The proof depends on the following properties of the percolation model:
\begin{enumerate}
\item[(A0)]  The origin lies in the interior of a connected component (i.e., either black connected or white connected) with probability $1$ and it lies in the interior of a white connected component with positive probability.
\item[(A1)] $\P$ is invariant under horizontal and vertical translations, under rotations by $\pi/2$, and under reflections at the coordinate axes.
\item[(A2)] $\P$ is ergodic with respect to (discrete) horizontal translations and, separately, with respect to (discrete) vertical translations.
\item[(A3)] Black-increasing events  are positively correlated.
\item[(A4)] There exists at least one unbounded connected component with positive probability.
\end{enumerate} 
Note that confetti percolation satisfies the first four of these items. For ergodicity, this is a consequence of the mixing property of the three-dimensional homogeneous Poisson point process, see~\cite[Chapter 12.3]{pp1}. In the following, we proceed closely along the original presentation in~\cite{2dUnique}. 
\subsubsection{Preliminaries}
Before we begin with the proof of Proposition~\ref{uniqProp} we collect some preliminary results. \Fm, when dealing with confetti percolation it suffices to consider self-avoiding piecewise linear paths consisting of line segments between points of rational coordinates. We say that a path is \emph{closed} if it starts and ends at the same point. First, we note that it suffices to prove a uniform lower bound for the probability that squares are surrounded by black paths.
\begin{lemma}
\label{boxLem}
Assume \tes $\delta>0$ \st \fa $m\ge1$,
\begin{align}
\label{boxLemEq}
\P\(\text{there exists a closed black path surrounding }Q_m(o)\)\ge\delta.
\end{align}
Then with probability $1$ every bounded subset of $\R^2$ is surrounded by a closed black path. In particular, $\P\(N=1\)=1$.
\end{lemma}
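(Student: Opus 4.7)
The approach is to lift the uniform lower bound in \eqref{boxLemEq} to full probability by combining monotonicity with the ergodicity assumption~(A2), and then to extract uniqueness and existence of the unbounded black cluster from the resulting a.s.\ statement. For each integer $M\geq1$, let
\[E_M=\{\text{there exists a closed black path surrounding }Q_M(o)\}.\]
The key observation is that $E_{M'}\subseteq E_M$ whenever $M\leq M'$, since any closed black path surrounding the larger box automatically surrounds the smaller one. Consequently, the event $F:=\bigcap_{M\geq1}E_M$ coincides with the event that every bounded subset of $\R^2$ is surrounded by a closed black path (every bounded set sits inside some $Q_M(o)$), and by continuity of measure together with \eqref{boxLemEq},
\[\P(F)=\lim_{M\to\infty}\P(E_M)\geq\delta>0.\]

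The next step is to show that $F$ is invariant under translations of $\R^2$. Shifting the marked Poisson process $X$ by a vector $x$ produces the coloring $u\mapsto\psi_X(u-x)$, so the closed black paths in the shifted coloring are exactly the $x$-translates of the closed $\psi_X$-black paths. The family of bounded subsets of $\R^2$ is itself translation-invariant, and hence so is the statement ``every bounded subset is surrounded by a closed black path''. Invoking ergodicity of $\P$ with respect to (discrete) horizontal translations, assumption~(A2), I then obtain $\P(F)\in\{0,1\}$, and combined with $\P(F)\geq\delta$ this forces $\P(F)=1$. This is the main assertion of the lemma.

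It remains to derive $\P(N=1)=1$. First, on $F$ there is at most one unbounded black component: if there were two distinct ones $C_1\neq C_2$, I could choose $M$ so large that $Q_M(o)$ contains a point of each $C_i$ (possible since both are nonempty), and then the surrounding path $\pi_M$ provided by $F$ would have to be crossed by each $C_i$ on its way to infinity; since $\pi_M$ is itself black-connected, this forces $C_1=C_2$, a contradiction. Second, on $F$ the cluster containing the surrounding path $\pi_1$ of $Q_1(o)$ cannot be bounded, because otherwise it would be contained in some $Q_M(o)$, and the path $\pi_M$ promised by $F$ would lie in a disjoint black cluster, contradicting the uniqueness just established. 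Hence $N=1$ almost surely. The only step that calls for genuine care is the verification of translation-invariance of $F$, which follows transparently from the relation between $\psi_X$ and its translate; the rest is a soft combination of monotonicity, ergodicity, and elementary planar topology.
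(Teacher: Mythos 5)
Your proof of the main assertion (that with probability $1$ every bounded subset of $\R^2$ is surrounded by a closed black path) is essentially the paper's argument, only spelled out in more detail: you make explicit both the nesting $E_{M'}\subseteq E_M$ for $M\le M'$ (so that continuity of measure turns the uniform bound $\delta$ into $\P(F)\ge\delta$) and the translation-invariance of the limiting event $F$ (so that ergodicity yields $\P(F)\in\{0,1\}$). The paper states both steps tersely, but the substance is identical. That part is fine.

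The gap is in the derivation of $\P(N=1)=1$. Your \emph{at most one} argument is correct: any two unbounded black clusters meeting $Q_M(o)$ must each intersect the surrounding black path $\pi_M$ on the way to infinity, and since $\pi_M$ is black-connected they coincide. But your \emph{at least one} argument does not go through. You suppose the cluster $C$ of $\pi_1$ is bounded, take $M$ with $C\subset Q_M(o)$, observe that $\pi_M$ lies in a cluster disjoint from $C$, and conclude this ``contradicts the uniqueness just established.'' It does not: what you established is uniqueness of \emph{unbounded} clusters, and both $C$ and the cluster of $\pi_M$ could perfectly well be bounded. Iterating only produces infinitely many distinct bounded black clusters, which is not a contradiction; indeed $F$ alone does not rule out a ``concentric annuli'' scenario with $N=0$. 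The existence of an unbounded black cluster has to come from outside the event $F$ --- in the paper it is supplied by the ambient context when the lemma is invoked inside the proof of Proposition~\ref{uniqProp}, where black percolation is already assumed to occur with positive probability (hence, by ergodicity, $N\ge1$ almost surely). The paper itself gives no proof of the ``in particular'' clause, so you were filling a genuine omission, but this step as you have written it does not close.
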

\begin{proof}
If we denote by $A$ the event that for every $m\ge1$ the set $Q_m(o)$ is surrounded by a closed black path, then ergodicity implies $\P\(A\)\in\{0,1\}$. \Fm, as we assumed the existence of $\delta>0$ such that~\eqref{boxLemEq} holds uniformly \fa $m\ge1$, we conclude that $\P\(A\)>0$. Hence, the event $A$ occurs with probability $1$ and every bounded subset of $\R^2$ is surrounded by a closed black path.
\end{proof}
For $m\ge1$ we denote by $H_m=\lcu (z_1,z_2)\in\R^2: \left|z_2\right|\le m \rcu $ the horizontal strip of height $2m$, centered at the $x$-axis. \Fm, for Borel sets $A,B,C\subset\R^2$ with $A\cup B\subset C$ we say that $[A,B;C]$ occurs if \tes a black path in $C$ that starts in $A$ and ends in $B$. Similarly, we say that $[A,\infty;C]$ occurs if $A$ has non-empty intersection with an unbounded connected component of black points in $C$. Then  percolation cannot occur in horizontal strips.

\begin{lemma}
\label{stripLem}
Let $m\ge1$ be arbitrary. Then $\P\([z,\infty; H_m]\)=0$ \fa $z\in H_m$. 
\end{lemma}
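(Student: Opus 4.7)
The plan is to argue by contradiction, constructing vertical white ``cuts'' of the strip that necessarily block any horizontal black percolation.

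Assume, contrary to the claim, that $\P([z, \infty; H_m]) > 0$ for some $m \ge 1$ and some $z \in H_m$. Since the vertical extent of $H_m$ is bounded by $2m$, any infinite connected subset of $H_m$ must be unbounded in the $x$-direction, so any unbounded black connected component of $H_m$ extends to $+\infty$ or to $-\infty$ in the horizontal direction. By horizontal translation invariance (A1), the probability above depends only on the vertical coordinate of $z$.

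The crucial step is to show that for some fixed $L = L(m) > 0$ the event
$$W_L \;=\; \{\text{there exists a white path from }\{y=-m\}\text{ to }\{y=m\}\text{ contained in }[0, L] \times [-m, m]\}$$
has strictly positive probability. Assumption (A0) guarantees that any fixed point has positive probability of lying in the interior of a white connected component, but this alone does not yield a crossing. To promote (A0) into an actual white crossing, one combines the FKG inequality (which by the same argument as in Lemma~\ref{harrisLem} applies also to white-increasing events) with a concrete local Poisson construction. Specifically, for appropriate $L$, one places a finite chain of white leaves of shape $A$ along a vertical segment in $[0, L] \times [-m, m]$ and arranges that these leaves arrive before any other leaf covers the relevant region; since $A$ is compact, path-connected and contains the origin in its interior, such a chain produces a white path crossing the strip from $\{y=-m\}$ to $\{y=m\}$. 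By the Poisson structure of $X$ this configuration has positive probability, so $\P(W_L) > 0$.

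Once $\P(W_L) > 0$ is available, horizontal ergodicity (A2) together with the Birkhoff ergodic theorem yields that, almost surely, infinitely many integer translates of $W_L$ occur in disjoint horizontal windows $[kL, (k+1)L] \times [-m,m]$. Each realization gives a vertical white crossing of $H_m$, and between any two consecutive such crossings every black connected component of $H_m$ is trapped in a bounded horizontal window and hence bounded. Consequently no unbounded black component can exist in $H_m$, contradicting the initial assumption and giving $\P([z,\infty;H_m]) = 0$.

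The main obstacle is the positive probability of a white crossing; the remaining steps are routine applications of the ergodic theorem and elementary planar topology. Note that this step goes beyond the abstract assumptions (A0)--(A4) and exploits the specific geometric structure of the confetti model, namely the shape properties of the fixed leaf $A$ and the explicit Poisson arrival mechanism, in order to carry out the local chain construction of white leaves.
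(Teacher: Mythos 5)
Your overall strategy is the same as the paper's: show that vertical white crossings of the strip occur with positive probability, invoke horizontal ergodicity to get infinitely many of them, and conclude that any black component of $H_m$ is trapped in a bounded window. Where you diverge is in the positive-probability step. You claim that (A0) alone does not yield a crossing and therefore resort to a concrete Poisson chain construction (placing white leaves of shape $A$ along a vertical segment and requiring them to arrive first), observing that this takes you outside the abstract framework (A0)--(A4). The paper instead stays entirely inside that framework: by (A0) there is a fixed $\varepsilon>0$ with $\P(\{0\}\times[-\varepsilon,\varepsilon]\text{ is white})>0$; cover $\{0\}\times[-m,m]$ by finitely many vertical translates of $\{0\}\times[-\varepsilon,\varepsilon]$; by (A1) each has the same positive probability of being white; these are black-decreasing events, so (A3) gives $\P(\{0\}\times[-m,m]\text{ is white})>0$. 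This covering-plus-FKG argument is cleaner and keeps Lemma~\ref{stripLem} valid under the abstract hypotheses, which matters for the surrounding uniqueness proof that is deliberately stated at that level of generality. Also note a slight redundancy in your write-up: you invoke both FKG and the concrete construction, but if you have the concrete Poisson construction you do not need FKG to get positive probability, and conversely FKG plus (A0)+(A1) already does the job without any explicit construction. Your proof is correct, but the concrete construction is an unnecessary detour.
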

\begin{proof}
From Assumption (A0) we conclude that there exists $\varepsilon>0$ \st with positive probability the segment $\{0\}\times [-\varepsilon,\varepsilon]$ is colored white. Stationarity and (A3) therefore imply that with positive probability the entire segment $\{0\}\times[-m,m]$ is white. Hence, denoting by $A$ the event that there exist arbitrarily large $k\ge1$ \st $\{k\}\times[-m,m]$ is white and arbitrarily large $k\ge1$ \st $\{-k\}\times[-m,m]$ is white, the ergodicity assumption implies that $\P\(A\)=1$. The proof is completed by noting that $A$ is contained in the complement of $[z,\infty; H_m]$.
\end{proof}
Denote by $S,T:\R^2\to\R^2$, $S(z)=z+(1,0)$ and $T(z)=z+(0,1)$ the horizontal and vertical translation, respectively. Next, we recall an ergodic result whose proof can be found in~\cite{2dUnique}.
\begin{lemma}[Multiple Ergodic Lemma]
\label{multErgLem}
Let $A_0$, $A_1$ and $A_2$ be monotonic (i.e., either black-increasing or black-decreasing) events. Then
$$D\mbox{-}\text{lim}_{N\to\infty}\P\(A_0\cap S^{-N}A_1\cap S^{-2N}A_2\)=\P\(A_0\)\P\(A_1\)\P\(A_2\),$$
where we write $D\mbox{-}\text{lim}_{N\to\infty}\alpha_N=\alpha$ if there exists a subsequence of density $1$ converging to $\alpha$. 
\end{lemma}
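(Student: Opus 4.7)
The plan is to combine Harris's inequality (Lemma~\ref{harrisLem}) with the mean ergodic theorem and a local approximation by cylinder events of the underlying marked Poisson process. First I would reduce to the case in which $A_0,A_1,A_2$ are all black-increasing; if some are black-decreasing, the argument carries through essentially unchanged after passing to complements and adjusting signs in the final product. Under this reduction, each $S^{-N}A_i$ is again black-increasing because $S$ preserves the natural partial order on configurations $(X^+,X^-)$, so two successive applications of Harris's inequality give the pointwise lower bound
$$\P(A_0\cap S^{-N}A_1\cap S^{-2N}A_2)\geq \P(A_0)\,\P(S^{-N}A_1\cap S^{-2N}A_2)\geq \P(A_0)\P(A_1)\P(A_2),$$
valid for every $N\geq 1$.

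The main content, and the principal obstacle, is to match this with the corresponding upper bound on the Cesàro mean
$$\frac{1}{M}\sum_{N=1}^M \P(A_0\cap S^{-N}A_1\cap S^{-2N}A_2)\xrightarrow{M\to\infty} \P(A_0)\P(A_1)\P(A_2).$$
Here I would exploit the Poisson structure: given $\varepsilon>0$, each event $A_i$ can be approximated by a cylinder event $A_i^K$ depending only on the restriction of $(X^+,X^-)$ to a bounded box $[-K,K]^2\times[0,K]$, with $\P(A_i\,\triangle\, A_i^K)<\varepsilon$. This is possible because the coloring $\psi$ at any fixed point is determined by leaves whose centres lie in the bounded neighbourhood $u-A$ of that point, and local finiteness of the Poisson process combined with dominated convergence allows any event to be approximated by a local one. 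Once $N$ exceeds $2K$ plus the diameter of $A$, the three relevant spatial boxes, translated by $o$, $(N,0)$ and $(2N,0)$, are mutually disjoint, and the independence property of the marked Poisson process yields the exact factorisation $\P(A_0^K\cap S^{-N}A_1^K\cap S^{-2N}A_2^K)=\P(A_0^K)\P(A_1^K)\P(A_2^K)$. A standard $3\varepsilon$-argument then promotes this to the displayed Cesàro convergence. Carrying out the local approximation and controlling the errors uniformly in $N$ is the technical heart of the argument.

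To conclude, I would combine the two estimates via the elementary observation that if $(\alpha_N)$ is a bounded sequence with $\alpha_N\geq c$ for every $N$ and $M^{-1}\sum_{N=1}^M\alpha_N\to c$, then $\alpha_N\to c$ along a subsequence of density one: for each $\varepsilon>0$ the set $\{N:\alpha_N>c+\varepsilon\}$ has upper density at most $\varepsilon^{-1}\limsup_M M^{-1}\sum_{N\leq M}(\alpha_N-c)=0$, and a diagonal choice over $\varepsilon=1/k$ yields the required density-one subsequence. Applied with $\alpha_N=\P(A_0\cap S^{-N}A_1\cap S^{-2N}A_2)$ and $c=\P(A_0)\P(A_1)\P(A_2)$, this produces the claimed $D$-limit.
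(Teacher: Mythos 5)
The paper does not give its own proof of this lemma; it simply cites Gandolfi, Keane and Russo~\cite{2dUnique}, whose argument relies only on ergodicity of the shift together with Harris's inequality. Your proof takes a genuinely different route: the cylinder-approximation step is the heart, and it exploits the finite-range-of-dependence (indeed mixing) structure of the Poisson-driven confetti process rather than monotonicity. That step is correct as you sketch it, and it in fact yields the \emph{genuine} limit $\lim_{N\to\infty}\P(A_0\cap S^{-N}A_1\cap S^{-2N}A_2)=\P(A_0)\P(A_1)\P(A_2)$ for \emph{arbitrary} events in the ambient $\sigma$-algebra, with no appeal to monotonicity at all. This makes your first and third steps logically superfluous: once disjoint spatial supports give exact independence and a $3\varepsilon$-argument closes the gap, you never need the Harris lower bound or the Ces\`aro-plus-lower-bound device for extracting a density-one subsequence. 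In short, you prove more with less in this specific model.

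Two comments on the parts you did not need. The reduction ``if some are black-decreasing, pass to complements'' does not actually work for the Harris lower bound: if $A_0$ is black-increasing and $A_1$ is black-decreasing they are \emph{negatively} correlated, so $\P(A_0\cap S^{-N}A_1)\le\P(A_0)\P(A_1)$, and complementing $A_1$ does not restore the inequality because of the cross terms. The correct (GKR-style) way to handle mixed monotonicity is to observe that the one-sided bound may point either way, and combine whichever one-sided bound you have with Ces\`aro convergence to the product. Since you do not actually need this step, there is no gap in your proof, but the claimed reduction would be a gap in a setting where the cylinder argument were unavailable. Second, the trade-off between the two approaches is worth noting: the GKR route requires only ergodicity and positive correlation and so works for dependent lattice models without any finite-range assumption, which is precisely why the lemma is stated with monotonicity and a density-one limit; your route requires mixing (available here because disjoint bounded regions of a Poisson process are independent), and in exchange gives the stronger unconditional limit. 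For confetti percolation either argument suffices, and yours is arguably simpler.
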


As an important consequence of Lemma~\ref{multErgLem} we obtain lower bounds for percolation probabilities outside bounded Borel sets that are far away from the origin.
\begin{corollary}
\label{multErgCor}
Let $U,V\subset\R^2$ be bounded Borel sets and let $W\subset \R^2$ be an unbounded Borel set. \Fm, let $z\in W$ be arbitrary. Then there exist arbitrarily large integers $N$ \st 
$$\P\(\[z,\infty;W\setminus \(S^{-N}U\cup S^NV\)\]\)\ge \frac 12 \P\([z,\infty;W]\).$$
\end{corollary}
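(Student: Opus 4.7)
The strategy is to engineer a monotonic event on which $S^{-N}U$ and $S^NV$ are both entirely $\psi$-white, forcing every $\psi$-black path to avoid them, and then to use the Multiple Ergodic Lemma to quantify its probability along a density-$1$ subsequence.

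Introduce the black-decreasing events $G_U = \{\psi_X(u)=-1\ \forall u\in U\}$ and $G_V = \{\psi_X(v)=-1\ \forall v\in V\}$. Both have strictly positive probability: a standard Poisson calculation, using the compactness of $U,V$ and the non-empty interior of $A$, shows that with positive probability a single white leaf arrives strictly earlier than every other leaf meeting $U$ and entirely covers $U$ (and similarly for $V$). On the event $[z,\infty;W]\cap S^{-N}G_U\cap S^NG_V$, every point of $S^{-N}U\cup S^NV$ is $\psi$-white, so every $\psi$-black path from $z$ to infinity in $W$ lies in $W\setminus(S^{-N}U\cup S^NV)$; this gives the key containment
\begin{equation*}
[z,\infty;W]\cap S^{-N}G_U\cap S^NG_V \ \subseteq\ [z,\infty;W\setminus(S^{-N}U\cup S^NV)].
\end{equation*}

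Apply Lemma~\ref{multErgLem} to the three monotonic events $G_V,\ [z,\infty;W],\ G_U$ and translate the overall configuration by $+N$ (using the translation invariance from Assumption~(A1)) to convert the asymmetric shifts $0,-N,-2N$ into symmetric shifts $+N,0,-N$. This yields
\begin{equation*}
D\text{-}\lim_{N\to\infty}\P\(S^NG_V\cap[z,\infty;W]\cap S^{-N}G_U\)=\P(G_U)\P(G_V)\P([z,\infty;W]),
\end{equation*}
so that for a density-$1$ subsequence of $N$ the probability on the left is at least $\tfrac12\P(G_U)\P(G_V)\P([z,\infty;W])$. Combined with the containment above, this provides the desired lower bound on $\P([z,\infty;W\setminus(S^{-N}U\cup S^NV)])$ for arbitrarily large $N$.

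The main obstacle is that the argument produces the constant $\tfrac12\P(G_U)\P(G_V)$ rather than the stated $\tfrac12$, and $\P(G_U)\P(G_V)$ can be arbitrarily small when $U,V$ are large. To recover exactly $\tfrac12$, one must either observe that the downstream uses of the corollary require only some positive multiple of $\P([z,\infty;W])$ (in which case $\tfrac12$ is a convenient placeholder), or sharpen the argument by replacing $G_U,G_V$ by a monotone family of events whose probabilities tend to $1$ (for instance, events depending only on leaves arriving up to a large time $M$, letting $M$ grow suitably with $N$), so that the extra factor becomes negligible in the density-$1$ convergence.
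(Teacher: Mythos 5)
Your setup is on the right track---introducing the white-covering events (the paper calls them $A_0=\{U\text{ white}\}$, $A_2=\{V\text{ white}\}$; your $G_U,G_V$), noting the key containment, and applying the Multiple Ergodic Lemma---but the gap you flagged at the end is a genuine one, and neither of the two repairs you suggest is what closes it. In particular, weakening the conclusion to ``some positive multiple'' would prove a weaker corollary than stated, and the family-of-events idea is not needed. The paper recovers the exact constant $\tfrac12$ by a short conditional-probability trick that your proposal is missing.

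First, your containment is actually an equality: writing $A_1=[z,\infty;W]$ and $\widetilde{A}_N=[z,\infty;W\setminus(S^{-N}U\cup S^NV)]$, one has
\begin{equation*}
S^{N}A_0\cap A_1\cap S^{-N}A_2 \;=\; S^{N}A_0\cap \widetilde{A}_N\cap S^{-N}A_2,
\end{equation*}
since the reverse inclusion is trivial ($\widetilde{A}_N\subseteq A_1$). Now $\widetilde{A}_N$ is black-increasing while $S^{N}A_0\cap S^{-N}A_2$ is black-decreasing, so Harris's inequality (Lemma~\ref{harrisLem} applied to the complement of the decreasing event) gives the negative correlation
\begin{equation*}
\P\bigl(\widetilde{A}_N\bigr)\,\P\bigl(S^{N}A_0\cap S^{-N}A_2\bigr)\;\ge\;\P\bigl(S^{N}A_0\cap \widetilde{A}_N\cap S^{-N}A_2\bigr)=\P\bigl(S^{N}A_0\cap A_1\cap S^{-N}A_2\bigr).
\end{equation*}
Then apply Lemma~\ref{multErgLem} \emph{twice}: once to the triple $(A_0,A_1,A_2)$ and once to $(A_0,\Omega,A_2)$, obtaining the density-one limits $\P(A_0)\P(A_1)\P(A_2)$ and $\P(A_0)\P(A_2)$ respectively. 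Taking the quotient on a common density-one subsequence shows $\P(\widetilde{A}_N)\ge\P(A_1)/2$ for arbitrarily large $N$; the unwanted factor $\P(A_0)\P(A_2)=\P(G_U)\P(G_V)$ cancels. In your proposal you effectively used only the trivial bound $\P(\widetilde{A}_N)\ge\P(S^{N}A_0\cap A_1\cap S^{-N}A_2)$, with no denominator to absorb $\P(G_U)\P(G_V)$, which is precisely why you were stuck at the weaker constant.
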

\begin{proof}
Consider the events 
$A_0=\{U\text{ is white}\}$, 
$A_1=[z,\infty;W]$,
$A_2=\{V\text{ is white}\}$,
$\wt{A_N}=[z,\infty;W\setminus \(S^{-N}U\cup S^NV\)]$, 
and note that 
$$S^NA_0\cap A_1\cap S^{-N}A_2=S^NA_0\cap \wt{A_N}\cap S^{-N}A_2.$$
Hence, 
$$\P\(\wt{A_N}\)\P\(S^NA_0\cap S^{-N}A_2\)\ge \P\(S^NA_0\cap A_1\cap S^{-N}A_2\).$$
Then Lemma~\ref{multErgLem} implies 
$$D\mbox{-}\text{lim}_{N\to\infty}\P\(S^{N}A_0\cap A_1\cap S^{-N}A_2\)=\P\(A_0\)\P\(A_1\)\P\(A_2\),$$
and
$$D\mbox{-}\text{lim}_{N\to\infty}\P\(S^{N}A_0\cap  S^{-N}A_2\)=\P\(A_0\)\P\(A_2\).$$
Combining the latter identities yields 
$\P\(\wt{A_N}\)\ge\P\(A_1\)/2,$
for arbitrarily high $N\ge1$.
\end{proof}
The proof of Proposition~\ref{uniqProp} proceeds in two steps. The first step considers the case, where there exists an unbounded black connected component in the upper half-plane of $\R^2$.

\subsubsection{First case: percolation in the upper half-plane}
Denote by $H^+=\{(z_1,z_2)\in\R^2: z_2\ge0\}$ the upper half-plane in $\R^2$. In the first step of the proof we assume that
$\P\([o,\infty;H^+]\)>0$ and denote this probability by $q$. 

Let $m\ge1$ be arbitrary and put $B=Q_m(o)$. Applying Corollary~\ref{multErgCor} with $z=o$, $U=\es$, $V=B$, $W=H^+$ and $T$ instead of $S$ yields 
\begin{align*}
\P\(\[y_{-N},\infty; T^{-N}H^+\setminus B\]\)&=\P\(\[o,\infty; H^+\setminus T^NB\]\) \ge q/2,
\end{align*}
for arbitrarily large integers $N\ge1$. Lemma~\ref{stripLem} implies that 
$\P\(\[y_{-N},\infty;H_N\]\)=0,$
\sot 
$$\P\(\[y_{-N},L_N;H_N\setminus B\]\)\ge q/2,$$
where $L_N=\{(z_1,z_2)\in\R^2:z_2=N\}\subset H^+$ denotes the horizontal line at distance $N$ from the origin. Using the decomposition $L_N=L_N^+\cup L_N^-$ with
$$L_N^\sigma=\lcu (z_1,z_2)\in L_N: \sigma z_1\ge0\rcu,\quad\sigma \in\{+,-\},$$
we obtain 
$$\P\(\[y_{-N},L_N^+;L_N^+\cup H_N\setminus (B\cup L_N)\]\)\ge q/4.$$
Hence, by horizontal reflection, 
$$\P\(\[y_{N},L_{-N}^+;L_{-N}^+\cup H_N\setminus (B\cup L_{-N})\]\)\ge q/4.$$
Defining the event 
$$J=\[y_{-N},L_N^+;L_N^+\cup H_N\setminus (B\cup L_N)\]\cap \[y_{N},L_{-N}^+;L_{-N}^+\cup H_N\setminus (B\cup L_{-N})\],$$
we conclude from (A3) that 
$\P\(J\)\ge q^2/16$.
Next, we note that $J\subset \[y_{-N},y_N;H_N\setminus B\]$. Indeed, denote by $\Gamma$ a path connecting $y_{-N}$ to $L_N^+$. Then $y_N$ and $L_{-N}^+$ are contained in different connected components of $H_N\setminus \Gamma$ and any path $\Gamma^\p$ connecting $y_N$ and $L_{-N}^+$ intersects $\Gamma$. Thus,
$$\P\(\[y_{-N},y_N;H_N\setminus B\]\)\ge q^2/16.$$

Moreover, we note that a path from $y_{-N}$ to $y_N$ in $H_N\setminus B$ partitions $H_N$ into two connected components with the property that \fa sufficiently large $n\ge1$ the sets $[n,\infty)\times[-N,N]$ and $[-n,\infty)\times[-N,N]$ are contained in different connected components. We denote by $J^+$ the event that there exists a black path $\Gamma$ in $H_N\setminus B$ that connects $L_N$ and $L_{-N}$ and \st $B$ and $[-n,\infty)\times[-N,N]$ are contained in different connected components of $H_N\setminus \Gamma$ \fa sufficiently large $n\ge1$. The event $J^-$ is defined similarly. 

From $\P\(J^+\)=\P\(J^-\)\ge q^2/32$
we  obtain
$\P\(J^+\cap J^-\)\ge q^4/1024,$
and, moreover, $J^+\cap J^-$ implies that there exist black paths $\Gamma,\Gamma^\p\subset H_N\setminus B$ \st $B$ is contained in a bounded connected component of $\R^2\setminus \(\Gamma\cup\Gamma^\p\)$. \Ip, an application of Lemma~\ref{boxLem} completes the proof.

\subsubsection{Second case: no percolation in the upper half-plane}
In the second part of the proof we consider the case where percolation does not occur in the upper half-plane, i.e., $\P\([o,\infty;H^+]\)=0.$
We make use of the notion of the \emph{winding number} $i(\Gamma,z)$ of a path $\Gamma\subset\R^2$ around a point $z\in\R^2\setminus \Gamma$. Intuitively it is defined as $1/(2\pi)$ times the total change of angle of the vector $z^\p-z$ as $z^\p$ moves from the starting point of $\Gamma$ to its end point. We refer the reader to~\cite[Chapter 7]{beardon} for a precise definition and elementary properties. 

%
%

In the following we denote by $q$ the probability that the origin is contained in an unbounded black connected component. Due to assumption (A4) we have $q>0$. As before let $m\ge1$ be arbitrary and put $B=Q_m(o)$. Applying Corollary~\ref{multErgCor} with $z=o$, $U=V=B$ and $W=\R^2$, we see that \te arbitrarily large $R\ge1$ \st 
$$\P\(\[x_{2R},\infty;\R^2\setminus \(S^RB\cup S^{3R}B\)\]\)=\P\(\[o,\infty;\R^2\setminus \(S^{-R}B\cup S^{R}B\)\]\)\ge q/2.$$
Similarly, putting $I=[0,4R]\times\{0\}$ and applying Corollary~\ref{multErgCor} with $z=x_{2R}$, $U=V=I$ and $W=\R^2\setminus \(S^RB\cup S^{3R}B\)$ yields 
$$\P\(\[x_{2R},\infty;K_{2R}\]\)\ge q/4,$$
for arbitrarily large $M\ge1$, where we write
$$K_{2R}=\R^2\setminus \(S^RB\cup S^{3R}B\cup S^MI\cup S^{-M}I\).$$
Putting $K=\R^2\setminus \(S^RB\cup S^{-R}B\)$, we also note that the assumption $\P\([o,\infty;H^+]\)=0$ implies that every unbounded black path starting from the origin must intersect the union $S^ML_0^+\cup S^{-M}L_0^-$ infinitely often. Hence, we obtain
\begin{align}
\label{secUniqEq1}
\P\(\[o,S^ML_0^+;K\setminus S^{-M}L_0^-\]\)\ge q/4,
\end{align}
and 
$$\P\(\[x_{2R},S^{-M}L_0^-;K_{2R}\setminus S^{M+4R}L_0^+\]\)\ge q/8.$$
Next, for $\sigma\in\{+,-\}$ denote by $A^\sigma$ the event that there exists a black path $\Gamma$ in $K\setminus S^{-M}L_0^-$ that starts in $o$, ends in $S^ML_0^+$, and satisfies $\sigma\cdot i\(\Gamma, x_{2R}\)>0$. Using
$$A^+\cup A^-=\[o,S^ML_0^+;K\setminus S^{-M}L_0^-\],$$
we conclude from~\eqref{secUniqEq1} that $\mu\(A^+\)=\mu\(A^-\)\ge q/8$. \Ip, 
$$\P\(A^+\cap A^-\cap\[x_{2R},S^{-M}L_0^-;K_{2R}\setminus S^{M+4R}L_0^+\]\)\ge q^3/512.$$
To conclude the proof, it suffices to prove that 
\begin{align}
\label{secUniqEq2}
A^+\cap A^-\cap\[x_{2R},S^{-M}L_0^-;K_{2R}\setminus S^{M+4R}L_0^+\]\subset \[o,x_{2R};\R^2\setminus S^RB\].
\end{align}
Indeed, then similar to the arguments at the end of the previous subsection we can show that the probability that $B$ is surrounded by a closed black path is at least $\(q^3/1024\)^2$. Hence, Lemma~\ref{boxLem} implies the claim.

To prove~\eqref{secUniqEq2} we note that the intersection on the left hand side of~\eqref{secUniqEq2} implies the existence of black paths $\Gamma_+,\Gamma_-\subset \R^2\setminus \(S^RB\cup S^{-M}L_0^-\)$ and $\wt{\Gamma}\subset\R^2\setminus \(S^RB\cup S^ML_0^+\)$ with the following properties:
\begin{enumerate}
\item $\Gamma_+$ and $\Gamma_-$ begin at $o$ and end in $S^ML_0^+$;
\item $i\(\Gamma_+,x_{2R}\)>0$  and $i\(\Gamma_-,x_{2R}\)<0$;
\item $\wt{\Gamma}$ begins at $x_{2R}$ and ends in $S^{-M}L_0^-$. 
\end{enumerate}
\Ip, we can define a (not-necessarily black) closed path $\overline{\Gamma}$ by first using $\Gamma_+$ to get from $o$ to $S^ML_0^+$, then moving along $S^ML_0^+$ to the endpoint of $\Gamma_-$ and finally returning to $o$ by traversing $\Gamma_-$ in the reverse direction. Additivity of the winding number then implies
$$i\(\overline{\Gamma},x_{2R}\)=i\(\Gamma_+,x_{2R}\)-i\(\Gamma_-,x_{2R}\)\ge1,$$
where we also used that the winding number of a closed path is an integer.
Hence, $x_{2R}$ is contained in a bounded connected component of $\R^2\setminus \overline{\Gamma}$, while $S^{-M}L_0^-$ is contained in the unbounded connected component of $\R^2\setminus \overline{\Gamma}$. Since the path $\wt{\Gamma}\subset \R^2\setminus\(S^RB\cup S^{-M}L_0^+\)$ connects $x_{2R}$ and $S^{-M}L_0^-$, it must intersect $\overline{\Gamma}\setminus S^{-M}L_0^+\subset \Gamma_+\cup \Gamma_-$. \Ip, $x_{2R}$ and $o$ can be connected by a black path in $\R^2\setminus S^RB$, \sot the event $\[o,x_{2R};\R^2\setminus S^RB\]$ occurs.

\subsection{Zhang's theorem}
Zhang's famous proof of $\theta(1/2)=0$ for Bernoulli bond percolation on $\Z^2$ is based on rather general arguments and can be generalized to many two-dimensional percolation models with a more complex dependency structure. We provide an explicit proof for the confetti percolation model, closely following the exposition in~\cite{Grim99}.
\begin{proposition}
Let $A\subset\R^2$ denote a fixed leaf as described in Section~\ref{defSec}. Then $\theta(1/2,A)=0$.
\end{proposition}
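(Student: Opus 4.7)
The plan is to follow Zhang's classical contradiction argument, which at $p=1/2$ pits the putative coexistence of infinite black and white clusters against planar topology. Suppose for contradiction that $\theta(1/2,A)>0$. Combined with (A1), this forces the almost sure existence of an unbounded $\psi$-black component, which by Proposition~\ref{uniqProp} is almost surely unique. Since the color swap $\sigma_n\mapsto-\sigma_n$ preserves the law of the marked Poisson process at $p=1/2$, the same conclusion holds with colors interchanged: almost surely there is a unique unbounded white component as well.

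Next, for $R\ge 1$ I would set $B_R=Q_{2R}(o)$ and, for each side $s\in\{N,E,S,W\}$ of $\partial B_R$, introduce $F^b_{R,s}=[s,\infty;\R^2\setminus\mathrm{int}(B_R)]$ (black-increasing) and the analogous event $F^w_{R,s}$ with colors swapped (black-decreasing). The union $\bigcup_sF^b_{R,s}$ contains the event that the unique infinite black component meets $B_R$, whose probability tends to $1$ as $R\to\infty$. The $\pi/2$-symmetry of (A1) makes $p_R:=\P(F^b_{R,s})$ independent of $s$, and applying Harris's inequality (Lemma~\ref{harrisLem}) to the four black-decreasing complements gives
$$(1-p_R)^4\le\P\bigl(\bigcap_s(F^b_{R,s})^c\bigr)=1-\P\bigl(\bigcup_sF^b_{R,s}\bigr)\xrightarrow{R\to\infty}0,$$
so $p_R\to 1$. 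Color symmetry at $p=1/2$ further yields $\P(F^w_{R,s})=p_R$ for every $s$, and then the elementary bound $\P(E_1\cap\cdots\cap E_k)\ge\sum_i\P(E_i)-(k-1)$ gives
$$\P(F^b_{R,N}\cap F^b_{R,S}\cap F^w_{R,E}\cap F^w_{R,W})\ge 4p_R-3>0$$
for $R$ large.

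On this event I would select simple piecewise-linear rational witnesses $P_N,P_S$ (black, from the N and S sides of $\partial B_R$ to infinity in $\R^2\setminus\mathrm{int}(B_R)$) and $P_E,P_W$ (white, from E and W), invoking the preliminary remark of Section~\ref{harris} that such discrete witnesses suffice. Uniqueness of the infinite black cluster then lets me join $P_N$ and $P_S$ by a black path $\Gamma^b\subset\R^2$, and uniqueness of the infinite white cluster lets me join $P_E$ and $P_W$ by a white path $\Gamma^w\subset\R^2$; \emph{a priori} both $\Gamma^b$ and $\Gamma^w$ may re-enter $B_R$.

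The conclusion then follows from a topological argument, which I expect to be the main obstacle. In $S^2=\R^2\cup\{\infty\}$ I would form the closed curves
$$C^b=P_N\cup\Gamma^b\cup P_S\cup\{\infty\},\qquad C^w=P_E\cup\Gamma^w\cup P_W\cup\{\infty\},$$
which satisfy $C^b\cap C^w\subseteq\{\infty\}$ because almost surely every point of $\R^2$ has a unique color. In a stereographic chart around $\infty$, the two branches of $C^b$ arrive along the N/S asymptotic directions while those of $C^w$ arrive along E/W, so $C^b$ and $C^w$ cross transversely there. If (after suitable trimming) $C^b,C^w$ are genuine simple closed curves, then $C^w$ separates $S^2$ into two open disks $U_1,U_2$, and the transverse crossing at $\infty$ places the two branches of $C^b$ near $\infty$ in different $U_i$; connectedness of $C^b\setminus\{\infty\}$ then forces $C^b$ to meet $C^w$ at a second point of $\R^2$---a point that would be simultaneously $\psi$-black and $\psi$-white, which has probability zero. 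The bookkeeping of trimming the witnesses and formalizing the transverse crossing at $\infty$ are where the continuous setting demands more care than the discrete Zhang argument, while the probabilistic inputs (Harris, color symmetry, uniqueness) are already in hand.
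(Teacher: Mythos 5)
Your proposal follows the same Zhang-type blueprint as the paper: assume $\theta(1/2,A)>0$, deduce almost sure uniqueness of the unbounded black component (Proposition~\ref{uniqProp}) and, by color symmetry at $p=1/2$, of the unbounded white component; use Harris's inequality (Lemma~\ref{harrisLem}) on the four one-arm complements to push the single-side arm probability to $1$; take a union/Bonferroni bound to get four arms of alternating colors simultaneously with positive probability; and finally derive a planar contradiction. The paper implements the last step differently: it joins the two black arms inside the unique black cluster $C$ and invokes planarity to conclude the two white arms lie in \emph{different} components of $\R^2\setminus C$, which contradicts uniqueness of the white cluster directly, without any excursion to the sphere at infinity.

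The one genuine gap is in your topological endgame. The sentence ``the two branches of $C^b$ arrive along the N/S asymptotic directions while those of $C^w$ arrive along E/W, so $C^b$ and $C^w$ cross transversely there'' is not a correct justification: an arm witnessing $[s,\infty;\R^2\setminus\mathrm{int}(B_R)]$ has no asymptotic direction -- it may wind around $B_R$ arbitrarily before escaping. The correct statement is instead a cyclic-ordering fact: on the disk $S^2\setminus\mathrm{int}(B_R)$, the four (trimmed) arms $P_N,P_E,P_S,P_W$ are pairwise disjoint arcs from $\partial B_R$ to the interior point $\infty$, and disjoint arcs from the boundary circle to a common interior point preserve cyclic order. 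Concretely, the arc $P_N\cup\{\infty\}\cup P_S$ is a simple cross-cut of the disk separating $p_E$ from $p_W$ on $\partial B_R$, and since $P_E\setminus\{\infty\}$ and $P_W\setminus\{\infty\}$ avoid that cross-cut they must lie in opposite components of the disk minus the cross-cut -- this is what gives the alternating local order at $\infty$, not any geometric direction. With this fix (plus the trimming and simplicity issues you already flag) your argument goes through and is a legitimate variant of the paper's; without it the claim of a transverse crossing at $\infty$ is unsubstantiated, and the paper's route via uniqueness of the white cluster avoids the point-at-infinity analysis altogether.
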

\begin{proof}
Suppose the contrary, i.e., that $\theta(1/2,A)>0$.
First, for any $n\ge1$ we denote by $A^l(n)$ the event that the left vertical boundary segment of the square $Q_n(o)$ intersects an unbounded black connected component. By $A^b(n)$, $A^r(n)$ and $A^t(n)$ we denote the analogous events corresponding to the bottom, right and top side of $Q_n(o)$, respectively. From ergodicity and our assumption $\theta(1/2,A)>0$, we conclude that there exists an unbounded black connected component with probability $1$. \Ip, 
$$\lim_{n\to\infty}\P\(A^l(n)\cup A^b(n)\cup A^r(n)\cup A^t(n)\)=1.$$

As the events $A^u(n)$, $u\in \{l,b,r,t\}$ are black-decreasing, we conclude from Lemma~\ref{harrisLem} that
\begin{align*}
\P\(\(A^l(n)\cup A^b(n)\cup A^r(n)\cup A^t(n)\)^c\)&\ge\P\(\(A^l(n)\)^c\)^4,
\end{align*}
\sot also $\lim_{n\to\infty}\P\(A^l(n)\)=1$. Therefore, we can choose $n_0\ge1$ with $\P\(A^l(n_0)\)>7/8.$
As $p=1/2$ the latter inequality is equivalent to 
$\P\(A^l_w(n_0)\)>7/8,$
where for $u\in\{l,b,r,t\}$ the event $A^u_w(n_0)$ is defined analogously to the event $A^u(n_0)$, just replacing ``black connected component'' by ``white connected component''. Next, consider the event 
$$A=A^l(n_0)\cap A^r(n_0)\cap A^t_w(n_0)\cap A^b_w(n_0),$$
and note that 
\begin{align}
\label{zhangEq1}
\P\(B^c\)\le \P\(A^l(n_0)^c\)+\P\(A^r(n_0)^c\)+\P\(A^t_w(n_0)^c\)+\P\(A^b_w(n_0)^c\)<1/2.
\end{align}

On the event $B$ there exist at least two unbounded black connected components and two unbounded white connected components in $\R^2\setminus Q_{n_0}(o)$. Both of the unbounded black connected components in $\R^2\setminus Q_{n_0}(o)$ are contained in the unique unbounded connected component $C$ of black points in $\R^2$. Planarity implies that the two unbounded white connected components of $\R^2\setminus Q_{n_0}(o)$ whose existence is guaranteed by $B$ are contained in different connected components of $\R^2\setminus C$. Hence~\eqref{zhangEq1} contradicts Proposition~\ref{uniqProp} that ensures the existence of a unique white connected component with probability $1$.
\end{proof}

\section{An RSW theorem for confetti percolation}
\label{unifBound}
\subsection{Statement of the result}
Again let $A\subset\R^2$ denote a fixed leaf as described in Section~\ref{defSec}. The first step in the proof of $p_c\leq 1/2$ is to check that a general RSW-type theorem as developed by Bollob\'as and Riordan for Voronoi percolation in~\cite{bbVoronoi} (see also \cite[Section 5]{bbSharp}) is true for confetti percolation, too. For $R\subset\R^2$ a rectangle we denote by $H(R)$ the event that there exists a black horizontal crossing of $R$, i.e., a black path in $R$ connecting the left vertical boundary of $R$ to the right vertical boundary of $R$. Note that for confetti percolation it is no restriction of generality to consider only piecewise linear paths. This follows from the regularity assumption on $A$ and the observation that topologically open, connected sets are polygonally connected (see~\cite[Proposition 1.7]{bakNew}). The goal of this section is to prove the following result.
\begin{proposition}
\label{bbRSW}
Let $\rho>1$ be arbitrary and suppose that there exists $c>0$ such that $\P(H([0,s]\times[0,s]))\geq c$ holds for all sufficiently large $s>0$. Then there exists $c^\prime>0$ such that for all $s_0>0$ we can find $s\geq s_0$ with $\P(H([0,\rho s]\times[0,s]))\geq c^\prime$.
\end{proposition}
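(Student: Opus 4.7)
The strategy is to invoke the general RSW-type theorem of Bollob\'as and Riordan from \cite{bbVoronoi} rather than re-prove it in full. That abstract theorem produces rectangular crossing bounds from square crossing bounds given three structural features, each of which I would check in the confetti setting: (i) positive correlation of black-increasing events, which is exactly Lemma~\ref{harrisLem}; (ii) full $\pi/2$-rotation and reflection symmetry of the coloring $\psi_X$, which follows from the symmetry assumptions on the fixed leaf $A$ together with stationarity of the marked Poisson process; and (iii) a finite-range independence property, which in the confetti setting is much cleaner than in the Voronoi case, since, because $A$ is compact, the restrictions of $\psi_X$ to two regions separated by more than $\diam(A)$ are determined by disjoint, hence independent, portions of the Poisson process.

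With these three inputs, the Bollob\'as--Riordan abstract gluing argument goes through. Conceptually it proceeds in two steps: first, symmetry plus Harris gives a lower bound on the probability that $[0,s]^2$ admits a horizontal black crossing hitting the upper half of its right edge; second, by reflecting across the midline one obtains a complementary crossing hitting the lower half, and gluing the two via a vertical crossing of a narrow middle strip through Harris produces a horizontal crossing of a rectangle of aspect ratio close to~$2$. Iterating $O(\log \rho)$ times reaches any desired $\rho > 1$.

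The main obstacle---and the reason this whole section is devoted to the issue---is phrasing the reflection step in a form compatible with the continuous geometry of the confetti model. In Bernoulli percolation one reflects a specific dual lattice path, whereas here the ``lowest crossing'' is a continuum object determined by uncountably many leaves and must be constructed by an explicit exploration procedure that reveals only the leaves whose centers lie on one side of the explored interface. Handling leaves that straddle the interface requires inserting buffer zones of width $O(\diam(A)) = O(1)$, which are negligible at scales $s \to \infty$ but must be carefully tracked in order to apply Harris to truly disjoint Poisson sub-processes. The final conclusion holding for arbitrarily large $s$ (rather than all large $s$) is natural to this iterative scheme, since the doubling procedure produces the required bound along a subsequence of scales.
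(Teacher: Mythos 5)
Your proposal correctly identifies three of the ingredients the paper lists for the Bollob\'as--Riordan RSW scheme --- positive correlation (B1), lattice symmetries (B2), and finite-range/asymptotic independence (B3) --- and you are right that these are essentially free in the confetti model (indeed (B3) is even easier than in the Voronoi case, as you note). But the proposal omits the fourth and crucial hypothesis, labelled (B4) in the paper: for every fixed rectangle $R$ there exists $C>0$ such that, as $s\to\infty$, the probability that $H(sR)$ holds yet every black horizontal crossing of $sR$ has length at least $Cs^{5/2}$ tends to $0$. The paper explicitly says this item ``is not clear \emph{a priori}'' and devotes the remainder of the subsection to establishing it, by giving a more economical construction of the dead-leaves process on $R^{\mathrm{ext}}\times[0,c\log s]$ and showing that whp $R$ is already fully covered by leaves arriving before time $c\log s$, so that the number of visible leaves meeting $R$ --- and hence the perimeter bound controlling the shortest crossing --- is $O(s^{5/2})$. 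Without (B4) the abstract theorem does not apply, so this is a genuine gap rather than a detail.

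Moreover, your sketch of how the abstract theorem works --- reflecting the lowest crossing across a midline and gluing via Harris, iterating $O(\log\rho)$ times --- describes the classical Russo/Seymour--Welsh argument for independent percolation, not the Bollob\'as--Riordan one. The BB argument that the paper actually carries out in Section~\ref{unifBound} is a proof by contradiction: assuming $f(\rho,s)=\P(H([0,\rho s]\times[0,s]))\to0$, one deduces $f(1+\varepsilon,s)\to0$ for all $\varepsilon>0$, then uses Claims~\ref{bbClaim1} and~\ref{bbClaim2} to show that whp any black horizontal crossing of an $(s\times2s)$-rectangle contains $16$ disjoint subcrossings of slightly smaller rectangles, and thereby derives the recursion $g(s)\geq16g(0.47s)$ for a quantile $g$ of the shortest-crossing length. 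Iterating this recursion forces $g(s)>s^3$ along a subsequence, which contradicts (B4). Your ``main obstacle'' paragraph about explicit exploration of a lowest crossing and buffer zones of width $O(\operatorname{diam}A)$ therefore does not correspond to any step that is actually needed; the real obstacle is establishing the polynomial bound on the number of visible leaves, which your proposal does not address.
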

\begin{remark}
Note that with probability $1$, either $[0,s]\times[0,s]$ contains a black horizontal crossing or a white vertical crossing. \Ip, if $p=1/2$ then invariance under rotation by $\pi/2$ and under the switching of colors shows that $\P\(H([0,s]\times[0,s])\)=1/2$.
\end{remark}
\begin{remark}
Putting $n=\lceil s\rceil$, the relation $[0,\rho n]\subset [0,2\rho s]$ holds for all sufficiently large $s\ge1$, so that Proposition~\ref{bbRSW} also yields the existence of arbitrarily large integers $s\ge1$ with $\P(H([0,\rho s]\times[0,s]))\geq c^\prime$.
\end{remark}
As explained in~\cite[Section 5]{bbSharp} the proof of Proposition~\ref{bbRSW} depends on the following properties of the percolation model:
\begin{enumerate}
\item[(B1)] Black-increasing events are positively correlated (we have seen this in Section~\ref{harris}).
\item[(B2)] The model has the symmetries of $\Z^2$ (true, since our leaves have this property).
\item[(B3)] Disjoint regions are asymptotically independent (in fact for the dead leaves model the range of dependence is finite)
\item[(B4)] For every fixed rectangle $R$ there exists $C>0$ such that the probability that $H(sR)$ holds but every black horizontal crossing of $sR$ has length at least $Cs^{5/2}$ tends to $0$ as $s\to\infty$ (this is not clear \emph{a priori} -- the remainder of this subsection is devoted to this item).
\end{enumerate}
To prove useful asymptotics for the length of a shortest black horizontal crossing we observe that the length of such a crossing is bounded above by the perimeter of the black connected component in which it is contained. Since the boundary of this component is a subset of the union of the boundary of $R$ and the union of the boundaries of (partly) visible leaves intersecting $R$, it suffices to bound the latter.

Choose $r_1,r_2>0$ \st $Q_{r_1}(o)\subset A \subset Q_{r_2}(o)$. The first step is to give a more economical construction of the dead leaves process on a rectangle of the form $R=[0,as]\times[0,bs]$ for $a,b,s>0$. Define $R^{\text{ext}}=[-r_2,as+r_2]\times[-r_2,bs+r_2]$ and denote by $c$ a positive constant to be determined in the following paragraph. \Fm, let $X=\{(y_n,\sigma_n)\}_{1\leq n\leq N}\subset R^{\text{ext}}\times[0,c\log s]\times\{\pm1\}$ be an independently $\{\pm1\}$-marked homogeneous Poisson point process on $R^{\text{ext}}\times[0,c\log s]$ with intensity $1$ whose leaves have constant shape $A$ and satisfy $\P(\sigma_n=1)=p$. Furthermore, denote by $\pi_1: R^{\text{ext}}\times[0,c\log s]\times\{\pm1\}\to R^{\text{ext}}$ the projection onto the first coordinate. Then $\pi_1(X)\subset R^{\text{ext}}$ is a homogeneous Poisson point process on $R^{\text{ext}}$ with intensity $\lambda=c\log s$. Note that to obtain from $X$ a realization of the dead leaves process on the finite rectangle $R$ it suffices to simulate a further $\{\pm1\}$-marked homogeneous Poisson process $X^\prime \subset R^{\text{ext}}\times[c\log s,\infty)\times\{\pm1\}$ and consider the coloring of $R$ induced by the superposition of $X$ and $X^\prime$.

Denote by $B^{}_s$ the event $R\subset\bigcup_{z\in \pi_1(X)}\{z+A\}$ and observe that if $B^{}_s$ occurs then the process $X^\prime$ is not needed to determine the coloring of $R$ induced by $X\cup X^\prime$. We claim that $\P\left(B^{}_s\right)\to1$ as $s\to\infty$. Using the notation $M=\{v\in (r_1/4)\Z^2: \(v+[0,r_1/4]^2\)\cap R\neq \es\}$, we see that $B^{}_s$ is implied by the occurrence of the event $\#(\pi_1(X)\cap (v+[0,r_1/4]^2))\geq1$ for all $v\in M$. However, the probability of its complement is at most ${\left|M\right|}\exp(-\lambda r_1^2/16)\leq {16r_1^{-2}(as+2r_2)(bs+2r_2)}\exp(-\lambda r_1^2/16)$. Choosing a suitable $c$ for which $s^2\exp(-\lambda r_1^2/16)\in O(s^{-1})$ (e.g., for $A=Q(o)$ we choose $c=50$), we see that this expression tends to $0$ as $s\to\infty$. In particular, we conclude that the number of visible squares intersecting $R$ is of order at most $O(s^{5/2})$ whp, thereby verifying the final assumption (B4).

\subsection{Proof of Proposition~\ref{bbRSW}}
For the proof of Proposition~\ref{bbRSW} we closely follow the presentations in~\cite{bbVoronoi} and~\cite{bbBook}, only accomodating the arguments to the case of finite range of dependence. In the present subsection, for $\rho,s>0$ it is convenient to write $f(\rho,s)=\P(H([0,\rho s]\times[0,s]))$. We assume for a contradiction that Proposition~\ref{bbRSW} was false, i.e., that \tes $\rho>1$ with
$\lim_{s\to\infty}f\(\rho,s\)=0.$
\Ip,
\begin{align}
\label{bbRswEq2}
\lim_{s\to\infty}f\(1+\varepsilon,s\)=0,
\end{align}
\fa $\varepsilon>0$. Indeed, as explained in~\cite[Chapter 8.3]{bbBook} positive correlation of black-increasing events yields
\begin{align}
\label{bbRswEq3}
f(a_1+a_2-1,s)&\ge f(a_1,s)f(a_2,s)f(1,s),
\end{align}
for all $a_1,a_2>1$, \sot choosing $k\ge (\rho-1)/\varepsilon$ and applying~\eqref{bbRswEq3} several times ($k-1$ times, to be more precise) implies
$$f(\rho,s)\ge f(1+k\varepsilon,s)\ge f(1+\varepsilon,s)f(1+(k-1)\varepsilon,s)f(1,s)\ge\(f(1+\varepsilon,s)f(1,s)\)^{k-1}f(1+\varepsilon,s).$$
Since $f(\rho,s)$ tends to $0$ as $s\to\infty$, so does $f(1+\varepsilon,s)$.

We note that~\eqref{bbRswEq2} has important implications for the shape of black horizontal crossings in a square. Indeed, for any $\varepsilon>0$, whp all horizontal black crossings of a square with side length $s$ must pass within distance $\varepsilon s$ of the top and bottom sides. Otherwise, we would obtain a black horizontal crossing in an $(s\times (1-\varepsilon)s)$-rectangle. 

In order to illustrate the strength of~\eqref{bbRswEq2}, we consider the following result, see~\cite[Claim 4.3]{bbVoronoi}.
\begin{claim}
\label{bbClaim1}
Let $\varepsilon>0$ be arbitrary and assume that~\eqref{bbRswEq2} holds. Then whp no black path in $[0,s]\times \R$ that starts in $\{0\}\times [-\varepsilon s,\varepsilon s]$ leaves $[0,s]\times [-s/2+2\varepsilon s,s/2+2\varepsilon s]$.
\end{claim}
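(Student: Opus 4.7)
The plan is to argue by contradiction, adapting the symmetry-and-planarity strategy of \cite{bbVoronoi}. Suppose along a subsequence $s \to \infty$ the event $B_s$ of the claim has probability at least $c_0 > 0$. Since a witnessing path is confined to the strip $[0,s]\times\R$, it can only leave the exit box $E_s := [0,s]\times[-s/2+2\varepsilon s,\,s/2+2\varepsilon s]$ through its top or its bottom edge; writing $B_s^+$ and $B_s^-$ for these sub-events, at least one --- say $B_s^+$ --- has probability $\ge c_0/2$.

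Next I would partition $B_s^+$ according to whether the first hit of the top line $\{y = s/2+2\varepsilon s\}$ by the witnessing path lies in $[0,s/2]$ or in $(s/2,s]$; at least one sub-event, say $B_{s,R}^+$ (hit in $(s/2,s]$), has probability $\ge c_0/4$. The left-right reflection through $x = s/2$ produces the mirror event $\tilde B_{s,L}^+$ in which the path starts in $\{s\}\times[-\varepsilon s,\varepsilon s]$ and first hits the top line in $[0,s/2)$; it has the same probability. Since both events are black-increasing, Lemma~\ref{harrisLem} yields $\P(B_{s,R}^+\cap\tilde B_{s,L}^+)\ge (c_0/4)^2$.

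On this intersection, let $\Gamma,\tilde\Gamma$ be the witnessing paths truncated at their first hit of height $s/2+2\varepsilon s$. A planarity argument forces them to meet: $\Gamma$ is a simple arc in the half-strip $[0,s]\times(-\infty,\,s/2+2\varepsilon s]$ with endpoints $(0,y_1)$ on the left edge and $(x_1,s/2+2\varepsilon s)$ on the top edge with $x_1 > s/2$, and by the Jordan curve theorem it separates the half-strip into two connected pieces. The endpoints $(s,\tilde y_1)$ on the right edge and $(\tilde x_1, s/2+2\varepsilon s)$ with $\tilde x_1 < s/2 < x_1$ on the top edge of $\tilde\Gamma$ lie in opposite pieces, so $\tilde\Gamma$ must cross $\Gamma$. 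Hence $\Gamma\cup\tilde\Gamma$ contains a single black path $\Pi$ from $(0,y_1)$ to $(s,\tilde y_1)$ with $|y_1|,|\tilde y_1| \le \varepsilon s$.

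Finally, let $\mathcal{D}_s$ be the event that both $\Gamma$ and $\tilde\Gamma$ stay above $y = -\varepsilon s$. On $B_{s,R}^+ \cap \tilde B_{s,L}^+ \cap \mathcal{D}_s$ the path $\Pi$ lies in $[0,s]\times[-\varepsilon s,\,s/2+2\varepsilon s]$, yielding a horizontal black crossing of a rectangle of width $s$ and height $s/2+3\varepsilon s$ --- aspect ratio $2/(1+6\varepsilon) > 1+\varepsilon$ for $\varepsilon$ small enough --- contradicting \eqref{bbRswEq2}. If instead $\mathcal{D}_s$ has vanishing conditional probability, then with positive probability one of $\Gamma,\tilde\Gamma$ dips substantially below $-\varepsilon s$, witnessing an analogous downward event; a symmetric planarity argument in the downward direction then produces, with positive probability, a vertical black crossing of a rectangle of width $s$ and height $(1+\varepsilon)s$, again contradicting \eqref{bbRswEq2} via the rotation symmetry (B2). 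The hard part will be this last case --- controlling the downward excursions while keeping joint probabilities positive --- which requires the careful book-keeping of \cite{bbVoronoi} and adapts to our setting thanks to the finite range of dependence of confetti percolation.
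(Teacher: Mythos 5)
Your reduction to the top-exit event $B_s^+$ and the use of Harris's inequality are fine, but the \emph{direction} of your reflection is where the argument breaks. You reflect left--right across $x=s/2$, which, after the planarity step, yields a horizontal black crossing $\Pi$ of a rectangle $[0,s]\times[y_{\min},s/2+2\varepsilon s]$. For this to contradict~\eqref{bbRswEq2} the rectangle must be \emph{wider than tall} by a fixed factor, so you are forced to certify that the paths do not dip far below the starting level; your event $\mathcal{D}_s$ (``both stay above $-\varepsilon s$'') does exactly that. The trouble is that the complement of $\mathcal{D}_s$ gives you almost nothing: a path that reaches the top at height $s/2+2\varepsilon s$ and dips slightly below $-\varepsilon s$ has vertical extent only about $s/2+3\varepsilon s<s$, which does \emph{not} produce the $(1+\varepsilon)s$-tall vertical crossing you claim at the end. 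In fact no choice of threshold $y_0$ can repair this: to make $[0,s]\times[y_0,s/2+2\varepsilon s]$ wide enough you need $y_0>-s/2+2\varepsilon s$, while to make ``dips below $y_0$'' yield a $(1+\varepsilon)s$-tall vertical crossing you need $y_0\le -s/2+\varepsilon s$, and these are incompatible. So the final paragraph is a genuine gap, not merely a deferred technicality.

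The paper avoids this tension by reflecting \emph{vertically} rather than horizontally. Fix the rectangle $R=[0,s]\times[-s/2,\,s/2+2\varepsilon s]$, of height $(1+2\varepsilon)s$. If the witnessing path (which starts in $\{0\}\times[-\varepsilon s,\varepsilon s]$ and touches the top of $R$) ever leaves $R$, it must exit through the bottom $y=-s/2$, and then it is \emph{already} a vertical crossing of $R$, hence a black crossing of a $(1+2\varepsilon)s\times s$ rectangle in the long direction, contradicting~\eqref{bbRswEq2}. Otherwise the path stays in $R$; call this event $E_1$. Its reflection $E_2$ across a horizontal line asks for a black path in $R$ from a slightly shifted interval on the left wall down to the bottom of $R$, with $\P(E_2)=\P(E_1)$. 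Both are black-increasing, and on $E_1\cap E_2$ the two paths must intersect (planarity), producing a path from the top of $R$ to the bottom of $R$. That is again a vertical crossing of $R$, i.e.\ a crossing of a $(1+2\varepsilon)s\times s$ rectangle in the long direction, and its probability is small by~\eqref{bbRswEq2}. Note the key advantage: a vertical crossing of $R$ needs no control on the horizontal position, so there is no analogue of your $\mathcal{D}_s$ and no leftover case. You should replace your horizontal reflection by this vertical one; the rest of your outline (symmetry reduction, Harris, planarity) then goes through unchanged.
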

\begin{proof}
Denoting by $E$ the event that \tes a black path $\Gamma$ in $S^\p=[0,s]\times [-s/2+2\varepsilon s,s/2+2\varepsilon s]$ that starts in $\{0\}\times [-\varepsilon s,\varepsilon s]$ and touches the top side of $S^\p$, it suffices (by symmetry) to show that the complement of $E$ occurs whp.

\Fm, denote by $E_1$ the event that \tes such a path that is contained in the rectangle $R=[0,s]\times[-s/2,s/2+2\varepsilon s]$. Observe that the event $E\setminus E_1$ implies the existence of a black vertical crossing in the rectangle $[0,s]\times [-s/2,s/2+2\varepsilon s]$ and by the observation preceding Claim~\ref{bbClaim1} this is an event whose complement occurs whp. Hence, it suffices to show that $\lim_{s\to\infty}\P\(E_1\)=0$.

Finally, denote by $E_2$ the event that \tes a black path in $R$ starting from an element in $\{0\}\times [\varepsilon s/2,3\varepsilon s/2]$ and ending at $[0,s]\times\{-s/2\}$. By symmetry we note that $\P\(E_1\)=\P\(E_2\)$. As black-increasing events are positively correlated, we see that $\P\(E_1\cap E_2\)\ge \P\(E_1\)^2$. \Fm, if both $E_1$ and $E_2$ occur, then the black paths $\Gamma_1$ and $\Gamma_2$ guaranteed by the events $E_1$ and $E_2$ must intersect and define a black vertical crossing of $[0,s]\times [-s/2,s/2+2\varepsilon s]$. In particular, this yields a contradiction to the discussion preceding the claim.
\end{proof}

Further refinements of such arguments can be used to show that whp every black horizontal crossing of a rectangle contains $16$ disjoint subpaths that define black horizontal crossings of slightly shrunken rectangles. To be more precise, we use the following claim whose proof can be found in~\cite[Claim 4.6]{bbVoronoi}.

\begin{claim}
\label{bbClaim2}
 Fix $C>1/2$ and put $R=R_s=[0,s]\times [-Cs,Cs]$. Moreover, for $j\in\{0,\ldots,4\}$ put $R_j=[js/100,(j+96)s/100]\times [-Cs,Cs]$ and assume that~\eqref{bbRswEq2} holds. Then whp every black horizontal crossing $\Gamma$ of $R$ contains $16$ disjoint subpaths $\Gamma_1,\ldots, \Gamma_{16}$ with the property that for every $i\in\{1,\ldots,16\}$ there exists $j\in\{0,\ldots, 4\}$ \st $\Gamma_i$ crosses $R_j$ horizontally.
\end{claim}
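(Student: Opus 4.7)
My plan is to follow the strategy of Bollob\'as and Riordan: assume for contradiction that whp a black horizontal crossing $\Gamma$ of $R$ fails to contain $16$ disjoint subpaths each crossing some $R_j$ horizontally, and derive a violation of~\eqref{bbRswEq2} by a ``peeling plus reflection'' argument.

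First, I would lift Claim~\ref{bbClaim1} to a family of symmetric variants. By translation invariance, reflection across both coordinate axes, and rotation by $\pi/2$ (all of which are symmetries of the confetti model, since $A$ is assumed to be invariant under them), Claim~\ref{bbClaim1} can be rephrased for black paths started on a short segment of \emph{any} side of any axis-aligned rectangle of the appropriate aspect ratio. The upshot is a general rigidity principle, valid whp: any black path starting at a short segment of length $\varepsilon s$ on a side of such a rectangle cannot reach the opposite side, because concatenating such a path with its reflection would produce a black horizontal crossing of a $(1+\varepsilon)$-rectangle and contradict~\eqref{bbRswEq2}.

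Next I would iteratively extract the subpaths. Take $\Gamma_1$ to be the initial portion of $\Gamma$ from its starting point on $\{0\}\times[-Cs,Cs]$ up to the first time its first coordinate reaches $96s/100$; this is a horizontal crossing of $R_0$. Assume inductively that disjoint subpaths $\Gamma_1,\ldots,\Gamma_k$ with $k<16$ have been extracted, each a horizontal crossing of some $R_{j_i}$. I would argue that whp the complement in $\Gamma$ of $\Gamma_1\cup\cdots\cup\Gamma_k$ still contains a further horizontal crossing of some $R_j$. If it did not, then by the symmetric variants from Step~1 the residual portion of $\Gamma$, concatenated with a reflection across a carefully chosen vertical boundary of some $R_j$ (chosen among the five available shifted boundaries at $x = js/100$ and $x=(j+96)s/100$), would yield a black horizontal crossing of a $(1+\varepsilon)$-rectangle, contradicting~\eqref{bbRswEq2}. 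The five choices $R_0,\ldots,R_4$ give enough combinatorial flexibility to run the extraction 16 times; each run uses up only a fixed positive share of the probability, and 16 iterations keep the cumulative error under control.

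The main obstacle is the inductive step and the reflection construction inside it. One has to identify, at each stage, a portion of the ``remainder'' of $\Gamma$ that is ``nearly monotone'' in the sense that reflecting it across a well-chosen vertical line gives a valid black crossing of a $(1+\varepsilon)$-rectangle of the correct aspect ratio. This requires (i) careful topological bookkeeping using Jordan-curve arguments to guarantee that the reflected configuration actually forms a crossing and that $\Gamma_{k+1}$ is disjoint from $\Gamma_1,\ldots,\Gamma_k$ as a subinterval of the parametrisation of $\Gamma$, and (ii) verifying that the vertical extent of the reflected rectangle is bounded as dictated by Claim~\ref{bbClaim1} (so that its aspect ratio is of the forbidden type $(1+\varepsilon):1$). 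A secondary, but genuinely delicate, point is that the precise numerology of the shifts (widths $96s/100$ and offsets $s/100$) has to be compatible with the 16 iterations — that is what forces the specific constants in the statement.
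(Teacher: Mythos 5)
The paper does not actually prove this claim; it cites \cite[Claim 4.6]{bbVoronoi}. So the benchmark is Bollob\'as and Riordan's own argument, which is a careful combinatorial/topological counting argument. Your outline correctly names the ingredients --- Claim~\ref{bbClaim1} together with its translated, reflected and rotated variants, an iterative extraction of subpaths, and a contradiction with~\eqref{bbRswEq2} --- but the argument itself is not there, and the step you defer is exactly the mathematical content of the claim.

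The inductive assertion ``whp the complement of $\Gamma_1\cup\dots\cup\Gamma_k$ still contains a further horizontal crossing of some $R_j$'' is precisely what has to be shown, and as written nothing forces it. After you remove $\Gamma_1$ (initial arc up to the first hit of $x=0.96s$), the residual starts at $x=0.96s$ and ends at $x=s$; if $\Gamma$ happens to be nearly monotone in the first coordinate, the residual never returns to $x\le 4s/100$, so it crosses no $R_j$ at all. You do not explain why such a $\Gamma$ is excluded: the full $\Gamma$ can have vertical extent up to roughly $s$ (Claim~\ref{bbClaim1} only confines it to a box of height $\approx s$), so $\Gamma$ itself is \emph{not} automatically a crossing of a $(1+\varepsilon)$-aspect rectangle, and the residual arc is too short to furnish such a crossing either. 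Moreover, the ``concatenating a path with its reflection'' device is not available in the form you use it: reflecting a black path produces a path that is black in a \emph{different} (reflected) colouring, not in the one at hand. The way reflection actually enters --- as in the proof of Claim~\ref{bbClaim1} --- is at the level of events, via the symmetry $\P(E_1)=\P(E_2)$ together with Harris to force two random paths to intersect, and this cannot simply be transported to a deterministic concatenation of one realised path with its mirror image.

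Finally, the numerology is not cosmetic. The five overlapping rectangles $R_0,\dots,R_4$ of width $96s/100$ shifted by $s/100$, the constant $16$, and the hypothesis $C>1/2$ all play a specific role in the Bollob\'as--Riordan counting of excursions of $\Gamma$ between the gates at the $x$-coordinates $js/100$ and $(j+96)s/100$, with Claim~\ref{bbClaim1} (after translating/reflecting and a union bound over grid points) confining each excursion vertically; it is $2C>1$ together with the absence of $(1+\varepsilon)$-aspect crossings that forces enough excursions, and the choice of widths and shifts is what makes the disjointness bookkeeping close. Your proposal acknowledges that ``the numerology has to be compatible'' but does not perform this counting, so the number $16$ --- which is the entire statement --- remains unjustified.
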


In the final step, we show that the shape of black paths imposed by the claim contradicts our assumption (B4) on the length of shortest black horizontal crossings. Indeed, for $R_s$ an $(s\times 2s)$-rectangle denote by $L(R_s)$ the length of the shortest black horizontal crossing of $R_s$ provided that such a crossing exists (otherwise, we put $L(R_s)=\infty$). Then, for $\eta>0$ a (small) constant, it is convenient to consider the function 
$$ g(s)=\sup\lcu x\ge0: \P\(L(R_s)<x\)\le \eta\rcu.$$
Note that the probability that $L(R_s)<\infty$, i.e., that \tes a black horizontal crossing in $R_s$ is at least $f(1,s)$, \sot by choosing $\eta>0$ sufficiently small we can guarantee that $g(s)\in(0,\infty)$ \fa sufficiently large $s>0$. 

Our next goal is to show $\P\(L(R_s)<16g(0.47s)\)<\eta$, i.e., 
\begin{align}
\label{bbRsw4}
g(s)\ge 16g(0.47s),
\end{align}
for all sufficiently large $s>0$.
Before we show~\eqref{bbRsw4}, let us explain how this inequality can be used in order to derive at the desired contradiction. Iterating~\eqref{bbRsw4} yields $g((1/0.47)^ns)\ge 16^ng(s)$. In particular, there are arbitrarily large $s>0$ \st $g(s)>s^3$. Hence, when $\eta>0$ is chosen such that $\eta <\liminf_{s\to\infty}\P\(L(R_s)<\infty\)/2$, then $\limsup_{s\to\infty}\P\(L(R_s)\in (s^3,\infty)\)>0$, contradicting assumption (B4).

The first step in proving~\eqref{bbRsw4} consists of establishing the inequality
\begin{align}
\label{bbRsw5}
\P\(L(R^\p_s)<g(0.47s)\)\le 2\cdot 10^4\eta^2
\end{align}
\fa sufficiently large $s>0$, where $R^\p_s=[0,0.96s]\times [-s,s]$. Subdivide both the left vertical boundary and the right vertical boundary of $R^\p_s$ into $100$ line segments $\{L_i\}_{1\le i \le 100}$ and $\{L_i^\p\}_{1\le i \le 100}$, respectively, where each of these segments is of length $0.02s$. For fixed $i,j\in\{1,\ldots,100\}$, we say that a black horizontal crossing $\Gamma$ of $R^\p_s$ is \emph{eligible} if $\Gamma$ starts from a point of $L_i$ and ends in a point of $L_j^\p$. As the two families of  line segments cover the vertical boundaries of $R^\p_s$, it suffices to show $\P\(B_{i,j}\)\le 2\eta^2$, where $B_{i,j}$ denotes the event that there exists an eligible path $\Gamma$ with $\nu_1\(\Gamma\)\le g(0.47s)$. 

Consider an eligible path $\Gamma$ starting from $(0,y_0)$ and ending at $(0.96s,y_1)$. Moreover, denote by $\Gamma_0$ the subpath of $\Gamma$ starting from $(0,y_0)$ and ending at the first point where $\Gamma$ touches the vertical line $x=0.47s$. Then, we conclude from Claim~\ref{bbClaim1} that $\Gamma_0$ is contained in $R_0=[0,0.47s]\times[y-0.47s,y+0.47s]$ whp, where $y$ denotes the midpoint of $L_i$. Similarly, denoting by $\Gamma_1$ the subpath of $\Gamma$ starting from the last point where $\Gamma$ touches the vertical line $x=0.49s$ and ending at $(0.96s,y_1)$, we see that $\Gamma_1$ is contained in $R_1=[0.49s,0.96s]\times[y^\p-0.47s,y^\p+0.47s]$ whp, where $y^\p$ denotes the midpoint of $L_j^\p$.  \Ip, we have 
$L(R_0)+L(R_1)\le L(R^\p_s)$
whp. Due to the finite range of dependence, we conclude that for all sufficiently large $s>0$ 
\begin{align*}
\P\(L(R_0)+L(R_1)<g(0.47s)\)&\le \P\(\max\(L(R_0),L(R_1)\)<g(0.47s)\)\\
&=\P\(L(R_0)<g(0.47s)\)\P\(L(R_1)<g(0.47s)\).
\end{align*}
Since the latter expression is at most $\eta^2$, this completes the proof of~\eqref{bbRsw5}.

Having established these preliminaries, we can now complete the proof of~\eqref{bbRsw4}. Using the notation of Claim~\ref{bbClaim2}, we put 
$R=[0,s]\times [-s,s]$ and $R_j=[js/100,(j+96)s/100]\times [-s,s]$, $j\in\{0,\ldots,4\}$. First, note that~\eqref{bbRsw5} implies
$$\P\(\min_{j\in\{0,\ldots,4\}}L(R_j)\ge g(0.47s)\)\ge 1- 10^5\eta^2.$$
Second, Claim~\ref{bbClaim2} yields $L(R)\ge 16\min_{j\in\{0,\ldots,4\}} L(R_j)$ whp, \sot for all sufficiently large $s>0$, 
$$\P\(L(R)\ge 16g(0.47s)\)\ge 1-10^6\eta^2,$$
and the latter expression is at least $1-\eta$, provided that $\eta$ was chosen sufficiently small. In particular, $16g(0.47s)\le g(s)$, as desired.

\section{$p_c\leq1/2$}
\label{unifSec}
In the following we restrict our attention to square-shaped leaves, i.e., $A=Q(o)$. In this section we show how the RSW-type theorem of Section~\ref{unifBound} in conjunction with a suitable sharp-threshold result can be used to obtain $\theta(p)>0$ for $p>1/2$. As in \cite[Section 8.3]{bbBook}, we see that in order to apply this sharp-threshold result, we need to work with a mesh size $\delta(s)$ of the form $\delta(s)\sim s^{-\gamma}$ for some constant $\gamma>0$. Since $\gamma$ may be quite small, we have to expect the occurrence of discretization defects. 

For simplicity of exposition, we assume from now on that $s>0$ is an integer. We denote by $\T^2=\T^2_{10s}$ the two-dimensional torus obtained by the standard glueing of the boundaries of $[0,10s]^2$. Furthermore, it is no problem to construct a confetti process on the torus in the same way as we did for the case of $\R^2$. Even the more economical construction of the process can be transferred. Indeed, write 
$$\lambda=\lambda(s)=50\lfloor \log s \rfloor$$
 and let $X\subset \T^2\times[0,\lambda]\times\{\pm1\}$ be an independently $\{\pm1\}$-marked homogeneous Poisson point process with intensity $1$ whose leaves have constant shape $A=Q(o)$ and satisfy $\P(\sigma_n=1)=p$. 

Note that to obtain from $X$ a realization of the confetti process on $\T^2$ it suffices to independently simulate a further $\{\pm1\}$-marked homogeneous Poisson process $X^\prime\subset\T^2\times[\lambda,\infty)\times\{\pm1\}$ and consider the coloring of $\T^2$ induced by the superposition of $X$ and $X^\prime$. We denote by $C^{(1)}_s$ the event $\T^2\subset\bigcup_{(z,t,\sigma)\in X}{Q_{1/2}(z)}$. In this case the process $X^\prime$ is not needed to determine the coloring of $\T^2$ induced by $X\cup X^\prime$. More precisely, $X^\p$ is still not needed after perturbing the squares in $X$ by at most $1/4$ in each direction. As in Section~\ref{unifBound} one proves that $C^{(1)}_s$ occurs whp.

After these preparations we recall the coupling trick introduced by Bollob\'as and Riordan in \cite[Section 6.3]{bbVoronoi}. Let $p_1,p_2\in(0,1)$ be \st  $p_1<p_2$ and for $i\in\{1,2\}$ denote by 
$X_i=\{(z_{i,n},t_{i,n},\sigma_{i,n})\}_{1\leq n\leq N_i}\subset \T^2\times[0,\lambda]\times\{\pm1\}$ 
two $\{\pm1\}$-marked homogeneous Poisson processes with intensity $1$ and $\P(\sigma_{i,n}=1)=p_i$. Let $\gamma>0$ and write $\delta=\delta(s)=(4\lceil s^{\gamma}/4\rceil)^{-1}$. The value  of $\gamma$ will be specified in the proof of Theorem~\ref{kestenThm}. Furthermore, let $\delta_1=\lceil\delta^{-1/2}\rceil^{-1}$ and write $R_1=[0.25s,8.75s]\times [-0.25s,1.25s]$ and $R_2=[0.5s,8.5s]\times[-0.5s,1.5s]$. For ${\delta_0}>0$ we construct from $\varphi=\{(z_n,t_n,\sigma_n)\}_{1\leq n\leq N}\subset\T^2\times\R\times\{\pm1\} $ a $\{\pm1\}$-marked set $\varphi^{{\delta_0}}$ with the property that black squares are delayed and shrunk, while white ones are advanced and enlarged. More precisely, write $\varphi^{{\delta_0}}=\{(z_n,t_n+\sigma_n{\delta_0},\sigma_n)\}_{1\leq n\leq N}$, where the leaf associated with $(z_n,t_n+\sigma_n\delta_0)$ is given by $Q_{1-2\sigma_n\delta_0}(o)$. 
\begin{lemma}
\label{pubCoupLem}
Fix any $\gamma>0$ and any $p_1,p_2\in(0,1)$ \st $p_1<p_2$. There exists a coupling between $X_1$ and $X_2$ with $\P(E_g)\to1$ as $s\to\infty$, where $E_g$ denotes the intersection of the following events.
\begin{enumerate}
\item $N_1=N_2$ and $\left|y_{1,i}-y_{2,i}\right|_\infty\leq\delta_1$ for all $i\in\{1,\ldots,N_1\}$,
\item $\sigma_{2,i}\geq\sigma_{1,i}$ for all $i\in\{1,\ldots,N_1\}$,
\item there exists a $\psi_{X_2^{2\delta}}$-black horizontal crossing of $R_2$ or there does not exist a $\psi_{X_1}$-black horizontal crossing of $R_1$.
\end{enumerate}
\end{lemma}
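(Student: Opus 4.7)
My plan is to couple $X_1$ and $X_2$ through a shared Poisson backbone, which makes items (i) and (ii) hold deterministically, and then to establish item (iii) by exploiting the ``extra'' leaves that are black in $X_2$ but white in $X_1$.

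\emph{Coupling.} I would let $\Xi=\{(y_n,U_n)\}_{1\le n\le N}$ be a homogeneous marked Poisson point process on $\T^2\times[0,\lambda]$ of intensity $1$ whose marks $U_n$ are i.i.d.\ uniform on $[0,1]$. For $k\in\{1,2\}$ set $\sigma_{k,n}=1$ if $U_n\le p_k$ and $-1$ otherwise, and $X_k=\{(y_n,\sigma_{k,n})\}$. Then taking $y_{1,n}=y_{2,n}$ yields item (i) with equality, and $p_1<p_2$ gives $\sigma_{2,n}\ge\sigma_{1,n}$, i.e.\ item (ii). Moreover, the leaves with $p_1<U_n\le p_2$, black in $X_2$ but white in $X_1$, form an independent homogeneous Poisson process of intensity $(p_2-p_1)$ in space-time, which I will use as the reservoir for patching in item (iii).

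\emph{Approach to (iii).} The implication in (iii) is vacuous unless $X_1$ admits a $\psi_{X_1}$-black horizontal crossing $\Gamma$ of $R_1$. Since the $y$-extent of $R_1$ is contained in that of $R_2$, the restriction of $\Gamma$ to $x\in[0.5s,8.5s]$ is already a $\psi_{X_1}$-black horizontal crossing of $R_2$; the real task is to promote it to a $\psi_{X_2^{2\delta}}$-black crossing. The danger is that in $X_2^{2\delta}$ every black leaf shrinks by $2\delta$, so wherever two visible $X_1$-black leaves met along a common boundary the shrunken versions pull apart, and the dead-leaves rule may now expose a visible white leaf of $X_2$ at the resulting gap. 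The plan is to patch each such contact region by an extra-black leaf of $X_2$ whose nearly-unit-size shrunken version covers the gap and, crucially, becomes the earliest leaf visible there in $X_2^{2\delta}$.

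\emph{Probability estimate and main obstacle.} By the estimate from Section~\ref{unifBound} and the event $C^{(1)}_s$, the number of visible leaves touching $R_1$ is $O(s^{5/2})$ whp, so there are only $O(s^{5/2})$ contact regions to control. For each, the expected number of extra-black leaves whose original unit square would cover the gap, arriving before any competing white leaf, is of order $(p_2-p_1)\lambda$; since $\lambda\sim\log s$, the failure probability at each gap is polynomially small in $s$, and a union bound suffices. The main obstacle is enforcing dead-leaves visibility: an extra-black leaf only helps if it is the earliest leaf at the gap in $X_2^{2\delta}$, which forces a joint control of position, size, and arrival time while preserving enough independence to make the union bound go through. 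This intertwining of geometry and time-ordering is exactly the technical heart of the coupling and motivates the deferral of the detailed construction to Section~\ref{appendix}.
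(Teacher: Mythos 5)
Your coupling construction for items (i) and (ii) is fine, and you correctly identify that (iii) is the substance of the lemma. But the probabilistic estimate you give for patching a gap does not hold, and this is a genuine gap in the argument rather than a deferred technicality.

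Consider a ``contact region'' where two visible $\psi_{X_1}$-black leaves $A_1$, $A_2$ abut, and let $W$ be the white leaf of $X_2$ that becomes visible in the exposed strip after shrinking, with arrival time $t_w$. The extra-black leaves available to patch the gap must cover it \emph{and arrive before $t_w$}, and their number is Poisson with mean roughly $(p_2-p_1)\cdot\text{(area)}\cdot t_w$, not $(p_2-p_1)\lambda$ as you assert. Because $A_1$ and $A_2$ must be visible near the gap, they (and a fortiori any candidate patch) must arrive before $W$; there is no reason for $t_w$ to be of order $\lambda\sim\log s$, and in fact $t_w$ is essentially a uniform variable of order $O(1)$. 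Hence the per-gap failure probability is $\exp(-c(p_2-p_1)t_w)=\Theta(1)$ rather than polynomially small, and a union bound over $\Omega(s^{5/2})$ contact regions cannot succeed. An additional complication you do not address is that the ``competing white leaf'' $W$ might itself be extra-black in $X_2$, but then in $X_2^{2\delta}$ it is \emph{shrunk and delayed} rather than grown, so it need not cover the strip at all, and the analysis branches further.

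The paper's actual construction is not a patch-and-union-bound argument on a common-backbone coupling. It introduces a notion of $\delta_0$-unstable pairs, boundary-visibility, unstable triples, and bad components, and first proves (Lemmas~\ref{etaLem1}, \ref{tailBoundLem}, \ref{etaBound}, \ref{potBadLem}) that bad components are small ($O(\log s)$ and in fact $O(\log s/\log\log s)$) whp. It then performs a \emph{cross-over coupling} (Lemma~\ref{privCoupLem}): on each potentially bad component $C$, the event $B(C)$ that a $16\delta$-unstable pair is present is measure-swapped against a subset of the event $G(C)$ that every site of $C$ is simultaneously extra-black; the inequality $\P(G(C))\geq 2s^{-\gamma/3}\geq 2\,\P(B(C))$ (exploiting $|C|\leq\eta\log s$ and an appropriate choice of $\eta$ in terms of $\gamma$ and $p_2-p_1$) makes this swap possible. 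The upshot is a \emph{deterministic} guarantee on a whp event: whenever a bad component has a color mismatch, it contains no $16\delta$-unstable pair and $y_{1,j}=y_{2,j}$ on it. The remaining step (Lemmas~\ref{badSizeLem}--\ref{prepLemV3}) is then purely geometric, with no further union bound or patching probability to control. Your proposal is missing exactly this swapping idea; without it, the per-gap probability argument cannot close.
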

The proof of Lemma~\ref{pubCoupLem} is provided in Section~\ref{appendix}. Next, we recall the following variant (that appears in \cite[Lemma 1]{bbError}) of a sharp-threshold result due to Friedgut and Kalai, see \cite[Theorem 2.1]{friedgutKalai}. For $m,n\geq1$ we say that an event $E\subset \{0,\pm1\}^n$ has \emph{symmetry of order} $m$ if it is invariant under the action of a subgroup $\Gamma\subset\Sigma_n$ with the property that all of its orbits consist of at least $m$ elements (here $\Sigma_n$ denotes the symmetric group on $n$ symbols). Furthermore, for $p_b,p_w\in(0,1)$ with $p_b+p_w<1$ we write $\P_{p_b,p_w}$ for the product measure on the space $(\{0,\pm1\}^n,\mc{P}(\{0,\pm1\}^n))$ determined by the marginals $\P_{p_b,p_w}(\omega \in \{1\}\times \{0,\pm1\}^{n-1})=p_b$ and $\P_{p_b,p_w}(\omega \in \{-1\}\times \{0,\pm1\}^{n-1})=p_w$. Here $\mc{P}(\{0,\pm1\}^n)$ denotes the family of all subsets of $\{0,\pm1\}^n$.
\begin{proposition}
\label{FKThm}
There is an absolute constant $c_3>0$ such that if $0<p_b<q_b<1/e$ and $0<q_w<p_w<1/e$, if $E\subset \{0,\pm1\}^n$ is increasing and has symmetry of order $m$, and if $\P_{p_b,p_w}(E)>\eta$, then $\P_{q_b,q_w}(E)>1-\eta$ whenever
$$\min\{q_b-p_b,p_w-q_w\}\geq c_3\log(1/\eta)p_{\max}\log(1/p_{\max})/\log m ,$$
where $p_{\max}=\max\{q_b,p_w\}$.
\end{proposition}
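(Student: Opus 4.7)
The plan is to adapt the Friedgut-Kalai sharp-threshold argument from the Boolean cube $\{0,1\}^n$ to the three-letter alphabet $\{0,\pm 1\}^n$ equipped with the product measure $\P_{p_b,p_w}$. The three main ingredients are Russo's differentiation formula, a Talagrand-KKL type bound on influences, and the symmetry hypothesis. Writing $F(p_b,p_w)=\P_{p_b,p_w}(E)$, I would first apply Russo's formula to express $\partial F/\partial p_b$ and $-\partial F/\partial p_w$ as sums $\sum_i I_i^+$ and $\sum_i I_i^-$ of influences over the $n$ coordinates, where $I_i^+$ (respectively $I_i^-$) measures the effect of switching the $i$-th coordinate to $+1$ (respectively $-1$).

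Second, I would invoke a hypercontractive Talagrand-type inequality on the biased three-point product space: for any monotone increasing $E$,
\[
\sum_i (I_i^++I_i^-) \ge c\, F(1-F)\,\log\!\bigl(1/\max_i(I_i^++I_i^-)\bigr) \big/ \bigl(p_{\max}\log(1/p_{\max})\bigr),
\]
where the factor $p_{\max}\log(1/p_{\max})$ reflects the hypercontractivity constant of the noise operator on a biased three-point space and is the source of the requirement that all marginals lie below $1/e$. The symmetry of order $m$ then enters: since influences are constant on the orbits of the symmetry group $\Gamma$, all of size at least $m$, the total influence is at least $m$ times the maximum, so that $\max_i(I_i^++I_i^-) \le \sum_i(I_i^++I_i^-)/m$. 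Substituting into the preceding bound yields $\sum_i(I_i^++I_i^-) \ge c'\, F(1-F)\log m / \bigl(p_{\max}\log(1/p_{\max})\bigr)$, a differential inequality for $F$ that is independent of $n$.

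Finally, integrating this inequality along the straight segment from $(p_b,p_w)$ to $(q_b,q_w)$ in parameter space, the hypothesis $\min\{q_b-p_b,\, p_w-q_w\}\ge c_3\, p_{\max}\log(1/p_{\max})\log(1/\eta)/\log m$ guarantees that the accumulated rise in $F$ is sufficient to carry it from above $\eta$ to above $1-\eta$. The principal obstacle is step two: establishing the Talagrand-type inequality on the biased three-letter alphabet with the precise $p_{\max}\log(1/p_{\max})$ scaling requires a careful Bonami-Beckner hypercontractive estimate for the noise operator on a three-point space. This technical heart of the argument is carried out in \cite{bbError} and, in the classical Boolean setting, in \cite{friedgutKalai}, and can be appealed to directly rather than reproved here.
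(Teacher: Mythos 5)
The paper gives no proof of Proposition~\ref{FKThm}; it is quoted directly from \cite[Lemma 1]{bbError} (a variant of \cite[Theorem 2.1]{friedgutKalai}) and used as a black box. Your outline of the Russo--Margulis differentiation, a biased Talagrand/BKKKL-type influence inequality, the reduction of the maximal influence via the order-$m$ symmetry, and integration of the resulting differential inequality is a fair account of how the cited proof runs, and since you ultimately defer the hypercontractive step to \cite{bbError} as well, your treatment is essentially the same as the paper's.
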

We apply this result in the following situation. 
Let $X=(z_n,t_n,\sigma_n)_{1\leq n\leq N}$ be an independently $\{\pm1\}$-marked homogeneous Poisson point process on $\T^2\times[0,\lambda]$ with intensity $1$ and $\P(\sigma_n=1)=p$. Furthermore, write $Y_b=\{(z_n,t_n):\sigma_n=1\}_{(z_n,t_n,\sigma_n)\in X}$, $Y_w=\{(z_n,t_n):\sigma_n=-1\}_{(z_n,t_n,\sigma_n)\in X}$ and $Y=Y_b\cup Y_w$. Discretize $\T^2\times[0,\lambda]$ into $n_0=n_0(s)=100s^2\lambda\delta^{-3}$ cubes $Q_i$ of the form $Q_i=Q_{\delta}(z)\times [(k-1)\delta,k\delta]$ where $z\in \T^2\cap \delta\Z^2$ and $k\in\{1,\ldots,\lambda\delta^{-1}\}$. We call $Q_i$ \emph{white} if $Q_i\cap Y_w\neq\es$ and \emph{black} if $Q_i\cap Y_b\neq\es$ and $Q_i\cap Y_w=\es$. Otherwise, i.e., if $Q_i\cap Y=\es$, we say that $Q_i$ is \emph{neutral}. Then the states of different cubes are independent and the probabilities that a fixed cube has a specified state are given by
\begin{align*}
p_{\text{white}}&=1-\exp(\delta^3(1-p))\\
p_{\text{black}}&=\exp(-\delta^3(1-p))(1-\exp(-\delta^3p))\\
p_{\text{neutral}}&=\exp(-\delta^3).
\end{align*}
In other words, this probability measure, denoted by $\P^{\T^2}_p$ in the following, is a product measure on $\{0,\pm1\}^{n_0}$ and our goal is to apply Proposition~\ref{FKThm}. The relevant subgroup $\Gamma\subset\Sigma_{n_0}$ is generated by the translations $(x,y,z)\mapsto (x+\delta,y,z)$ and $(x,y,z)\mapsto (x,y+\delta,z)$ of $(\T^2\cap\delta\Z^2)\times((0,\lambda]\cap\delta\Z)$. 

In the following, it will be convenient to associate a coloring $\psi_\omega$ to any discrete configuration $\omega\in\{0,\pm1\}^{n_0}$. Namely define 
$$\varphi_\omega=\{(z,t+(\omega(z,t)-1)\delta/2,\omega(z,t))\}_{(z,t)\in (\delta\Z^2\cap \T^2)\times(\delta\Z\cap(0,\lambda]),\omega(z,t)\in\lcu\pm1\rcu},$$ where the leaf attached to $(z,t+(\omega(z,t)-1)\delta/2,\omega(z,t))$ is given by $Q_{1-2\omega(z,t)\delta}(o))$ and also put $\psi_\omega=\psi_{\varphi_\omega}$.
\begin{lemma}
\label{smallLem}
Let $\varphi,\varphi^\p\in C^{(1)}_s$ be finite subsets that induce the same discrete configuration $\omega\in\{0,\pm1\}^{n_0}$. Then,
\begin{enumerate}
\item $\psi_{\varphi^\p}$ black-dominates $\psi_\omega$, and 
\item $\psi_\omega$ black-dominates $\psi_{\varphi^{2\delta}}$.
\end{enumerate}
\end{lemma}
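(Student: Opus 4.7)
I will prove both parts via a pointwise comparison at each $u\in\T^2$. For any $\{\pm1\}$-marked configuration $\eta$, let $t^b_\eta(u),t^w_\eta(u)\in[0,\infty]$ denote the earliest times at which a black, respectively white, $\eta$-leaf covers $u$. Then $\psi_\eta(u)=1$ iff $t^b_\eta(u)<t^w_\eta(u)$, and a short case-check on the three possible values of $\psi_{\eta'}(u)$ shows that the black-domination $\psi_\eta\geq\psi_{\eta'}$ follows once one knows the pair of inequalities $t^b_\eta\leq t^b_{\eta'}$ and $t^w_\eta\geq t^w_{\eta'}$ at every $u$. The proof reduces to verifying these two inequalities in each part; the geometric content is simply that colored cubes translate cleanly into black or white leaves in the various configurations.

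\textbf{Part (i).} Take $\eta=\varphi^\p$ and $\eta'=\varphi_\omega$. A black leaf realising $t^b_{\varphi_\omega}(u)$ sits at $(\hat z,\hat k\delta,1)$ with shape $Q_{1-2\delta}(\hat z)$, corresponding to a black cube in $\omega$; since $\varphi^\p$ also induces $\omega$, this cube contains a black point $(z,t,1)\in\varphi^\p$. The bound $|u-z|_\infty\leq(1-2\delta)/2+\delta/2<1/2$ shows that the unit-side $\varphi^\p$-leaf at $z$ also covers $u$, and since $t\leq\hat k\delta$, we get $t^b_{\varphi^\p}(u)\leq t^b_{\varphi_\omega}(u)$. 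Conversely, a white $\varphi^\p$-leaf at $(z,t,-1)$ realising $t^w_{\varphi^\p}(u)$ lies in a cube that is automatically white in $\omega$, so $\varphi_\omega$ contributes an enlarged white leaf at $(\bar z,\bar k\delta-\delta,-1)$ of shape $Q_{1+2\delta}(\bar z)$; the bound $|u-\bar z|_\infty\leq 1/2+\delta/2\leq(1+2\delta)/2$ shows this leaf covers $u$, and $\bar k\delta-\delta\leq t$ gives $t^w_{\varphi_\omega}(u)\leq t^w_{\varphi^\p}(u)$.

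\textbf{Part (ii).} Take $\eta=\varphi_\omega$ and $\eta'=\varphi^{2\delta}$. The white inequality is direct: every white $\varphi_\omega$-leaf at $(\bar z,\bar k\delta-\delta,-1)$ arises from a white cube whose witnessing white point $(z'',t'',-1)\in\varphi$ becomes $(z'',t''-2\delta,-1)\in\varphi^{2\delta}$ with shape $Q_{1+4\delta}(z'')$; this still covers $u$ (since $|u-z''|_\infty\leq(1+3\delta)/2\leq(1+4\delta)/2$) and its time $t''-2\delta$ is strictly smaller than $\bar k\delta-\delta$, giving $t^w_{\varphi^{2\delta}}(u)\leq t^w_{\varphi_\omega}(u)$. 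The black inequality is subtler, since a black $\varphi^{2\delta}$-leaf at $(z,t+2\delta,1)$ of shape $Q_{1-4\delta}(z)$ originates from a $\varphi$-point whose cube could a priori be white. The resolution is that whenever $t^b_{\varphi^{2\delta}}(u)\leq t^w_{\varphi^{2\delta}}(u)$ -- the only range in which the black inequality is needed for the reduction -- the cube of the minimum-time black $\varphi^{2\delta}$-leaf must itself be black: otherwise its witnessing white point, after being shifted backward by $2\delta$ and enlarged to $Q_{1+4\delta}$, would produce a white $\varphi^{2\delta}$-leaf covering $u$ at time at most $t-\delta$, contradicting $t+2\delta=t^b_{\varphi^{2\delta}}(u)\leq t^w_{\varphi^{2\delta}}(u)$. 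Once the cube is known to be black, the same arithmetic as in part~(i) gives $|u-\hat z|_\infty\leq(1-3\delta)/2\leq(1-2\delta)/2$, so $\varphi_\omega$ has a black leaf at $(\hat z,\hat k\delta,1)$ covering $u$ at time $\leq t+\delta<t+2\delta$, completing the proof.

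\textbf{Main obstacle.} The substance is essentially bookkeeping among the perturbed side-lengths ($Q_{1\pm2\delta}$, $Q_{1\pm4\delta}$) and the matching time shifts in multiples of $\delta$, together with careful handling of the tie cases via the strict inequality $\psi=1\iff t^b<t^w$. The premise $\varphi,\varphi^\p\in C^{(1)}_s$, combined with $\delta$ being small (e.g., $\delta<1/6$), guarantees that for every $u$ at least one leaf of each of $\varphi_\omega$, $\varphi^\p$, $\varphi^{2\delta}$ covers $u$, so that the minima $t^b,t^w$ are never simultaneously infinite and the reduction is never vacuous.
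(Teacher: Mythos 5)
Your proof is correct and takes essentially the same approach as the paper: a pointwise comparison of the earliest black and white covering times together with the elementary arithmetic on the side-lengths $1\pm 2\delta$, $1\pm 4\delta$ and the time shifts in multiples of $\delta$. The paper phrases both parts contrapositively (e.g., for~(i): from $\psi_{\varphi^\p}(\zeta)=-1$ it exhibits a white $\varphi_\omega$-leaf at height $t-\delta$ covering $\zeta$ and shows every black $\varphi_\omega$-leaf covering $\zeta$ has height $>t-\delta$), whereas you formalize the same comparison with the functions $t^b,t^w$; the two are logically equivalent. Your explicit treatment of the subtlety in part~(ii) -- that a black $\varphi$-point could a priori sit in a white cube, and that this is excluded whenever $t^b_{\varphi^{2\delta}}(u)\leq t^w_{\varphi^{2\delta}}(u)$ -- is also present in the paper, though only implicitly: in the paper's argument, plugging $(\wt z,\wt t)=(z,t)$ into the ``suppose'' step would yield $t< t-\delta$, which forces the cube $(z,t)$ to be black. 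Your version of the reduction also cleanly handles the tie case $\psi=0$, which the paper sidesteps by taking its colorings to be $\{\pm 1\}$-valued.
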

\begin{proof}
To prove the first claim, let $\zeta\in\R^2$ with $\psi_{\varphi^\p}(\zeta)=-1$ be arbitrary. Next, choose $(z_{n_1}^\p,t_{n_1}^\p,-1)\in\varphi^\p$ with $\zeta\in Q\(z_{n_1}^\p\)$ and \st $t_{n_1}^\p$ is minimal with this property. Also choose $(z,t)\in \delta\Z^2\times\(\delta\Z\cap (0,\lambda]\)$ with $\(z_{n_1}^\p,t_{n_1}^\p\)\in Q_\delta(z)\times [t-\delta,t]$. Now suppose that $\(\wt{z},\wt{t}\)\in\delta\Z^2\times \(\delta\Z\cap (0,\lambda]\)$ is \st $\omega\(\wt{z},\wt{t}\)=1$ and \st $\zeta\in Q_{1-2\delta}(\wt{z})$. Then \tes $\(z_{n_2}^\p,t_{n_2}^\p,1\)\in \varphi^\p$ with $\(z_{n_2}^\p,t_{n_2}^\p\)\in Q_\delta\(\wt{z}\)\times \[\wt{t}-\delta,\wt{t}\]$. From $\zeta\in Q\(z_{n_2}^\p\)$ we conclude $t_{n_2}^\p>t_{n_1}^\p$, \sot $t-\delta\le t_{n_1}^\p<t_{n_2}^\p\le \wt{t}$.

We proceed similarly for the second claim. Let $\zeta\in\R^2$ with $\psi_{\varphi^{2\delta}}(\zeta)=1$ be arbitrary. Next, choose $(z_{n_1},t_{n_1},1)\in\varphi^\p$ with $\zeta\in Q_{1-4\delta}\(z_{n_1}\)$ and \st $t_{n_1}$ is minimal with this property. Also choose $(z,t)\in \delta\Z^2\times\(\delta\Z\cap (0,\lambda]\)$ with $\(z_{n_1},t_{n_1}\)\in Q_\delta(z)\times [t-\delta,t]$. Now suppose that $\(\wt{z},\wt{t}\)\in\delta\Z^2\times \(\delta\Z\cap (0,\lambda]\)$ is \st $\omega\(\wt{z},\wt{t}\)=-1$ and \st $\zeta\in Q_{1+2\delta}(\wt{z})$. Then \tes $\(z_{n_2},t_{n_2},-1\)\in \varphi$ with $\(z_{n_2},t_{n_2}\)\in Q_\delta\(\wt{z}\)\times \[\wt{t}-\delta,\wt{t}\]$. From $\zeta\in Q_{1+4\delta}\(z_{n_2}\)$ we conclude $t_{n_2}-2\delta>t_{n_1}+2\delta$, \sot $t\le t_{n_1}+2\delta<t_{n_2}-2\delta\le \wt{t}-\delta$.
\end{proof}

After these preliminaries the proof of $\theta(p)>0$ for $p>1/2$ is now very similar to \cite[Theorem 1.1]{bbVoronoi} and \cite[Theorem 17]{bbBook} and we only present the main ideas.
\begin{theorem}
\label{kestenThm}
Let $p>1/2$ be arbitrary. Then $\theta(p)>0$.
\end{theorem}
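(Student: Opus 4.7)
My plan is the Bollob\'as--Riordan two-step: combine the RSW lower bound (Proposition~\ref{bbRSW}) with the sharp-threshold result (Proposition~\ref{FKThm}), using the couplings of Lemmas~\ref{pubCoupLem} and~\ref{smallLem} to move between the continuum confetti process on $\R^2$ and its discretization on $\T^2_{10s}$. Fix $p>1/2$ and an intermediate $p_2\in(1/2,p)$. The remark after Proposition~\ref{bbRSW} gives $\P_{1/2}(H([0,s]^2))=1/2$, so applying Proposition~\ref{bbRSW} at parameter $1/2$ with aspect ratio $17/3$ produces a constant $c'>0$ and arbitrarily large integers $s$ with
\begin{equation*}
\P_{1/2}\bigl(H(R_1)\bigr)\ge c', \qquad R_1=[0.25s,8.75s]\times[-0.25s,1.25s].
\end{equation*}
Since $R_1$ lives inside a single fundamental domain of $\T^2_{10s}$ with a buffer well above the (finite) range of dependence, the same bound holds on the torus, up to an $o(1)$ error as $s\to\infty$.

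I would then apply Lemma~\ref{pubCoupLem} with $(p_1,p_2)=(1/2,p_2)$. On the good event $E_g$ of probability $1-o(1)$, item (iii) converts any $\psi_{X_1}$-black crossing of $R_1$ into a $\psi_{X_2^{2\delta}}$-black crossing of the slightly smaller rectangle $R_2=[0.5s,8.5s]\times[-0.5s,1.5s]$, and Lemma~\ref{smallLem}(ii) (valid on the whp event $C^{(1)}_s$) further upgrades this into a $\psi_\omega$-black crossing of $R_2$, where $\omega$ is the discrete configuration induced by $X_2$. Hence
\begin{equation*}
\P_{p_2}^{\T^2}\bigl(\omega\text{ has a black horizontal crossing of }R_2\bigr)\ge c'-o(1).
\end{equation*}
This discrete crossing event is black-increasing and invariant under the translation subgroup of $\Sigma_{n_0}$ acting on the discrete $\delta$-grid of the torus, whose orbits all have size $m\asymp (10s/\delta)^2\asymp s^{2+2\gamma}$. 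With $p_{\max}\asymp\delta^3=s^{-3\gamma}$ and $\log(1/p_{\max}),\log m\asymp\log s$, the Friedgut--Kalai condition in Proposition~\ref{FKThm} reduces to requiring $\gamma$ to be small in terms of $p-p_2$ and the target error $\eta>0$. Fixing such a $\gamma$, the proposition pushes the discrete crossing probability from $\ge c'-o(1)$ at $p_2$ up to $\ge 1-\eta$ at $p$, and Lemma~\ref{smallLem}(i) pulls this back to the continuum: $\P_p\bigl(H(R_2)\bigr)\ge 1-\eta-o(1)$, with $R_2$ of aspect ratio $4$.

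Once rectangle crossings at $p$ have probability close to $1$, the rest is a routine FKG-plus-Borel--Cantelli argument. By the $\Z^2$ symmetries (B2), the analogous bound holds for vertical crossings, and four long rectangles arranged around an annulus of scale $s$ can be chained via Harris's inequality (Lemma~\ref{harrisLem}) to produce a black circuit around the origin with probability at least $1-O(\eta)$. Iterating along a geometric sequence of scales $s_k$ with errors $\eta_k$ summable, Borel--Cantelli yields almost surely infinitely many nested black circuits around the origin, and an additional layer of connecting vertical crossings in the intermediate annuli (each of probability close to $1$, again by FKG) glues consecutive circuits into a single unbounded black component; combined with Proposition~\ref{uniqProp}, this gives $\theta(p)>0$. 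The hard part I expect is the careful bookkeeping of the cascading $o(1)$ error terms---from $E_g$, from $C^{(1)}_s$, from the $\R^2\leftrightarrow\T^2_{10s}$ transfer, and from the $\varphi\mapsto\varphi^{2\delta}$ perturbation---which must all be absorbed into $\eta$ uniformly along the RSW subsequence of scales, together with the coupled choice of the mesh exponent $\gamma$ made \emph{before} the scale $s$ is fixed so that the Friedgut--Kalai hypothesis holds simultaneously.
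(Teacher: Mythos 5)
Your proposal matches the paper's proof step-for-step through the core: the $\P_{1/2}(H([0,s]^2))=1/2$ remark plus Proposition~\ref{bbRSW} giving a uniform constant $c_0$ for the long rectangle, the transfer to the torus, Lemma~\ref{pubCoupLem} with $p_1=1/2$ and an intermediate $p'$ (the paper uses $p'=(p+1/2)/2$), Lemma~\ref{smallLem}(ii) to discretize, Proposition~\ref{FKThm} with $\gamma\asymp (p-1/2)$ tuned so that the Friedgut--Kalai hypothesis holds, and Lemma~\ref{smallLem}(i) to return to the continuum with crossing probability $\geq 1-\eta$. Up to that point you and the paper are doing the same thing.

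Where you diverge is the final deduction of $\theta(p)>0$ from high crossing probabilities. The paper simply invokes~\cite[Theorem 17]{bbBook}, which at a \emph{single} scale $s$ (with crossing probability $\geq 0.99$) builds a renormalized $1$-dependent bond percolation that is supercritical; since confetti percolation has bounded range, this comparison is immediate. You instead propose a Borel--Cantelli argument over nested circuits at a geometric sequence of scales $s_k$ with summable failure probabilities $\eta_k$. This route has two genuine obstacles that the paper's route sidesteps. First, summability forces $\eta_k\to 0$, which in turn forces the mesh exponent $\gamma_k$ in Proposition~\ref{FKThm} to decrease with $k$, so you would have to re-run the entire discretization and coupling (Lemma~\ref{pubCoupLem}, $C_s^{(1)}$, $C_{s,\eta}^{(3)}$, Lemma~\ref{smallLem}) with a different mesh at each scale and verify all the ``whp'' rates uniformly as $\gamma_k\downarrow 0$; none of the lemmas as stated give such uniformity, and establishing it is nontrivial extra work. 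Second, Proposition~\ref{bbRSW} only produces an unbounded sequence of good scales, not one at your disposal, so you cannot in general arrange a geometric sequence with bounded ratios $s_{k+1}/s_k$; when the ratio is large, the ``intermediate annulus'' connecting circuit $k$ to circuit $k+1$ has unbounded aspect ratio, and gluing requires chaining an unbounded number of rectangle crossings, which reintroduces $(1-\eta_k)^{O(s_{k+1}/s_k)}$ losses you cannot control. The paper's $1$-dependent comparison needs only one good scale and a single fixed error $0.01$, which is exactly why it is the cleaner endgame; if you want to keep your circuit argument, you should first use the single-scale renormalization to get crossing probability close to $1$ at \emph{all} sufficiently large scales (a corollary of supercriticality of the block model), and only then run Borel--Cantelli.
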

\begin{proof}
Let $p>1/2$ be arbitrary and consider confetti percolation on $\R^2$ with parameter $p$. As in \cite[Theorem 17]{bbBook} it suffices to prove that for all $s_0>0$ there exists $s\geq s_0$ such that the probability of obtaining a black horizontal crossing in a fixed $(3s\times s)$-rectangle is at least $0.99$. In the following, for $q\in(0,1)$ we denote by $\P_q$ the probability measure defined by an independently
$\{\pm1\}$-marked homogeneous Poisson processes on $\T^2\times[0,\lambda]$ with intensity $1$ and whose mark is equal to $1$ with probability $q$.

Let $c>0$ be an absolute (large) constant and write $\gamma=(p-1/2)/c$.
By Proposition~\ref{bbRSW} and the subsequent remark there exists $c_0>0$ with $\P_{1/2}(H([0,9s]\times[0,s]))\geq c_0$ for arbitrarily high values of $s$. We write $R_0=[0,9s]\times[0,s]$ for a fixed $(9s\times s)$-rectangle and $E_1$ for the event that a $\psi_{X}$-black horizontal crossing occurs in $R_0$. Note that the event $E_1$ implies the existence of a $\psi_X$-black horizontal crossing of $R_1$. We denote by $E_2$ the event that there exists a $\psi_{X^{2\delta}}$-black horizontal crossing of $R_2$ and write $p^\prime=(p+1/2)/2$. Then we may apply Lemma~\ref{pubCoupLem} with $p_1=1/2$ and $p_2=p^\p$. As the global event $E_g$ described in that lemma holds whp, we see that the following bound is true for arbitrarily high values of $s$:
\begin{align}
\label{eqLab1}
\P_{p^\prime}(E_2)&\geq\P_{1/2}(E_1)-\P_{1/2}(E^c_g)\geq c_0/2.
\end{align}
Denote by $E_3$ the event in the probability space $(\{0,\pm1\}^{n_0},\mc{P}(\{0,\pm1\}^{n_0}))$ that \emph{some} $(8s\times 2s)$-rectangle in the discrete torus $\delta\Z^2\cap\T^2$ admits a $\psi_\omega$-black horizontal crossing. By part $(ii)$ of Lemma~\ref{smallLem}, \eqref{eqLab1} implies $\P_{p^\prime}^{\T^2}(E_3)\geq c_0/2$.
Note that $E_3$ is a black-increasing event that has symmetries of order $m=(10s/\delta)^2$. Let $\varepsilon=\min\{c_0/2,10^{-100}\}$. Then similarly to~\cite[Theorem 17]{bbBook} for $\delta=(4\lceil s^{\gamma}/4\rceil)^{-1}$ one can show that it is possible to choose $c>0$ \suf large, so that one may apply Proposition~\ref{FKThm} and conclude
$$\P^{\T^2}_p(E_3)\geq1-\varepsilon\geq1-10^{-100}.$$
If we denote by $E_4$ the event that some $(8s\times2s)$-rectangle in $\T^2$ has a black horizontal crossing, then we obtain from part $(i)$ of Lemma~\ref{smallLem} that $E_3\subset E_4$. Now we can follow~\cite[Theorem 17]{bbBook} verbatim to see that for arbitrarily high values of $s$ the occurrence of a black horizontal crossing of a fixed $(12s\times4s)$-rectangle has probability at least $0.99$.
\end{proof}

\section{Proof of Lemma~\ref{pubCoupLem}}
\label{appendix}
In this section we provide a proof of Lemma~\ref{pubCoupLem}. First, we want to formalize the notion of unstable configuration of squares whose connectivity properties can be altered by small perturbations in the space or time dimension.
Since the notation $\delta=\delta(s)\sim s^{-\gamma}$ is reserved for the mesh size corresponding to the discretization we are about to construct, we use the notation $\delta_0$ for a temporary positive variable that could take the value of $\delta(s)$ or some related quantity. Furthermore, denote by $\pi_1: \T^2\times[0,\lambda]\times\{\pm1\}\to \T^2$ and $\pi_{1,2}: \T^2\times[0,\lambda]\times\{\pm1\}\to \T^2\times[0,\lambda]$ the projection onto the first coordinate and the first two coordinates, respectively. Then $\pi_1(X)\subset \T^2$ is a homogeneous Poisson point process on $\T^2$ with intensity $\lambda$ and $\pi_{1,2}(X)$ is a homogeneous Poisson point process on $\T^2\times[0,\lambda]$ with intensity $1$. For ${\delta_0}>0$ and $z,z^\prime\in \pi_1(X)$, $z\neq z^\prime$ we say that $\{z,z^\p\}$ forms a \emph{spatially} ${\delta_0}$\emph{-unstable pair} if and only if $z-z^\p\in A_{{\delta_0}}$, where we write 
$$A_{{\delta_0}}=\{z\in\T^2: z\in Q_{2\delta_0}(o)\oplus((\{0,\pm 1\}\times[-2,2])\cup ([-2,2]\times \{0,\pm 1\}))\},$$
and where for any $A,B\subset \R^2$ we put $A\oplus B=\lcu a+b:\, a\in A, b\in B\rcu$.
See Figure~\ref{fig1} for an illustration of the set $A_{\delta_0}$.
\begin{figure}[!htpb]
\centering
\begin{tikzpicture}[scale=0.25]
\fill[pattern=north west lines, pattern color=black]
(-16,-0.5) rectangle (16,0.5);
\fill[pattern=north west lines, pattern color=black]
(-0.5,-16) rectangle (0.5,16);

\fill[pattern=north west lines, pattern color=black]
(7,-16) rectangle (8,16);
\fill[pattern=north west lines, pattern color=black]
(-8,-16) rectangle (-7,16);

\fill[pattern=north west lines, pattern color=black]
(-16,-8) rectangle (16,-7);
\fill[pattern=north west lines, pattern color=black]
(-16,7) rectangle (16,8);

\draw (-4,-4) rectangle (4,4);

\coordinate[label=below:$o$] (x1) at (0,0);
\coordinate[label=below:$2{\delta_0}$] (x2) at (-6.5,-8);
\coordinate[label=below:$1$] (x3) at (3.8,-8);

\draw[decorate,decoration={brace,mirror}] (-8,-8)--(-7,-8);
\draw[decorate,decoration={brace,mirror}] (0,-8)--(7.5,-8);
\fill (0,0) circle (5pt);
\end{tikzpicture}
\caption{The square $Q(o)$ (black) and the set $A_{{\delta_0}}$ (hatched)}
\label{fig1}
\end{figure}
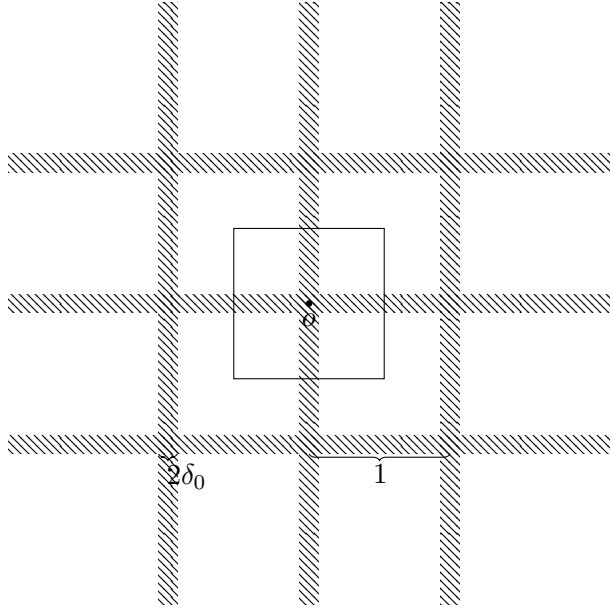

Similarly, we can define instabilities with respect to time. For $t,t^\p>0$, $t\neq t^\p$, we say that $\{t,t^\p\}$ forms a \emph{temporally} ${\delta_0}$\emph{-unstable pair} if $\left|t-t^\p\right|<{\delta_0}$. 

Finally, for $y=(z,t),\,y^\p=(z^\p,t^\p)\in\T^2\times[0,\lambda]$ we say that $\{y,y^\p\}$ forms a ${\delta_0}$\emph{-unstable pair} if the following two properties hold:
\begin{enumerate}
\item $\left|y-y^\p\right|_\infty\leq 2+{\delta_0}$ and
\item $\{z,z^\p\}$ forms a spatially ${\delta_0}$-unstable pair or $\{t,t^\p\}$ forms a temporally ${\delta_0}$-unstable pair.
\end{enumerate}
 As already mentioned above, due to the coarseness of the discretization we \emph{cannot} exclude the existence of $\delta$-unstable pairs whp. We first note that the total number of $\delta$-unstable pairs in a $\log s$-square is bounded above by a constant whp.
\begin{lemma}
\label{etaLem1}
Let $X$ be as above, let ${\gamma_0}>0$ be arbitrary and let ${\delta_0}={\delta_0}(s)\leq s^{-{\gamma_0}}$. Then \tes $K>0$ \st the probability that the total number of $\delta_0$-unstable pairs contained in $Q_{\log s}(o)\times[0,\lambda]$ is larger than $K$ is of order $O\(s^{-3}\)$.
\end{lemma}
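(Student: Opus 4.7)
The plan is to control $\P(N > K)$, where $N$ denotes the total number of $\delta_0$-unstable pairs in $W := Q_{\log s}(o) \times [0,\lambda]$, by bounding a high-order factorial moment and invoking the factorial-moment Markov inequality $\P(N > K) \leq \E[N(N-1)\cdots(N-K)] / (K+1)!$. It will suffice to choose $K = K(\gamma_0)$ so large that $\E[N^{(K+1)}] = O(s^{-3})$.

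\textbf{First moment.} By Mecke's formula for the marked Poisson process $X$,
\begin{equation*}
\E[N] = \tfrac{1}{2}\int_W\!\int_W \mathbf{1}\bigl[\{y,y'\} \text{ is } \delta_0\text{-unstable}\bigr]\, dy \, dy'.
\end{equation*}
For each fixed $y=(z,t) \in W$, the set of admissible $y' = (z',t')$ has Lebesgue measure $O(\delta_0)$: the spatial case confines $z'-z$ to the cross-shaped set $A_{\delta_0}$ of area $O(\delta_0)$ while $t'$ ranges over an interval of length $O(1)$, and the temporal case confines $|t-t'| < \delta_0$ while $z'$ lies in a region of area $O(1)$. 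Since $|W| = O((\log s)^3)$, this yields $\E[N] \leq C (\log s)^3 \delta_0 = O((\log s)^3 s^{-\gamma_0})$.

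\textbf{Higher factorial moments via graph expansion.} For an integer $r\geq 2$, multivariate Mecke expresses $\E[N^{(r)}]$ as a sum, over simple labelled graphs $G$ on $k$ vertices (with $r\leq k\leq 2r$ and no isolated vertex) having exactly $r$ edges, of combinatorial weights $\kappa(G)$ times
\begin{equation*}
I(G) = \int_{W^k} \prod_{\{i,j\}\in G} \mathbf{1}\bigl[\{y_i,y_j\} \text{ is } \delta_0\text{-unstable}\bigr]\, dy_1 \cdots dy_k.
\end{equation*}
When $G$ is connected I pick a spanning tree and integrate the vertices one at a time: the root contributes $|W|$, each subsequent vertex is restricted by its tree-parent's unstability neighbourhood (measure $O(\delta_0)$), and the remaining $r-(k-1)$ cycle-edges only supply factors $\leq 1$. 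This yields $I(G) \leq |W|(C\delta_0)^{k-1}$, and splitting across the $c$ connected components of a general $G$ gives $I(G) \leq |W|^c(C\delta_0)^{k-c}$. Since every component contains at least two vertices one has $c \leq k/2$, and once $s$ is large enough that $|W|\delta_0 < 1$, the expression $|W|^c \delta_0^{k-c}$ is maximised at $c=1$ with $k$ as small as the pigeonhole constraint $\binom{k}{2}\geq r$ allows, i.e.\ $k \geq \lceil (1+\sqrt{1+8r})/2 \rceil$. As the number of graph types with $r$ edges on at most $2r$ vertices is a constant $\kappa_r$ depending only on $r$, this gives
\begin{equation*}
\E[N^{(r)}] \;\leq\; \kappa_r \, (\log s)^3 \, s^{-\gamma_0(\sqrt{2r}-1)}.
\end{equation*}

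\textbf{Choice of $K$.} Fix $r = K+1$ so large (depending on $\gamma_0$) that $\gamma_0(\sqrt{2r}-1) \geq 3+\gamma_0$; then $(\log s)^3 \leq s^{\gamma_0}$ for all sufficiently large $s$, whence $\E[N^{(K+1)}] = O(s^{-3})$ and the factorial-moment Markov inequality yields $\P(N > K) = O(s^{-3})$. The main obstacle is the bookkeeping in the graph-expansion step: one must verify that the spanning-tree integral bound is tight enough, that the small-$k$ connected regime is genuinely dominant under the hypothesis $|W|\delta_0 < 1$, and that the combinatorial factor $\kappa_r$ stays bounded uniformly in $s$.
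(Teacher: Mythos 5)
Your proof is correct in essence but takes a genuinely different route from the paper. The paper's argument is a two-step conditioning argument: first a Poisson concentration bound caps the number $M$ of points in $Q_{\log s}(o)\times[0,\lambda]$ at $O((\log s)^3)$ with probability $1-O(s^{-3})$, then among $M$ iid uniform points one shows (a) no single point belongs to more than $K_2$ unstable pairs, and (b) placing the points sequentially, the conditional probability that the $(i+1)$-st point forms an unstable pair with any earlier one is $O((\log s)^3\delta_0)$, so more than $K_3$ pairs forces the ``join event'' to occur $\geq K_3/K_2$ times, yielding a product bound. Your approach replaces this by a moment computation: the factorial-moment Markov inequality together with a graph-expansion of $\E[N^{(r)}]$ via the multivariate Mecke formula and a spanning-tree integration. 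Both arguments work; yours is perhaps cleaner to state but shifts the work into the combinatorial bookkeeping of the graph sum, whereas the paper's is more elementary but needs the intermediate ``bounded degree'' step.

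Two small imprecisions in your write-up. First, the stated vertex range ``$r\leq k\leq 2r$'' is wrong: the minimum number of vertices for a simple graph with $r$ edges and no isolated vertex is the smallest $k$ with $\binom{k}{2}\geq r$, which is $\lceil(1+\sqrt{1+8r})/2\rceil$ and is smaller than $r$ for $r\geq 4$; you in fact use the correct pigeonhole bound later, so this is only a slip, not a gap. Second, the claim that $|W|\delta_0<1$ already forces the maximum of $|W|^c\delta_0^{k-c}$ to be at $(c,k)=(1,k^*)$ is not quite right: e.g.\ for $r=3$ one compares $(\log s)^3 s^{-2\gamma_0}$ (from $c=1,k=3$) with $(\log s)^9 s^{-3\gamma_0}$ (from $c=3,k=6$), and the former dominates only once $s^{\gamma_0}\geq(\log s)^6$, which is stronger than $|W|\delta_0<1$. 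The correct statement is that $k-c\geq k^*-1$ always holds (by convexity of $\binom{v}{2}$), so $|W|^c\delta_0^{k-c}\leq (\log s)^{3c} s^{-\gamma_0(k^*-1)}\leq (\log s)^{3r}s^{-\gamma_0(k^*-1)}$, and the polylog factor $(\log s)^{3r}$ is harmless since $r$ is fixed. Either phrasing yields the desired $O(s^{-3})$ after choosing $K$ with $\gamma_0(\sqrt{2(K+1)}-2)\geq 3$ (equivalently your condition $\gamma_0(\sqrt{2r}-1)\geq 3+\gamma_0$), for all sufficiently large $s$.
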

\begin{proof}
 By the Poisson concentration inequality (see e.g. \cite[Lemma 1.2]{penrose}) there exist constants $c_1,K_1>0$ such that for $M=K_1\(\log s\)^3$ we have $\P\(\#(\pi_1(X)\cap(Q_{\log s}(o)))\geq M\)\leq c_1s^{-3}$. Let $y_1,\ldots,y_M$ be iid points placed uniformly at random in $Q_{\log s}(o)\times[0,\lambda]$.
Next, denote by 
$$A^\p=\bigcup_{j=1}^M\lcu \left|\lcu k\in\{1,\ldots,M\}:\,\{y_j,y_k\}\text{ is $\delta_0$-unstable}\rcu\right|>K_2\rcu$$ 
the event that there exists $j\in \{1,\ldots, M\}$ such that $y_j$ is contained in more than $K_2$ pairs which are ${\delta_0}$-unstable, where $K_2>0$ is a constant to be specified. Then we can choose $c_2,K_2>0$ with $\P(A^\p)\leq c_2s^{-3}$.
Indeed, for any pairwise distinct $j_1,\ldots,j_{K_2}\in \{2,\ldots,M\}$ the probability that all pairs $\{y_1,y_{j_1}\},\ldots, \{y_1,y_{j_{K_2}}\}$ are $\delta_0$-unstable is at most 
$$\(\(\lambda \nu_2\(A_{\delta_0}\)+2\delta_0\(\log s\)^2\)/\(\lambda \(\log s\)^2\)\)^{K_2},$$
where $\nu_2$ denotes the Lebesgue measure in $\R^2$.
The latter probability is in $O\(s^{-4}\)$, provided that $K_2>0$ is chosen sufficiently large, while the number of possible choices of pairwise distinct $\{j_1,\ldots,j_{K_2}\}$ is in $O\(M^{K_2}\)$. Hence, $\P(A^\p)\leq c_2s^{-3}$, as desired.

Furthermore, we claim the existence of $c_3,K_3>0$ (independent of $s$) such that the probability that the number of ${\delta_0}$-unstable pairs formed by the points $y_1,\ldots,y_M$ placed uniformly at random in $Q_{\log s}(o)\times[0,\lambda]$ exceeds $K_3$ is at most $c_3s^{-3}$. Indeed, suppose that for $i\geq1$ the points $y_1,\ldots,y_i$ are already placed and denote by $A_i$ the event that $y_{i+1}$ forms a ${\delta_0}$-unstable pair with one of $y_1,\ldots, y_i$. Then the conditional probability of $A_i$ given the location of the points $y_1,\ldots,y_i$ is at most $c_4^\p M{\delta_0}\leq c_4\(\log s\)^3{\delta_0}$ for suitable constants $c_4,c_4^\p>0$. Now observe that if $A^\p$ does not occur and there exist more than $K_3$ pairs which are ${\delta_0}$-unstable, then the event $A_i$ occurs for at least $K_3/K_2$ values of $i$. Thus, the probability of obtaining more than $K_3$ unstable pairs is at most
$$(c_1+c_2)s^{-3}+M^{K_3/K_2}c_4^{{K_3/K_2}}\(\log s\)^{3{K_3/K_2}}{\delta_0}^{{K_3/K_2}},$$ so that it suffices to choose any $K_3$ with ${\gamma_0}K_3/K_2>3$. 
\end{proof}
It is easy to visualize that a small perturbation of squares centered at the nodes of a ${\delta_0}$-unstable pair could also change the connectivity properties of neighboring squares. For instance, a small perturbation of the spatially $\delta$-unstable white squares in Figure~\ref{figFED} could lead to the disconnectedness of the two black squares even if these are not spatially $\delta$-unstable. A naive approach would be to say $\{y,y^\p,y^\pp\}$ forms a ${\delta_0}$-unstable triple if $\{y,y^\prime\}$ constitutes a ${\delta_0}$-unstable pair and $\min(\left|z-z^\pp\right|_\infty,\left|z^\p-z^\pp\right|_\infty)\leq2$, where we recall our convention that $z,z^\p$ and $z^\pp$ denote the spatial coordinates of $y,y^\p$ and $y^\pp$, respectively. However, as $\pi_1(X)\subset\T^2$ forms a homogeneous Poisson process with intensity $\lambda=50\lfloor\log s\rfloor $, at least heuristically we would expect that for a fixed ${\delta_0}$-unstable pair there exist $\asymp\log s $ nodes $y^\pp$ such that $\{y,y^\p,y^\pp\}$ forms a (naively) ${\delta_0}$-unstable triple. A direct adaptation of the coupling trick of~\cite{bbVoronoi} will not work if the number of unstable triples is that large. 
\begin{figure}[!htpb]
\hspace{1cm}
\begin{minipage}[t]{0.45\linewidth}
\centering
\begin{tikzpicture}[scale=0.4]
\clip (-5,-5) rectangle (5,6);
\fill[black!20!white]
(-2,-2) rectangle (2,2);
\draw 
(-2,-2) rectangle (2,2);
\fill[black!20!white]
(0,-1) rectangle (4,3);
\draw 
(0,-1) rectangle (4,3);
\fill[white]
(-0.5,-4) rectangle (3.5,0);
\draw 
(-0.5,-4) rectangle (3.5,0);
\fill[white]
(-1.5,1) rectangle (2.5,5);
\draw 
(-1.5,1) rectangle (2.5,5);
\end{tikzpicture}
\caption{Configuration sensitive  to small perturbation of white squares}
\label{figFED}
\end{minipage}
\begin{minipage}[t]{0.45\linewidth}
\centering
\begin{tikzpicture}[scale=0.4]
\fill[black!20!white]
(0,-4) rectangle (4,0);
\draw 
(0,-4) rectangle (4,0);
\fill[white]
(-3,-2) rectangle (1,2);
\draw 
(-3,-2) rectangle (1,2);
\fill[white]
(-2,-1) rectangle (2,3);
\draw 
(-2,-1) rectangle (2,3);
\fill[white]
(-1,-3) rectangle (3,1);
\draw 
(-1,-3) rectangle (3,1);
\begin{scope}
\clip (0,-3) rectangle (3,0);
\draw[dashed]
(0,-3.5) rectangle (3.5,0);
\end{scope}

\begin{scope}
\clip (-0.1,-4) rectangle (0,1,0);
\draw[dashed]
(0,-4) rectangle (4,0);
\end{scope}

\coordinate[label=-90:$P_0$] (b) at (0,-0);
\coordinate[label=45:$P$] (b) at (3,-0);
\coordinate[label=0:$z^\p$] (b) at (1.9,-2);
\coordinate[label=0:$z$] (b) at (1,-1);
\fill (0,-0) circle (3pt);
\fill (3,-0) circle (3pt);
\fill (2,-2) circle (3pt);
\fill (1,-1) circle (3pt);
\end{tikzpicture}
\caption{$y$ is boundary-visible from $y^\p$}
\label{figFe1}
\end{minipage}
\end{figure}

Therefore, we take only those neighbors into account whose connectivity properties could be destroyed by a small perturbation of the unstable pair. 
To be more precise, we formalize the notion of leaves visible at locations close to the boundary of another leaf.
Let $\varphi=\{x_n\}_{1\leq n\leq N}=\{(z_n,t_n,\sigma_n)\}_{1\leq n\leq N}$ be an arbitrary subset of $\T^2\times[0,\infty)\times\{\pm1\}$
and for $P\in\T^2$ denote by $\varphi_P$ the set of leaves whose interior covers the point $P$, i.e.,
$$\varphi_{P}=\{x_n\in \varphi:  P\in Q(z_n)\setminus\partial Q(z_n)\},$$
 where $\partial Q(z_n)$ denotes the topological boundary of $Q(z_n)$ in $\T^2$. For $x=(z,t,\sigma),\, x^\p=(z^\prime,t^\prime,\sigma^\prime)\in\varphi$ we say that \emph{$x$ is boundary-visible from $x^\p$ in $\varphi$} (or also that \emph{$y=(z,t)$ is boundary-visible from $y^\p=(z^\p,t^\p)$ in $\varphi$}) if there exists a corner $P_0$ of $\partial Q(z^\p)$ and a point $P\in\partial Q(z^\p)\cap \partial Q(z)$ such that $P_0\in Q(z)$ and $t=\mathsf{height}_{\varphi_{P_0}}(P)$. An illustration of this definition is shown in Figure~\ref{figFe1}, where only the right-most white square is boundary-visible from the black square. First, we note that for given $x_0$ the number of $x\in\varphi$ which are boundary-visible from $x_0$ is rather small.

\begin{lemma}
\label{tailBoundLem}
Let $X$ be as above and let $X^\p$ be a point process in $\T^2\times [0,\infty)\times \lcu\pm1\rcu$ that is defined on the same probability space as $X$ and satisfies $X^\p\subset X$ a.s.
\Fm, let $x_0\in \T^2\times[0,\lambda]\times\{\pm1\}$ be arbitrary. Then \tes a constant $c>0$ (not depending on $s$, $x_0$ or the distribution of $X^\p$) \st
$$\P\(\#\{x\in X^\p:x\text{ is boundary-visible from $x_0$ in $X^\p\cup\{x_0\}$}\}\ge c\log s/\log\log s\)\le s^{-3}.$$
\end{lemma}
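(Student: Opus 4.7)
The plan is to reduce the boundary-visibility count to a longest-increasing-subsequence (LIS) problem, one per corner of $\partial Q(z_0)$, where $x_0 = (z_0, t_0, \sigma_0)$.

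Fix a corner $P_0$ of $\partial Q(z_0)$ and consider only those leaves $x \in X$ with $P_0 \in Q(z)^\circ$; since $P_0 \in \partial Q(z_0)$, the leaf $x_0$ itself is excluded from $\varphi_{P_0}$, so $\mathsf{height}_{\varphi_{P_0}}$ depends only on such leaves in $X^\p$. For each such $x = (z, t, \sigma)$, write $z = P_0 + (a, b)$ with $a, b \in (-1/2, 1/2)$ and set $r = 1/2 - a$, $r^\p = 1/2 - b$. A short direct computation shows that $Q(z) \cap \partial Q(z_0)$ is the union of two segments of lengths $r$ and $r^\p$, emanating from $P_0$ along the two edges of $\partial Q(z_0)$ meeting at $P_0$, and that $\partial Q(z) \cap \partial Q(z_0)$ almost surely consists of exactly the two endpoints of these segments opposite $P_0$. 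Order the leaves of $X^\p$ whose interior covers $P_0$ by arrival time as $x_{i_1}, x_{i_2}, \ldots$, with reach values $(r_{i_j}, r^\p_{i_j})$. Unpacking $\mathsf{height}_{\varphi_{P_0}}$, leaf $x_{i_j}$ is boundary-visible from $x_0$ via $P_0$ if and only if $r_{i_j} > \max_{k < j} r_{i_k}$ or $r^\p_{i_j} > \max_{k < j} r^\p_{i_k}$: an earlier leaf $x_{i_k}$ covers the far endpoint on the horizontal edge exactly when $r_{i_k} \ge r_{i_j}$, and analogously for the vertical edge.

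To absorb the fact that $X^\p$ is an arbitrary, possibly adversarial, subset of $X$, I use the elementary observation that the records of any subsequence of distinct real numbers form an increasing subsequence of the full sequence, and hence their number is bounded by the LIS of the full sequence. Writing $(r_i)_{1 \le i \le N}$ and $(r^\p_i)_{1 \le i \le N}$ for the reach sequences of \emph{all} leaves of $X$ covering $P_0$, this yields that the boundary-visibility count via $P_0$ is at most $\mathrm{LIS}((r_i)) + \mathrm{LIS}((r^\p_i))$, uniformly in the choice of $X^\p$.

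Finally, conditional on $N$ and the arrival times, the positions $(z_i)$ of the $P_0$-covering leaves in $X$ are iid uniform on $P_0 + [-1/2, 1/2]^2$, so both reach sequences consist of iid uniform variables on $(0, 1)$. Moreover, $N$ is Poisson with mean $\lambda = 50 \lfloor \log s \rfloor$, so standard Poisson concentration gives $N \le C_0 \log s$ with probability at least $1 - s^{-4}$ for some $C_0$. Conditional on $N$, Markov's inequality applied to the expected number of increasing subsequences of length $k$ gives $\P[\mathrm{LIS} \ge k] \le \binom{N}{k}/k! \le (Ne^2/k^2)^k$. Substituting $N \le C_0 \log s$ and $k = c \log s / \log \log s$ with $c$ sufficiently large, the right-hand side is bounded by $s^{-4}$ for large $s$. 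A union bound over the four corners and the two reach families then yields the claim. The main obstacle is the geometric step in the second paragraph: carefully unpacking $\mathsf{height}$ for square leaves to identify boundary-visibility with the record condition on single-parameter reach variables, and in particular pinning down that $\partial Q(z_0) \cap \partial Q(z)$ consists generically of just two points. Once this is in hand, the LIS tail bound is standard Markov, and the subsequence-versus-full-sequence step is exactly what makes the estimate hold uniformly in $X^\p$.
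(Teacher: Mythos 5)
Your proof is correct and takes essentially the same route as the paper: both identify boundary-visibility through a fixed corner and edge with a record/increasing-subsequence condition (the paper phrases this as the existence of $n$ Poisson points $x_1,\ldots,x_n$ with $t_n\le\cdots\le t_1$ and $z_n^{(1)}\le\cdots\le z_1^{(1)}$, which is precisely an increasing subsequence and is automatically uniform in $X^\prime\subset X$), and both obtain a first-moment tail bound of order $(c\lambda)^n/n!^2$, which yields the claimed rate at $n\asymp\log s/\log\log s$. The only cosmetic difference is that you split the moment computation into Poisson concentration on $N$ plus a conditional binomial bound, whereas the paper applies Slivnyak--Mecke directly to get $9^n\lambda^n/n!^2$ in one step.
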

\begin{proof}
Without loss of generality, we may assume $z_0=o$, \sot $x_0=(o,t_0)$. \Fm, it suffices to prove a corresponding bound for the number $N^\p$ of $x\in X^\p$ \st $P_0=(-1/2,-1/2)$ and \st for $P= [P_0,P_0+(1,0)]\cap\partial Q(z)$ we have $t=\mathsf{height}_{X^\p_{P_0}}(P)$. \Ob that for $n\ge1$ if $N^\p\ge n$ then \te $x_1,x_2,\ldots, x_n\in X\cap Q_3(o)\times[0,\lambda]$ satisfying 
$$t_n\le t_{n-1}\le \cdots\le t_1\quad\text{ and }\quad z_n^{(1)}\le z_{n-1}^{(1)}\le \cdots\le z_1^{(1)},$$
where $z^{(1)}_i$ denotes the first coordinate of $z_i$. Therefore, using the Slivnyak-Mecke formula \wc
\begin{align*}
\P( N^\p\ge n) &\le3^n\(\int_{-3/2}^{3/2}\int_{-3/2}^{z^{(1)}_1}\cdots\int_{-3/2}^{z^{(1)}_{n-1}} 1dz_n^{(1)}\cdots dz_2^{(1)} dz_1^{(1)}\)\(\int_0^\lambda\int_0^{t_{1}}\cdots\int_{0}^{t_{n-1}} 1dt_n\cdots dt_2 dt_1\) \\
&= 3^n\(\int_0^3\int_{0}^{\xi_1}\cdots\int_{0}^{\xi_{n-1}} 1d\xi_n\cdots d\xi_2d\xi_1\)\frac{\lambda^n}{n!} \\
&={9^{n}\lambda^n}/{n!^2}.
\end{align*}
Using Stirling's formula we conclude that \tes $c_1>0$ \st the latter expression is most $c_1^n\lambda^n/n^{2n}$ for all sufficiently large $n$. \Ip, choosing $n=c\log s/\log\log s$, we obtain \fa  sufficiently large $s$ that
\begin{align*}
\P\(N_1\ge n\)&\le c_1^n\({50\(\log\log s\)^2}/({c^2\log s})\)^n\\
&=\exp\(n\(-\log\log s+\log\(50c_1/c^2\)+2\log\log\log s\)\)\\
&\le\exp\(-({n\log\log s})/{2}\).
\end{align*}
The last expression equals $\exp\(-({c\log s})/{2}\)$, \sot choosing $c=6$ proves the claim.
\end{proof}
For $\delta_0>0$ and $y\in\pi_{1,2}(\varphi)$ define $\varphi^y_{{\delta_0}}$ to be the set of all $y^\p$ in $\pi_{1,2}(\varphi)$ such that $\{y,y^\p\}$ does not form a ${\delta_0}$-unstable pair. For $x,x^\p,x^\pp\in\varphi$ we say that $\{x,x^\p,x^\pp\}$ forms a \emph{${\delta_0}$-unstable triple with respect to $\varphi$} (or also that $\{y,y^\p,y^\pp\}$ forms a \emph{${\delta_0}$-unstable triple with respect to $\varphi$}) if $\{y,y^\p\}$ forms a $\delta_0$-unstable pair and $y^\pp$ is boundary-visible from $y$ in $\varphi^{y}_{\delta_0}$. If the locally finite set $\varphi$ is understood, we also just say \emph{${\delta_0}$-unstable triple} instead of ${\delta_0}$-unstable triple with respect to $\varphi$.

By a \emph{${\delta_0}$-bad component} of $\varphi=\{x_n\}_{1\leq n\leq N}$ we denote a connected component of the graph with vertex set $\varphi$ and where an edge is drawn between $x_1,x_2\in\varphi$ if $\{x_1,x_2\}$ forms a ${\delta_0}$-unstable pair or if there exists a ${\delta_0}$-unstable triple containing $x_1$ and $x_2$. As in \cite[Lemma 6.4]{bbVoronoi} we first prove that ${\delta_0}$-bad components are rather small (to be more precise, of sub-logarithmic size).
\begin{lemma}
\label{etaBound}
Let $X$ be as above, let ${\eta,\gamma_0}>0$ be arbitrary and let ${\delta_0}={\delta_0}(s)\leq  s^{-{\gamma_0}}$. For $\eta>0$ denote by $C^{(2)}_{s,{\gamma_0},\eta}$ the event that no ${\delta_0}$-bad component of $X$ consists of more than $\eta\log s$ vertices. Then for all $\eta,\gamma_0>0$ the event $C^{(2)}_{s,{\gamma_0},\eta}$ occurs whp.
\end{lemma}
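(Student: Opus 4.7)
My plan is to enumerate spanning trees of large bad components via a Slivnyak--Mecke first-moment computation, using Lemmas~\ref{etaLem1} and~\ref{tailBoundLem} to bound the per-edge factors. Put $k=\lceil \eta \log s\rceil$; it suffices to show $\E[N_k]\to 0$, where $N_k$ denotes the number of connected subgraphs of the bad graph of size exactly $k$. Every such subgraph $V$ admits a rooted spanning tree $T$ in which each of the $k-1$ edges between parent $u$ and child $v$ is labelled either as a $\delta_0$-unstable pair or as a triple-edge accompanied by a recorded witness $w\in X$ with the property that $\{u,v,w\}$ is a $\delta_0$-unstable triple.

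The key step is to bound the integration factor contributed by each tree edge. For a pair-edge the constraint that $\{u,v\}$ is $\delta_0$-unstable restricts $v$ to a space-time region of Lebesgue measure $O(\delta_0)$. For a triple-edge one distinguishes which of the three pairs in $\{u,v,w\}$ plays the unstable-pair role in the triple; a direct case analysis shows that in every configuration one of the factors is the $O(\delta_0)$ volume of an unstable-pair region while the other is $O(\lambda)=O(\log s)$ coming from the boundary-visibility constraint, since the boundary-visible vertex is confined to a constant-size spatial region around its base while its height coordinate ranges over $[0,\lambda]$. Hence every tree edge together with its auxiliary witness contributes at most $C\delta_0\log s$ to the Slivnyak--Mecke integral.

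Combining this with the standard $k^{O(k)}$ bound on rooted trees on $k$ labelled vertices with $\{\text{pair}, \text{triple}\}$-labelled edges, and the $(10s)^2\lambda$ volume factor for the root, one obtains
\begin{equation*}
\E[N_k] \le (10s)^2\lambda \cdot k^{O(k)} \cdot (C\delta_0\log s)^{k-1}.
\end{equation*}
With $\delta_0 \le s^{-\gamma_0}$ and $k \sim \eta \log s$ this is of order $\exp(-\gamma_0\eta(\log s)^2 + O((\log s)(\log\log s)))$, which tends to zero as $s\to\infty$; summing over $k\ge\lceil \eta \log s\rceil$ preserves the conclusion by super-exponential decay, so Markov's inequality yields $\P(C^{(2)}_{s,\gamma_0,\eta})\to 1$.

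The main obstacle I expect is an honest case analysis for triple-edges (the orderings of $(x_a, x_b, x_c)$ onto $(u, v, w)$, plus avoiding double counting between pair-edges that also happen to participate in triples), but once the per-edge factor $O(\delta_0\log s)$ is established, the super-polynomial decay $s^{-\gamma_0 k}$ overrides every polylogarithmic or tree-shape factor, so the argument works uniformly for any fixed $\eta, \gamma_0 > 0$.
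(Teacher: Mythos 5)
Your approach --- a first-moment bound over connected subgraphs of the bad graph of size $k\sim\eta\log s$, enumerating rooted spanning trees with witnessed triple-edges --- is genuinely different from the paper's, which Palm-fixes a vertex and combines the two whp estimates of Lemmas~\ref{etaLem1} and~\ref{tailBoundLem} inside a $\log s$-box. Unfortunately your key claim, that every tree edge contributes at most $C\delta_0\log s$ to the Slivnyak--Mecke integral, is not achievable, and the gap is fatal.

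The witness $w$ of a triple-edge is itself a vertex of the bad component, so in a spanning tree it is often already present when the edge is processed; in that case one integrates only over the child and no fresh $\delta_0$ factor appears. Concretely, consider the \emph{star}: one unstable pair $\{a,b\}$ together with $m=k-2$ points $w_1,\dots,w_m$ all boundary-visible from $a$ in $X^{y_a}_{\delta_0}$. The spanning tree rooted at $a$ has the pair-edge $a$--$b$ (factor $O(\delta_0)$) and $m$ triple-edges $a$--$w_i$, all sharing the witness $b$; only a single $\delta_0$ can be collected. By your own heuristic (``constant spatial region $\times$ height coordinate free in $[0,\lambda]$'') each $w_i$ then contributes $O(\lambda)$, so the Slivnyak--Mecke contribution of this configuration type alone is at least of order $s^{2}\lambda\cdot\delta_0\cdot\lambda^{m}/m!$, which with $\delta_0\le s^{-\gamma_0}$, $\lambda=50\lfloor\log s\rfloor$ and $m\sim\eta\log s$ is $s^{2-\gamma_0+\eta\log(50e c/\eta)+o(1)}\to\infty$ for $\gamma_0<2$. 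So your claimed bound on $\E[N_k]$ is false, and Markov cannot be applied --- precisely in the regime of small $\gamma_0$ that the lemma must cover.

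The mechanism that would rescue a first-moment computation is the chain-structure estimate hidden in the proof of Lemma~\ref{tailBoundLem}: boundary-visible points of a fixed leaf form a monotone chain in both time and the relevant spatial coordinate, giving $\E\bigl[\tbinom{N'}{m}\bigr]\le 9^m\lambda^m/(m!)^2$. The extra $1/m!$ (a total $1/(m!)^2$) is exactly what defuses the star, and it is also the whole reason the paper replaces the naive proximity triple by boundary-visibility, as explained in the discussion around Figures~\ref{figFED}--\ref{figFe1}. Your ``$O(\lambda)$'' reasoning discards the minimisation in the definition of boundary-visibility and silently reverts to the naive triple, which the paper explicitly warns against. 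Even with the refined $1/(m!)^2$ the bookkeeping is delicate because the $k^{O(k)}$ tree/witness enumeration is of the same exponential scale. The paper's own proof avoids all of this: Palm-fix a point, restrict to a $\log s$-square, cap the number of unstable pairs by the constant $K$ of Lemma~\ref{etaLem1}, cap the boundary-visible neighbours per vertex by $O(\log s/\log\log s)$ via Lemma~\ref{tailBoundLem}, deduce that the bad component has $O(\log s/\log\log s)=o(\log s)$ vertices with failure probability $O(s^{-5/2})$, and finish with Campbell--Mecke.
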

\begin{proof}
By Palm calculus it suffices to prove the existence of a constant $c_1>0$ such that for any fixed $(z_0,t_0,\sigma_0)\in\T^2\times[0,\lambda]\times\{\pm1\}$ the probability that the ${\delta_0}$-bad component of $X\cup \{(z_0,t_0,\sigma_0)\}$ containing $(z_0,t_0,\sigma_0)$ consists of more than $\eta\log s$ vertices is in $O(s^{-5/2})$. Next, observe that to prove this statement it suffices to consider the confetti process on a $\log s$-square centered at $z_0$. Indeed, \fa $\eta>0$ sufficiently small, if the ${\delta_0}$-bad component containing $x$ consists of at most $\eta\log s$ vertices then its $d_\infty$-diameter is at most $\log s$. 

By Lemma~\ref{etaLem1} \te  constants $c_1,K>0$ \st \fa sufficiently large $s>0$ the number of $\delta_0$-unstable pairs in $\pi_{1,2}(X)\cap \(Q_{\log s}(z_0)\times[0,\lambda]\)$ is bounded above by $K$ with probability at least $ 1- c_1s^{-3}$. Furthermore, by Lemma~\ref{tailBoundLem} and Palm calculus \te $c_2,c_3>0$ \st for all $s$ sufficiently large the probability that for all $y\in \pi_{1,2}(X)\cap \(Q_{\log s}(z_0)\times[0,\lambda]\)$ \te at most $c_2\log s/\log\log s$ elements $y^\p\in \pi_{1,2}(X)$ such that $y^\p$ is boundary-visible from $y$ in $X^y_{\delta_0}$ is at least $1-c_3s^{-5/2}$. Combining the two observations completes the proof.
\end{proof}
\begin{remark}
\label{generalApproach}
The above argument could be used to show that the $\delta_0$-bad components are of order $O(\log s/\log\log s)$. This approach was motivated by a suggestion of an anonymous referee. Originally she/he noted that although the number of leaves falling into the cube $Q(o)\times[0,\lambda]$ is $\asymp\log s$, the number of leaves that are visible in the window $Q(o)$ (and not only at its boundary) should be much smaller. For instance if one could show that the probability that more than $k$ leaves are visible decays as $k^{-ck}$ then we could deduce that whp only $O(\log s/\log\log s)$ are relevant. This tail behavior is rather easily established in dimension $1$ due to the linear order of leaves. However, it seems not completely trivial to prove the analog in dimension $2$, since configurations of visible square-shaped leaves do not form a linear but a tree-like structure. In particular, if the tree structure is very balanced and binary, naive estimates do not seem strong enough to yield tail estimates strictly stronger than exponential. As hinted by the referee if this estimate could be made rigorous, it would lead to substantial simplifications of the proof.
\end{remark}

Next, we describe the construction of the coupling in Lemma~\ref{pubCoupLem}. Recall that $\delta=\delta(s)=(4\lceil s^{\gamma}/4\rceil)^{-1}$ and $\delta_1=\lceil\delta^{-1/2}\rceil^{-1}$.
Define $K_1=100s^2\lambda\delta_1^{-3}$ and subdivide $\T^2\times[0,\lambda]$ into $K_1$ cubes $Q_i$ of the form $Q_i=Q_{\delta_1}(z)\times [(k-1)\delta_1,k\delta_1]$ for some $z\in\delta_1\Z^2$ and $k\in\{1,\ldots,\lambda/\delta_1\}$. Denote by $N$ a Poisson random variable with mean $100s^2\lambda$ and independently for each $j\in\{1,\ldots,N\}$ choose an index $s_j$ uniformly at random from $\{1,\ldots,K_1\}$. Then for $i\in\{1,2\}$ the unmarked point process $Y_i=\{y_{i,1},\ldots,y_{i,N}\}\subset\T^2\times[0,\lambda]$ corresponding to $X_i$ is constructed by choosing $y_{i,j}$ uniformly at random from $Q_{s_j}$. Our task is to provide a suitable coupling of $(y_{1,j},\sigma_{1,j})$ and $(y_{2,j},\sigma_{2,j})$ for all $j\in\{1,\ldots,N\}$.

Write $\delta_2=\sqrt{\delta_1}$.
For $a,b\in\{1,\ldots,N\}$ we say that $\{a,b\}$ forms a \emph{potentially $\delta_2$-unstable pair} if it is possible to find $\wt{y}_a\in Q_{s_a}$ and $\wt{y}_b\in Q_{s_b}$ such that $\{\wt{y}_a,\wt{y}_b\}$ forms a $\delta_2$-unstable pair. Here we do not necessarily assume $\wt{y_a},\wt{y_b}\in Y_1$ or $\wt{y_a},\wt{y_b}\in Y_2$. Similarly for $a,b,c\in\{1,\ldots,N\}$ we say that $\{a,b,c\}$ forms a \emph{potentially $\delta_2$-unstable triple} if it is possible to find $\wt{y}_j\in Q_{s_j}$, $j\in\{1,\ldots,N\}$ such that $\{\wt{y}_a,\wt{y}_b,\wt{y}_c\}$ forms a $\delta_2$-unstable triple in $\{\wt{y}_j\}_{1\leq j\leq N}$. Finally, we say that $C\subset \{1,\ldots,N\}$ forms a \emph{potentially $\delta_2$-bad component} if $C$ is a connected component in the graph with vertex set $\{1,\ldots,N\}$ and where for $a,b\in\{1,\ldots,N\}$, $a$ and $b$ are connected by an edge if $\{a,b\}$ forms a potentially $\delta_2$-unstable pair or if there exists a potentially $\delta_2$-unstable triple containing both $a$ and $b$.  
\begin{lemma}
\label{potBadLem}
For $\eta>0$ denote by $C^{(3)}_{s,\eta}$ the event of the absence of potentially $\delta_2$-bad components consisting of more than $\eta\log s$ vertices. Then for every $\eta>0$ the event $C^{(3)}_{s,\eta}$ occurs whp.
\end{lemma}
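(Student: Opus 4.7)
The plan is to follow the structure of the proof of Lemma~\ref{etaBound}, replacing each instability notion by its potential counterpart with respect to the discretization cubes $Q_{s_j}$. Set $Y_1 := \pi_{1,2}(X_1)$ and $\delta_3 := \delta_2 + 2\delta_1$; since $\delta_2 = \sqrt{\delta_1}$, we have $2\delta_1 \leq \delta_2$ for $s$ large, so $\delta_3 \leq 2\delta_2 \leq s^{-\gamma_0}$ for some $\gamma_0>0$. As in Lemma~\ref{etaBound}, by Palm calculus it suffices to show that, for any fixed test location $(z_0,t_0)$, the potentially $\delta_2$-bad component of $\{1,\ldots,N\}$ containing the index of a Palm test point has size at most $\eta\log s$ with failure probability $O(s^{-5/2})$.

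First I would verify the direct translation of Lemma~\ref{etaLem1}: if $\{a,b\}$ is a potentially $\delta_2$-unstable pair of indices, then $\{y_{1,a},y_{1,b}\}$ is an actual $\delta_3$-unstable pair in $Y_1$. This follows from the robustness of the instability conditions under the $\delta_1$-perturbation inherent in passing from a witness $\widetilde{y}_a \in Q_{s_a}$, $\widetilde{y}_b \in Q_{s_b}$ to $y_{1,a},y_{1,b}$: one uses $A_{\delta_2}\oplus Q_{2\delta_1}(o) = A_{\delta_2+\delta_1}$ for the spatial condition, while the bounds $|y-y'|_\infty\leq 2+\delta_2$ and $|t-t'|\leq \delta_2$ enlarge by at most $2\delta_1$. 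Consequently the number of potentially $\delta_2$-unstable pairs of indices whose cubes lie in a $\log s$-neighborhood of $(z_0,t_0)$ is bounded by the number of actual $\delta_3$-unstable pairs of $Y_1$ in the same neighborhood, which is $O(1)$ whp by Lemma~\ref{etaLem1} applied with parameter $\delta_3$ and exponent $\gamma_0$.

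The main obstacle is the analog of Lemma~\ref{tailBoundLem}, since boundary-visibility depends on the full configuration, while a potentially $\delta_2$-unstable triple allows a separate $\widetilde{y}_j\in Q_{s_j}$ for every $j$. My plan is to bound the number of indices $c$ that can serve as the third element of a potentially $\delta_2$-unstable triple with a fixed first element $a$ by an \emph{inflated} boundary-visibility count: tally those $c$ for which there exist positions $\widetilde{y}_a\in Q_{s_a}$, $\widetilde{y}_c\in Q_{s_c}$, a corner $P_0$ of $\partial Q(\widetilde z_a)$ with $P_0\in Q(\widetilde z_c)$, and heights of the remaining perturbed leaves under which $\widetilde t_c$ is the minimum height among leaves covering $P_0$. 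The Slivnyak--Mecke computation in the proof of Lemma~\ref{tailBoundLem} bounds the probability of having at least $n$ such indices by $9^n\lambda^n/n!^2$; letting $\widetilde y_a$ range over $Q_{s_a}$ and $P_0$ over a $\delta_1$-neighborhood of its nominal corner inflates this to $(9+O(\delta_1))^n\lambda^n/n!^2$, which still yields the tail bound $O(s^{-3})$ for $n = c\log s/\log\log s$ via the same Stirling estimate.

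With these two adaptations in hand, the combinatorial argument at the end of the proof of Lemma~\ref{etaBound} applies verbatim to the potential setting: each index in the potentially $\delta_2$-bad component is connected to $O(1)$ others through potentially unstable pairs and to $O(\log s/\log\log s)$ others through potentially unstable triples, so a ball-growth estimate bounds the component size by $O(\log s/\log\log s)$, which is strictly less than $\eta\log s$ for $s$ sufficiently large, completing the proof.
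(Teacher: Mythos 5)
Your plan diverges from the paper's route: rather than directly re\mbox{-}proving a ``potential'' version of Lemma~\ref{tailBoundLem}, the paper shows that if $\{a,b,c\}$ is a potentially $\delta_2$-unstable triple (witnessed by some choice $\{\widetilde y_j\}$), then for \emph{every} choice $\{\widetilde y_j'\}$ with $\widetilde y_j'\in Q_{s_j}$ (in particular for $Y_1$ itself) the points $\widetilde y_a'$ and $\widetilde y_c'$ lie in the same \emph{actual} $(\delta_2+2\delta_1)$-bad component of $\{\widetilde y_j'\}$. It then invokes Lemma~\ref{etaBound} for $\delta_0=\delta_2+2\delta_1$ essentially unchanged. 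Your approach is a direct Slivnyak--Mecke bound on the number of potentially boundary-visible third vertices. The two routes are genuinely different in structure.

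The problem with your version lies in the claim that the computation of Lemma~\ref{tailBoundLem} merely ``inflates'' to $(9+O(\delta_1))^n\lambda^n/n!^2$. The $9^n\lambda^n/n!^2$ bound rests on the chain observation: if $N'\ge n$ then there exist points with $t_n\le\cdots\le t_1$ and $z^{(1)}_n\le\cdots\le z^{(1)}_1$. That monotone chain comes from all the boundary-visible leaves being visible \emph{in the same realization}: as $P$ moves along the fixed side of $Q(z_0)$ away from the fixed corner $P_0$, the set of leaves in $X'_{P_0}$ still covering $P$ shrinks, so the visible time is nondecreasing. In the potential setting each candidate $c$ is boundary-visible only in its \emph{own} witnessing perturbation $\{\widetilde y_j\}$, and distinct $c$'s come with distinct perturbations. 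When you try to compare $c_1$ and $c_2$ with $t_{c_1}<t_{c_2}$ by examining $c_1$ under $c_2$'s perturbation, you may find that $c_1$ fails to cover the perturbed corner $\widetilde P_0^{(2)}$ (because the perturbation moved the corner or $c_1$ by up to $\delta_1$), or that the perturbed times reorder (if $|t_{c_1}-t_{c_2}|\le 2\delta_1$). In either event the chain constraint $z^{(1)}_{c_2}\ge z^{(1)}_{c_1}-2\delta_1$ does not follow, and the Slivnyak--Mecke integral breaks. These are exactly the near-instability edge cases the paper's proof confronts head-on: it locates the actually boundary-visible $d$ under the new configuration and shows that either $d=c$, or $\{\widetilde y_c,\widetilde y_d\}$ forms a $2\delta_1$-unstable pair (so $c$ and $d$ still land in one bad component), or else one gets a contradiction to the original boundary-visibility. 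Without an analogous case analysis your inflated bound is unjustified --- and note that the sources of inflation you list (range of $\widetilde y_a$ and of the corner $P_0$) are not the dominant ones; the $2\delta_1$ slop enters through all $n$ of the $z^{(1)}_{c_i}$ and $t_{c_i}$ simultaneously, and the very existence of a near-chain, not its measure, is what needs to be argued.
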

\begin{proof}
First, observe that if $\{y_a,y_b\}$ forms a $\delta_2$-unstable pair, then for all $\wt{y_a}\in Q_{s_a}$, $\wt{y_b}\in Q_{s_b}$ the pair $\{\wt{y_a},\wt{y_b}\}$ forms a $(\delta_2+2\delta_1)$-unstable pair. \Fm, \su that $\{y_a,y_b,y_c\}$ forms a $\delta_2$-unstable triple.
\Ip, assume that $y_c$ is boundary-visible from $y_a$ in $(Y_1)^{y_a}_{\delta_2}$ and for $j\in\{1,\ldots,N\}$ let $\wt{y_j}\in Q_{s_j}$ be arbitrary. Then we claim that $\wt{y_a}$ and $\wt{y_c}$ are contained in the same $(\delta_2+2\delta_1)$-bad component with respect to $\{\wt{y_j}\}_{1\leq j\leq N}$. Using Lemma~\ref{etaBound}, this will show that whp no potentially $\delta_2$-bad component contains more than $\eta\log s$ vertices.

Without loss of generality we may assume that $z_c$ lies to the north-west of $z_a$ and that a point $P\in\partial Q(z_a)\cap\partial Q(z_c)$ satisfying the assumption in the definition of the boundary-visibility property lies on the upper horizontal boundary of $Q(z_a)$. Furthermore, write $\wt{P}=P+\pi_1(\wt{z_c}-z_c)e_1+\pi_2(\wt{z_a}-z_a)e_2$, where $e_1=(1,0)$ and $e_2=(0,1)$. First, we assert $\left|\wt{P}-\wt{z_c}\right|_\infty\leq1/2$. Indeed, as $\left|\pi_1(\wt{P}-\wt{z_c})\right|=1/2$,
\begin{align*}
\left|\wt{P}-\wt{z_c}\right|_\infty> 1/2&\iff\left|\pi_2(\wt{P}-\wt{z_c})\right|> 1/2\\
&\iff \left|\pi_2(P-z_c)+\pi_2(z_c-\wt{z_c})+\pi_2(\wt{z_a}-z_a)\right|>1/2.
\end{align*}
\Ip, $\left|\wt{P}-\wt{z_c}\right|_\infty>1/2$ implies $1/2-2\delta_1\leq \left|\pi_2(P-z_c)\right|\leq 1/2$, so that $\{z_a,z_c\}$ forms a spatially $2\delta_1$-unstable pair, thereby contradicting $y_c\in (Y_1)^{y_a}_{\delta_2}$. This completes the proof of the assertion and we may henceforth use that $\left|\wt{P}-\wt{z_c}\right|_\infty=1/2$. Similarly, $\left|\wt{P}-\wt{z_a}\right|_\infty=1/2$.

Denote by $\wt{P_0}$ the upper-left corner of $Q(\wt{z_a})$. Under these assumptions, note that if $\{\wt{y_a},\wt{y_c}\}$ does not form a $(\delta_2+2\delta_1)$-unstable pair, then the set of all $d\in\{1,\ldots,N\}$, $d\ne a$ with 
\begin{enumerate}
\item $\wt{y_d}\in(\{\wt{y_j}\}_{1\leq j\leq N})^{\wt{y_a}}_{\delta_2+2\delta_1}$,
\item $\left|\wt{P_0}-\wt{z_d}\right|_\infty\leq1/2$,
\item $\wt{t_d}\le\wt{t_c}$ and 
\item $\pi_1(\wt{z_d})\ge\pi_1(\wt{z_c})$
\end{enumerate}
is non-empty, as $d=c$ is an admissible choice. Among all these values choose $d$ \st $\wt{t_d}$ is minimal. See Figure~\ref{figFe2} for an illustration of the configuration. In particular, $\wt{y_d}$ is boundary-visible from $\wt{y_a}$ so that $\wt{y_a}$ and $\wt{y_d}$ are contained in the same $(\delta_2+2\delta_1)$-bad component. This completes the proof of the claim if $d=c$, so we may assume from now that $d\neq c$. 

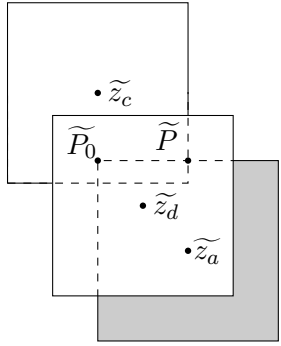
\begin{figure}[!htpb]
\centering
\begin{tikzpicture}[scale=0.6]
\fill[black!20!white]
(0,-4) rectangle (4,0);
\draw 
(0,-4) rectangle (4,0);
\fill[white]
(-2,-0.5) rectangle (2,3.5);
\draw 
(-2,-0.5) rectangle (2,3.5);
\fill[white]
(-1,-3) rectangle (3,1);
\draw 
(-1,-3) rectangle (3,1);
\begin{scope}
\clip (0,-3) rectangle (3,0);
\draw[dashed]
(0,-3.5) rectangle (3.5,0);
\end{scope}

\begin{scope}
\clip (-2,-0.6) rectangle (2,1.5);
\draw[dashed]
(-2,-0.5) rectangle (2,3.5);
\end{scope}

\coordinate[label=-90:$\wt{P_0}$] (b) at (-0.35,1.0);
\coordinate[label=135:$\wt{P}$] (b) at (2,-0);
\coordinate[label=0:$\wt{z_a}$] (b) at (1.9,-2);
\coordinate[label=0:$\wt{z_d}$] (b) at (1,-1);
\coordinate[label=0:$\wt{z_c}$] (b) at (0,1.5);
\fill (0,-0) circle (2pt);
\fill (2,-0) circle (2pt);
\fill (2,-2) circle (2pt);
\fill (1,-1) circle (2pt);
\fill (0,1.5) circle (2pt);
\end{tikzpicture}
\caption{Configuration in the proof of Lemma~\ref{potBadLem}}
\label{figFe2}
\end{figure}

If $\{\wt{y_c},\wt{y_d}\}$ forms a $2\delta_1$-unstable pair, then $\wt{y_c}$ and $\wt{y_d}$ (and thereby also $\wt{y_a}$ and $\wt{y_c}$) are contained in the same $(\delta_2+2\delta_1)$-bad component with respect to $\{\wt{y_j}\}_{1\leq j\leq N}$.  Hence, we may assume $\wt{t_d}+2\delta_1\le\wt{t_c}$ and $\pi_1(\wt{y_d})\ge \pi_1\(\wt{y_c}\)+2\delta_1$. But then $\pi_1(y_d)\ge \pi_1(y_c)$ and $t_d\le t_c$ contradicting the assumption $y_c$ is boundary-visible from $y_a$. This proves the claim that $\wt{y_a}$ and $\wt{y_c}$ are contained in the same $(\delta_2+2\delta_1)$-bad component with respect to $\{\wt{y_j}\}_{1\leq j\leq N}$.

\end{proof}
For $C$ a potentially $\delta_2$-bad component, we say that $B(C)$ occurs for $\{(y_{1,j},\sigma_{1,j})\}_{1\leq j\leq N}$ if there exist $a,b\in C$ such that $\{y_{1,a},y_{1,b}\}$ forms a $16\delta$-unstable pair. To prove Lemma~\ref{pubCoupLem} we need the following probabilistic result.
\begin{lemma}
\label{privCoupLem}
There exists a coupling between $\{(y_{1,j},\sigma_{1,j})\}_{1\leq j\leq N}$ and $\{(y_{2,j},\sigma_{2,j})\}_{1\leq j\leq N}$ such that the probability of occurrence of the following events tends to $1$ as $s\to\infty$.
\begin{enumerate}
\item $\left|y_{1,i}-y_{2,i}\right|_\infty\leq \delta_1$ for all $i\in\{1,\ldots,N\}$,
\item $\sigma_{2,i}\geq\sigma_{1,i}$ for all $i\in\{1,\ldots,N\}$,
\item if $C$ is a potentially $\delta_2$-bad component and $a,b\in C$ are \st $\sigma_{1,a}=1$ and $\sigma_{2,b}=-1$, then $B(C)$ does not occur for $\{(y_{1,j},\sigma_{1,j})\}_{1\leq j\leq N}$ and $y_{1,j}=y_{2,j}$ for all $j\in C$.
\end{enumerate}
\end{lemma}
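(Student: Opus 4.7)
I would construct the coupling in three stages: first couple the cube assignments and the colors via shared randomness, then read off the potentially $\delta_2$-bad components from the cubes, and finally choose the actual positions inside each cube according to whether the ambient bad component has an inversion. In Stage~1, draw the Poisson variable $N$ together with the cube indices $s_1,\dots,s_N$, and share them between both processes; for each $j$ draw an independent uniform $U_j\in(0,1)$ and set $\sigma_{i,j}=1$ iff $U_j\le p_i$, so that $\sigma_{2,j}\ge \sigma_{1,j}$ holds deterministically. In Stage~2, use that being a potentially $\delta_2$-unstable pair (or triple) depends only on the cube indices, so the components $C_1,\dots,C_m$ and their inversion statuses $\{\exists a\in C_k: U_a\le p_1\text{ and }\exists b\in C_k: U_b>p_2\}$ are already determined. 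In Stage~3, for every $j$ lying in a bad component with inversion, draw a single uniform position in $Q_{s_j}$ and assign it to both $y_{1,j}$ and $y_{2,j}$; for every other $j$, draw $y_{1,j}$ and $y_{2,j}$ independently, each uniform in $Q_{s_j}$.

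Both marginals are the prescribed $\{\pm1\}$-marked homogeneous Poisson processes, because positions inside each cube remain uniform and the color marginals are correct by construction. Condition (i) is automatic from $y_{1,j},y_{2,j}\in Q_{s_j}$, condition (ii) is the output of Stage~1, and the coincidence part of condition (iii) (namely $y_{1,j}=y_{2,j}$ for every $j$ in an inversion component) is enforced by Stage~3.

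The remaining, and genuinely probabilistic, task is the estimate
\[
\P\bigl(\exists\text{ bad component }C\text{ with inversion and }B(C)\text{ occurs for }Y_1\bigr)\longrightarrow 0.
\]
I would control it by first conditioning on the whp events of Lemma~\ref{potBadLem} (every bad component has at most $\eta\log s$ vertices) and Lemma~\ref{etaLem1} (any $\log s$-window contains only $O(1)$ $\delta_2$-unstable pairs). Inside a bad component $C$, the positions $\{y_{1,j}\}_{j\in C}$ are independent uniforms on their cubes, and $B(C)$ is controlled by a union over the $O(|C|)$ close pairs: for any such pair a geometric estimate bounds the probability of $16\delta$-instability by a constant times $\delta/\delta_2$, because $A_{16\delta}$ has $2$-dimensional measure $O(\delta)$ while the density of the difference $z_a-z_b$ lives on scale $\delta_1\gg\delta$. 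The hard part is making this bound sharp enough globally: if the direct union bound is too weak, I would refine Stage~3 on each inversion component by restricting to the (high-probability) event that no $16\delta$-unstable pair appears, essentially via rejection sampling, and then absorb the resulting total-variation cost using the analogue of Lemma~\ref{tailBoundLem} together with the logarithmic bound on component size. This last step --- trading a small marginal perturbation against a deterministic absence of $16\delta$-unstable pairs inside every inversion component --- mirrors the corresponding delicate point in the Bollob\'as--Riordan Voronoi coupling and is where I expect most of the technical effort to go.
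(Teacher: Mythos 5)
Your Stages 1--2 and the auxiliary whp events (Lemma~\ref{potBadLem}, Lemma~\ref{etaLem1}) agree with the paper, and your per-component estimate $\P(B(C))\lesssim(\eta\log s)^2\,\delta/\delta_1$ is the same one the paper derives. The gap is in the last step. With $\gamma$ small (which is forced, since $\gamma=(p-1/2)/c$ for a large absolute constant $c$) the quantity $\delta/\delta_1\approx s^{-\gamma/2}$ decays only polynomially with a tiny exponent, while the number of potentially $\delta_2$-bad components is of order $s^2\lambda$. The union bound $\sum_C\P\bigl(B(C)\bigr)$ therefore diverges, as you anticipate, and your fallback does not repair this: rejecting the positions in every inversion component until there is no $16\delta$-unstable pair perturbs the marginal distribution of $Y_1$ on \emph{each} component by a total-variation error of order $s^{-\gamma/3}$, and since there are $\sim s^2\lambda$ components these errors cannot be absorbed --- the product $\prod_C(1-s^{-\gamma/3})$ tends to zero, so rejection is triggered somewhere with probability going to $1$. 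More fundamentally, a coupling must have \emph{exact} marginals, and conditioned sampling of $Y_1$ inside inversion components does not.

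The missing idea is the Bollob\'as--Riordan \emph{cross-over} coupling, and it aims at a different target than the one you set yourself. You are trying to force ``$B(C)$ fails on every inversion component,'' but that cannot be done while preserving the $Y_1$-marginal. The paper instead kills the \emph{inversion} on the rare event $B(C)$: it sets $G(C)=\{\sigma_{1,j}=-1\text{ and }\sigma_{2,j}=1\ \forall j\in C\}$, chooses $\eta$ small enough that $\P(G(C))\ge(p_2-p_1)^{\eta\log s}\ge 2\,\P(B(C))$, picks $G'(C)\subset G(C)\setminus B(C)$ of measure exactly $\P(B(C))$, and composes the $X_2$-coordinate of the natural coupling with a measure-preserving involution $f_C$ that swaps $B(C)$ and $G'(C)$. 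This keeps both marginals exact. After the swap, on $B(C)$ all $\sigma_{2,j}=1$ for $j\in C$, and on $G'(C)$ all $\sigma_{1,j}=-1$ for $j\in C$; in either case there is no inversion in $C$, so condition (iii) holds vacuously there, while off $B(C)\cup G'(C)$ the natural coupling already gives $y_{1,j}=y_{2,j}$ and no $16\delta$-unstable pair. The key is the local comparison $\P(B(C))\le\P(G(C)\setminus B(C))$, not a global smallness of $\bigcup_C B(C)$.
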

\begin{proof}
Choose $\eta>0$ \st $2s^{-\gamma/3}\le s^{\eta\log(p_2-p_1)}$ holds \fa sufficiently large $s>0$, with $\gamma$ the constant used in the definition of $\delta=\delta(s)$ in the paragraph preceding Lemma~\ref{pubCoupLem}. Also assume that $C^{(3)}_{s,\eta}$ holds, where we recall from Lemma~\ref{potBadLem} that this happens whp. To construct the desired coupling between $\{(y_{1,j},\sigma_{1,j})\}_{1\leq j\leq N}$ and $\{(y_{2,j},\sigma_{2,j})\}_{1\leq j\leq N}$ we first define a preliminary version that may be considered as \emph{natural coupling}. This natural coupling is constructed simply by choosing $y_{1,j}=y_{2,j}$ to be uniformly distributed in $Q_{s_j}$. Furthermore, we choose $\sigma_{1,j}=1$ with probability $p_1$ and the value of $\sigma_{2,j}$ is determined conditionally on the value of $\sigma_{1,j}$. If $\sigma_{1,j}=1$ then we also put $\sigma_{2,j}=1$, but if $\sigma_{1,j}=-1$ then we put $\sigma_{2,j}=1$ with probability $(p_2-p_1)/(1-p_1)$. Starting from this simple coupling we construct the final coupling on each of the potentially $\delta_2$-bad components separately.

Let $C\subset\{1,\ldots,N\}$ be a potentially $\delta_2$-bad component. For any $\delta_0>0$ we denote by $F_{\delta_0}\subset \T^2\times\R$ the set of all $(z,t)\in\T^2\times\R$ such that $\{(z,t),(o,0)\}$ forms a $\delta_0$-unstable pair.
It is easy to check that there exists a constant $c_1>0$ such that for all $j\in\{1,\ldots, K_1\}$ and all $\wt{y}\in Q_{j}$ we have $\nu_3((\wt{y}+F_{16\delta})\cap Q_j)\leq c_1\delta_1^2\delta$, where $\nu_3$ denotes the Lebesgue measure in $\R^3$ and where we recall that $K_1=100s^2\lambda\delta_1^{-3}$. As $C$ consists of at most $\eta\log s $ vertices, we see that the expected number of $16\delta$-unstable pairs in $C$ is at most $\(\eta\log s\)^2\cdot c_1\delta_1^2\delta/\delta_1^3\leq s^{-\gamma/3}$ for all sufficiently large values of $s$.

On the other hand, denote by $G(C)$ the event that in the natural coupling we have $\sigma_{2,j}=1$ and $\sigma_{1,j}=-1$ for all $j\in C$, \sot
\begin{align*}
\P(G(C))&\geq (p_2-p_1)^{\eta\log s}= s^{\eta\log(p_2-p_1)}\geq 2s^{-\gamma/3}.
\end{align*}
In particular, we obtain $\P(G(C)\setminus B(C))\geq \P(B(C))$. The final coupling is now constructed by using the cross-over coupling described in \cite{bbVoronoi}. For the convenience of the reader we briefly recall this technique. 

Let $X_1^*,X_2^*$ be random variables with marginals distributed as $X_1,X_2$ and that are coupled according to the natural coupling described above. Choose $G^\p(C)\subset G(C)\setminus B(C)$ with $\P(G^\p(C))=\P(B(C))$ and a measure-preserving bijection $f_C$ that maps $B(C)\cup G^\p(C)$ to itself and where $B(C)$ is mapped into $G^\p(C)$ and vice versa. The existence of $f_C$ and $G^\p(C)$ is a consequence of the observation that our probability space is a non-atomic standard probability space (see, e.g.~\cite[pp. 42-43]{svind}).

Then we put $X_1=X_1^*$ and define $X_2$ by 
\begin{align*}
X_2(\omega)&=\begin{cases}X_2^*(\omega) &\text{ if }\omega\not\in B(C)\cup G^\p(C),\\
X_2^*(f_C(\omega)) &\text{ if } \omega\in B(C)\cup G^\p(C).
\end{cases}
\end{align*}
In particular, the final coupling has the following properties.
\begin{enumerate}
\item If $\omega\not\in B(C)\cup G^\prime(C)$, then $y_{1,j}=y_{2,j}$ for all $j\in C$ and there are no $16\delta$-unstable pairs in $C$.
\item If $\omega\in B(C)$ then $\sigma_{2,j}=1$ for all $j\in C$.
\item If $\omega\in G^\p(C)$ then $\sigma_{1,j}=-1$ for all $j\in C$.
\end{enumerate}
As the $\delta_2$-bad components define a partition of $\{1,\ldots,N\}$ and the cubes $Q_{s_j}$, $j\in\{1,\ldots,N\}$ form a subdivision of $\T^2\times[0,\lambda]$ the above construction yields the desired coupling.
\end{proof}
The second step in the proof of Lemma~\ref{pubCoupLem} is to show that given a coupling as described in Lemma~\ref{privCoupLem}, the occurrence of a $\psi_{X_1}$-black horizontal crossing of $R_1$ implies the existence of a $\psi_{X_2^{2\delta}}$-black horizontal crossing of $R_2$. After having constructed the explicit coupling this is a completely elementary geometric (i.e., deterministic) problem. Unfortunately, it turns turns out to be a rather delicate issue and the remainder of this section is devoted to its proof.

Lemma~\ref{privCoupLem} yields finite sets $\{(y_{i,n},\sigma_{i,n})\}_{1\leq n\leq N}\subset\T^2\times[0,\lambda]\times\{\pm1\}$, $i\in\{1,2\}$ \st
\begin{enumerate}
\item $\left|y_{1,j}-y_{2,j}\right|_\infty\leq\delta_1$ for all $j\in\{1,\ldots,N\}$,
\item $\sigma_{1,j}\leq\sigma_{2,j}$ for all $j\in\{1,\ldots,N\}$,
\item for all potentially $\delta_2$-bad components $C\subset\{1,\ldots,N\}$, if there exist $a,b\in C$ with $\sigma_{1,a}=1$ and $\sigma_{2,b}=-1$, then $B(C)$ does not occur for $\{(y_{1,j},\sigma_{1,j})\}_{1\leq j\leq N}$ and $y_{1,j}=y_{2,j}$ for all $j\in C$.
\end{enumerate}
Furthermore, without loss of generality, we may assume $C^{(1)}_s$, where we recall from Section~\ref{unifSec} that this event is defined by $\T^2\subset\bigcup_{z\in \pi_1(X)}{Q_{1/2}(z)}$. Our goal is to prove that properties (i)-(iii) above imply that if there exists a $\psi_{\varphi_1}$-black horizontal crossing of $R_1$, then there exists a $\psi_{\varphi_2^{2\delta}}$-black horizontal crossing of $R_2$, where $\varphi_1=\{(y_{1,n},\sigma_{1,n})\}_{1\leq n\leq N}$ and $\varphi_2=\{(y_{2,n},\sigma_{2,n})\}_{1\leq n\leq N}$. 

For $C\subset\{1,2,\ldots,N\}$ we denote by $D(C)$ the event that there exist $n_1,n_2\in C$ with $\sigma_{1,n_1}=1$ and $\sigma_{2,n_2}=-1$. Define a sequence $\Delta=\{\delta^{(n)}\}_{1\leq n\leq N}$ by $\delta^{(n)}=\delta_1+4\delta$ if $n$ is contained in a potentially $\delta_2$-bad component $C$ such that $D(C)$ does not occur and put $\delta^{(n)}=4\delta$ otherwise. By Lemma~\ref{privCoupLem} the occurrence of $D(C)$ implies that there are no $m_1,m_2\in C$ such that $\{y_{1,m_1},y_{1,m_2}\}$ forms a spatially $16\delta$-unstable pair. Next, we consider two results formalizing the intuition that $\delta^{(n)}$-instabilities can only occur under severe restrictions on the leaf colors.
\begin{lemma}
\label{badSizeLem}
Let $n_1,n_2\in\{1,\ldots,N\}$ and suppose that $\{z_{1,n_1},z_{1,n_2}\}$ forms a spatially $(2\delta^{(n_1)}+2\delta^{(n_2)})$-unstable pair. Denote by $C$ the potentially $\delta_2$-bad component containing $n_1$ and $n_2$. Then $D(C)$ does not occur. In particular, $\sigma_{1,n_1}=\sigma_{2,n_2}$. 
\end{lemma}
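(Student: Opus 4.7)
I would argue by contradiction, so suppose $D(C)$ occurs. The very definition of the sequence $\Delta=\{\delta^{(n)}\}$ then forces $\delta^{(n)}=4\delta$ for every $n\in C$ (only components $C'$ with $D(C')$ failing receive the value $\delta_1+4\delta$). Hence $2\delta^{(n_1)}+2\delta^{(n_2)}=16\delta$ and the hypothesis becomes: $\{z_{1,n_1},z_{1,n_2}\}$ forms a spatially $16\delta$-unstable pair, i.e.\ $z_{1,n_1}-z_{1,n_2}\in A_{16\delta}$, which in particular gives $|z_{1,n_1}-z_{1,n_2}|_\infty\le 2+16\delta$. At the same time, invoking property (iii) of Lemma~\ref{privCoupLem} for the coupling: under $D(C)$ we have $y_{1,j}=y_{2,j}$ for every $j\in C$ and $B(C)$ does not occur, which by definition means that no pair $\{a,b\}\subset C$ yields a $16\delta$-unstable pair $\{y_{1,a},y_{1,b}\}$.

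\textbf{Forcing the contradiction.} The natural candidate for a forbidden $16\delta$-unstable pair is $\{y_{1,n_1},y_{1,n_2}\}$ itself. Condition (ii) (spatial or temporal $16\delta$-instability) is already supplied by the hypothesis. Condition (i), namely $|y_{1,n_1}-y_{1,n_2}|_\infty\le 2+16\delta$, reduces to checking $|t_{1,n_1}-t_{1,n_2}|\le 2+16\delta$ since the spatial part is already bounded. I would establish this by walking along the chain in the graph of potentially $\delta_2$-unstable pairs/triples that witnesses membership of $n_1,n_2$ in the same component $C$: each edge confines a displacement in $\ell^\infty$ to at most $2+\delta_2+2\delta_1$, and the chain length is $O(\log s)$ by Lemma~\ref{potBadLem}, so one must extract a direct bound either by exploiting that the two squares are spatially almost tangent (the $A_{16\delta}$-condition pins the coordinate up to an integer offset $\{0,\pm1\}$), or by noting that a temporal separation larger than $2+16\delta$ would make the later leaf invisible beneath the earlier one, preventing it from being boundary-visible from any element of $C$ in the sense used to build triples. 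Once $\{y_{1,n_1},y_{1,n_2}\}$ is shown to be $16\delta$-unstable, we contradict $\neg B(C)$, so $D(C)$ cannot occur.

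\textbf{Color conclusion.} With $D(C)$ ruled out, the claim $\sigma_{1,n_1}=\sigma_{2,n_2}$ is a short combinatorial consequence of the definition $D(C)=\{\exists a,b\in C:\,\sigma_{1,a}=1,\,\sigma_{2,b}=-1\}$. Applying this (and its swap $n_1\leftrightarrow n_2$) to the pair $(n_1,n_2)\in C\times C$ forbids both $(\sigma_{1,n_1},\sigma_{2,n_2})=(1,-1)$ and $(\sigma_{1,n_2},\sigma_{2,n_1})=(1,-1)$; combining these prohibitions with the monotonicity $\sigma_{1,j}\le\sigma_{2,j}$ from property (ii) of Lemma~\ref{privCoupLem} pins all four values $\sigma_{1,n_1},\sigma_{2,n_1},\sigma_{1,n_2},\sigma_{2,n_2}$ to the same sign, which is stronger than the asserted equality.

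\textbf{Main obstacle.} The delicate point is not the coupling bookkeeping or the combinatorial color step, but the geometric upgrade from \emph{spatial} $16\delta$-instability to a genuine $16\delta$-unstable pair $\{y_{1,n_1},y_{1,n_2}\}$. Everything hinges on controlling the time coordinate $|t_{1,n_1}-t_{1,n_2}|$, and this is where the fine properties of the discretization scale hierarchy $\delta\ll\delta_1\ll\delta_2$ and of boundary-visibility inside the component $C$ must be used carefully.
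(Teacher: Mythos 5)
Your argument reproduces the paper's own two-line proof of Lemma~\ref{badSizeLem}: assume $D(C)$, conclude $\delta^{(n_1)}=\delta^{(n_2)}=4\delta$, deduce that $\{z_{1,n_1},z_{1,n_2}\}$ is spatially $16\delta$-unstable, and seek a contradiction with the coupling. The paper closes immediately by appealing to the sentence placed just before the lemma, which asserts that under $D(C)$ there is no $m_1,m_2\in C$ with $\{y_{1,m_1},y_{1,m_2}\}$ forming a \emph{spatially} $16\delta$-unstable pair; taken at face value, that hands over the contradiction with no further bookkeeping. You instead trace this back to Lemma~\ref{privCoupLem}(iii) and the definition of $B(C)$, and correctly observe that these only forbid \emph{full} $16\delta$-unstable pairs $\{y_{1,a},y_{1,b}\}$, so that the $\ell^\infty$-bound $\left|y_{1,n_1}-y_{1,n_2}\right|_\infty\leq 2+16\delta$ (equivalently, the time bound $\left|t_{1,n_1}-t_{1,n_2}\right|\leq 2+16\delta$) must still be supplied before the contradiction can be declared. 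This is a fair and careful reading of the definitions, and you are right that this is the only delicate point; your treatment of the color conclusion is also fine.

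The gap in the proposal is that this time bound is not actually established. You explicitly note that walking the chain of potentially $\delta_2$-unstable pairs/triples only yields $O(\log s)\cdot(2+\delta_2+2\delta_1)$, which is far larger than $2+16\delta$, and you offer two heuristics (near-tangency pins a coordinate to an integer offset; a too-late leaf would be invisible and hence not boundary-visible) without carrying either out. As written, the proposal therefore stops short of a completed argument at exactly the step you flagged as the main obstacle. For comparison, the paper avoids the issue by phrasing its pre-lemma observation purely in spatial terms, and when the lemma is subsequently invoked (e.g.\ inside the proof of Lemma~\ref{scott}) the required $\ell^\infty$-bound on $\left|y_{1,m}-y_{1,n}\right|_\infty$ is typically supplied by the ambient hypotheses ($\left|z_{1,m}-z_{1,n}\right|_\infty\leq 2$ together with a small time gap), which is where the time bookkeeping is actually discharged.
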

\begin{proof}
Assume the contrary, i.e., that $D(C)$ occurs. Then $\delta^{(n_1)}=\delta^{(n_2)}=4\delta$, so that $\{z_{1,n_1},z_{1,n_2}\}$ forms a spatially $16\delta$-unstable pair contradicting the observation before the lemma.
\end{proof}

\begin{lemma}
\label{scott}
Let $m,n\in\{1,\ldots,N\}$ be such that $\left|z_{1,m}-z_{1,n}\right|_\infty\leq2$ and $t_{1,m}-\delta^{(m)}<t_{1,n}+\delta^{(n)}$. Denote by $\wt{C}\subset\{1.\ldots,N\}$ the union of the potentially $\delta_2$-bad connected components containing $m$ and $n$. Furthermore, assume the existence of $m^\p,n^\p\in\wt{C}$ with $\sigma_{1,m^\p}=1$ and $\sigma_{2,n^\p}=-1$. Then $t_{1,m}<t_{1,n}$.
\end{lemma}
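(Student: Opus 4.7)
The plan is to argue by contradiction, assuming $t_{1,m}\ge t_{1,n}$, and to split into two cases according to whether $m$ and $n$ lie in the same potentially $\delta_2$-bad component or in two distinct ones. In either case, combining the spatial bound $|z_{1,m}-z_{1,n}|_\infty\le 2$ with a temporal bound extracted from the hypothesis should produce a forbidden instability.

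\textbf{Same component.} Suppose $m$ and $n$ both lie in a single potentially $\delta_2$-bad component $C_0$, so that $\wt{C}=C_0$. The existence of $m^\prime$ and $n^\prime$ is then exactly the assertion that $D(C_0)$ occurs, so by the definition of the sequence $\Delta$ we have $\delta^{(m)}=\delta^{(n)}=4\delta$, and the temporal hypothesis reduces to $t_{1,m}-t_{1,n}<8\delta$. The contradiction assumption $t_{1,m}\ge t_{1,n}$ then gives $|t_{1,m}-t_{1,n}|<16\delta$, so $\{t_{1,m},t_{1,n}\}$ is temporally $16\delta$-unstable; since $|z_{1,m}-z_{1,n}|_\infty\le 2$ automatically yields $|y_{1,m}-y_{1,n}|_\infty\le 2+16\delta$, the pair $\{y_{1,m},y_{1,n}\}$ is $16\delta$-unstable. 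Together with $m,n\in C_0$ this says $B(C_0)$ occurs for $\{(y_{1,j},\sigma_{1,j})\}_{1\le j\le N}$, contradicting Lemma~\ref{privCoupLem}(iii), which rules this out precisely because $D(C_0)$ occurs.

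\textbf{Different components.} Now suppose $m$ and $n$ lie in distinct potentially $\delta_2$-bad components. By the very definition of these components the pair $\{m,n\}$ is not potentially $\delta_2$-unstable, for otherwise the associated graph would have an edge between $m$ and $n$. Since the construction of the coupling places $y_{1,m}\in Q_{s_m}$ and $y_{1,n}\in Q_{s_n}$, the concrete pair $\{y_{1,m},y_{1,n}\}$ is in particular not a $\delta_2$-unstable pair. The trivial estimate $\delta^{(m)}+\delta^{(n)}\le 2\delta_1+8\delta$, combined with the scale hierarchy $\delta\sim s^{-\gamma}$, $\delta_1\sim \delta^{1/2}$, $\delta_2=\sqrt{\delta_1}\sim \delta^{1/4}$, shows that $\delta_2>2\delta_1+8\delta$ for all sufficiently large $s$. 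The temporal hypothesis therefore gives $t_{1,m}-t_{1,n}<\delta_2$, and under the contradiction assumption $t_{1,m}\ge t_{1,n}$ we get $|t_{1,m}-t_{1,n}|<\delta_2$. Hence $\{t_{1,m},t_{1,n}\}$ is temporally $\delta_2$-unstable, and combined with $|z_{1,m}-z_{1,n}|_\infty\le 2\le 2+\delta_2$ this promotes $\{y_{1,m},y_{1,n}\}$ to a $\delta_2$-unstable pair, contradicting what we just observed.

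The main obstacle I expect is purely bookkeeping: tracking the two possible values of $\delta^{(n)}$, verifying the scale inequality $2\delta_1+8\delta<\delta_2$ for large $s$, and carefully distinguishing a \emph{potentially} $\delta_2$-unstable pair of indices from an actual $\delta_2$-unstable pair of points. No new probabilistic input should be needed; the whole statement is a deterministic consequence of Lemma~\ref{privCoupLem}(iii) and the definitions of potentially $\delta_2$-unstable pairs and $\delta_2$-bad components.
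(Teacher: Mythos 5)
Your proof is correct and follows essentially the same route as the paper's. The paper derives $|t_{1,m}-t_{1,n}|<\delta^{(m)}+\delta^{(n)}\le\delta_2$ directly to force $m$ and $n$ into the same potentially $\delta_2$-bad component, then invokes $D(C)$ to get $\delta^{(m)}=\delta^{(n)}=4\delta$ and a forbidden $16\delta$-unstable pair; your Case 2 is just that first step rephrased as a separate contradiction, and your Case 1 is the rest verbatim.
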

\begin{proof}
Assume the contrary. Then $\left|t_{1,m}-t_{1,n}\right|\leq \delta^{(m)}+\delta^{(n)}\leq \delta_2$ so that $m$ and $n$ are contained in the same potentially $\delta_2$-bad component $C\subset\{1,\ldots,N\}$. In particular, the existence of $m^\p,n^\p\in C$ with $\sigma_{1,m^\p}=1$ and $\sigma_{2,n^\p}=-1$ implies $D(C)$. But then $\delta^{(m)}=\delta^{(n)}=4\delta$, so that $\{t_{1,m},t_{1,n}\}$ forms a temporally $8\delta$-unstable pair contradicting the discussion before Lemma~\ref{badSizeLem}.
\end{proof}
To obtain a $\psi_{\varphi_2^{2\delta}}$-black horizontal crossing of $R_2$ we construct a further coloring $\psi^\Delta$ that is black-dominated by $\psi_{\varphi_2^{2\delta}}$ and show the existence of a black horizontal crossing of $R_2$ in this new coloring. To be more precise, we define 
$$\varphi^\Delta=\{\wt{x_n}\}_{1\leq n\leq N}=\{(z_{1,n},t_{1,n}+\delta^{(n)}\sigma_{2,n},\sigma_{2,n})\}_{1\leq n\leq N},$$
 where the leaf attached to $(z_{1,n},t_{1,n}+\delta^{(n)}\sigma_{2,n},\sigma_{2,n})$ is given by $Q_{1-2\sigma_{2,n}\delta^{(n)}}(o)$ (and also put $\psi^\Delta=\psi_{\varphi^\Delta}$). In other words, the shrinking/expansion and delay/advancement is more pronounced for leaves contained in a potentially $\delta_2$-bad connected component $C$ for which the event $D(C)$ does not occur.

We claim that the coloring $\psi_{\varphi_2^{2\delta}}$ black-dominates the coloring $\psi^\Delta$, so that we only need to establish the existence of a $\psi^\Delta$-black horizontal crossing of $R_2$. To prove this claim, it suffices to show that for every $n\in\{1,\ldots,N\}$,
\begin{enumerate}
\item $Q_{1-2\delta^{(n)}}(z_{1,n})\subset Q_{1-4\delta}(z_{2,n})$ and $t_{1,n}+\delta^{(n)}\ge t_{2,n}+2\delta$ if $\sigma_{2,n}=1$, and 
\item $Q_{1+2\delta^{(n)}}(z_{1,n})\supset Q_{1+4\delta}(z_{2,n})$ and $t_{1,n}-\delta^{(n)}\le t_{2,n}-2\delta$ if $\sigma_{2,n}=-1$. 
\end{enumerate}

Denote by $C$ the potentially $\delta_2$-bad connected component containing $n$. To prove the first claim, we distinguish two cases. If $D(C)$ occurs, then $\delta^{(n)}=4\delta$ and $y_{1,n}=y_{2,n}$, \sot the claim follows from the obvious relations $Q_{1-8\delta}(z_{1,n})\subset Q_{1-4\delta}(z_{1,n})$ and $t_{1,n}+4\delta\ge t_{1,n}+2\delta$. Similarly, if $D(C)$ does not occur, then $\delta^{(n)}=\delta_1+4\delta$ and $\left|y_{1,n} -y_{2,n}\right|_\infty\le \delta_1$, \sot the claim follows from $Q_{1-2\delta_1-8\delta}(z_{1,n})\subset Q_{1-4\delta}(z_{2,n})$ and $t_{1,n}+\delta_1+4\delta\ge t_{2,n}+2\delta$. The second claim can be proven by analogous arguments.

Furthermore, for $n\in\{1,\ldots,N\}$ we write $\varphi_1(n)=\{x_{1,n}\}\cup \{x_{1,m}\in\varphi_1:\sigma_{1,m}=-1,\,t_{1,m}<t_{1,n}\}$ and similarly $\varphi^\Delta(n)=\{\wt{x_n}\}\cup \{\wt{x_m}\in\varphi^\Delta:\sigma_{2,m}=-1,\,t_{1,m}-\delta^{(m)}<t_{1,n}+\delta^{(n)}\}$ (and also put $\psi^\Delta(n)=\psi_{\varphi^\Delta(n)}$). We need a result to create $\varphi^{\Delta}(n)$-black paths from $\varphi_1(n)$-black paths.
\begin{lemma}
\label{prepLemV2}
Let $n\in\{1,\ldots,N\}$ with $\sigma_{1,n}=1$ and let $P_1,P_2\in Q_{1-2\delta^{(n)}}(z_{1,n})$ be \st 
$$\mathsf{height}_{\varphi^\Delta(n)}(P_1)=\mathsf{height}_{\varphi^\Delta(n)}(P_2)=t_{1,n}+\delta^{(n)}.$$
 Assume the existence of $P_1^\p,P_2^\p\in Q(z_{1,n})$, $m_1,m_2\in\{1,\ldots,N\}$ and $i_1,i_2\in\{1,2\}$ such that $\sigma_{1,m_1}=\sigma_{1,m_2}=1$ and \st the following items hold for all $k\in\{1,2\}$.
\begin{enumerate}
\item $P_k^\p\in\partial Q(z_{1,m_k})$ and $\left|\pi_{i_k}(P_k^\p)-\pi_{i_k}(z_{1,m_k})\right|=1/2$,
\item $\left|\pi_{i_k}(P_k^\p)-\pi_{i_k}(P_k)\right|\leq \delta^{(m_k)}$.
\end{enumerate}
Assume furthermore the existence of a $\psi_{\varphi_1(n)}$-black path $\gamma^\p:[0,1]\to\T^2$ with $\gamma^\p(0)=P_1^\p$, $\gamma^\p(1)=P_2^\p$. Then there exists a $\psi^\Delta(n)$-black path ${\gamma}:[0,1]\to\T^2$ with ${\gamma}(0)=P_1$ and ${\gamma}(1)=P_2$.
\end{lemma}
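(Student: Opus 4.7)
The plan is to build $\gamma$ as a small perturbation of $\gamma^\p$ and then translate any hypothetical $\psi^\Delta(n)$-white obstruction on $\gamma$ into a $\psi_{\varphi_1(n)}$-white obstruction on $\gamma^\p$, which is forbidden by assumption. Concretely, I construct $\gamma = \ell_1 \cdot \gamma^{\p\p} \cdot \ell_2^{-1}$, where each $\ell_k$ is a short bridge segment joining $P_k$ to $P_k^\p$ and $\gamma^{\p\p}$ is the pointwise projection of $\gamma^\p$ onto $Q_{1-2\delta^{(n)}}(z_{1,n})$. Hypothesis (ii), together with $P_k \in Q_{1-2\delta^{(n)}}(z_{1,n})$, bounds the length of $\ell_k$ in direction $i_k$ by $\delta^{(m_k)}$, while the projection shifts each point of $\gamma^\p$ by at most $2\delta^{(n)}$ in each coordinate. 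By construction $\gamma \subset Q_{1-2\delta^{(n)}}(z_{1,n})$, so the leaf $\wt{x_n}$ (note $\sigma_{2,n}=\sigma_{1,n}=1$) covers every point of $\gamma$.

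Next, I verify that $\wt{x_n}$ is the \emph{first} leaf of $\varphi^\Delta(n)$ covering each point of $\gamma$. Assume for contradiction that some white leaf $\wt{x_m} = (z_{1,m}, t_{1,m}-\delta^{(m)}\sigma_{2,m}, -1) \in \varphi^\Delta(n)$ with $\sigma_{2,m}=-1$ and $t_{1,m}-\delta^{(m)} < t_{1,n}+\delta^{(n)}$ covers a point $u$ on $\gamma$, i.e., $u \in Q_{1+2\delta^{(m)}}(z_{1,m})$. The joint containment $u \in Q_{1-2\delta^{(n)}}(z_{1,n}) \cap Q_{1+2\delta^{(m)}}(z_{1,m})$ forces $|z_{1,m}-z_{1,n}|_\infty \le 2$, and Lemma~\ref{scott} applied to $m,n$ then gives $t_{1,m} < t_{1,n}$. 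Coupling property (ii) of Lemma~\ref{privCoupLem} yields $\sigma_{1,m} \le \sigma_{2,m}=-1$, so $x_{1,m}$ is a white element of $\varphi_1(n)$.

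It remains to produce a point $u^* \in \gamma^\p$ with $u^* \in Q(z_{1,m})$, so that $u^*$ is $\psi_{\varphi_1(n)}$-white, contradicting $\psi_{\varphi_1(n)}$-blackness of $\gamma^\p$. There are two cases. If $\{z_{1,m}, z_{1,n}\}$ forms a spatially $(2\delta^{(m)}+2\delta^{(n)})$-unstable pair, Lemma~\ref{badSizeLem} already gives $\sigma_{1,n}=\sigma_{2,m}=-1$, contradicting $\sigma_{1,n}=1$ outright. Otherwise, the pair is stable at that scale, which constrains $z_{1,m}-z_{1,n}$ so that the intersection $Q_{1-2\delta^{(n)}}(z_{1,n}) \cap Q_{1+2\delta^{(m)}}(z_{1,m})$ is contained in the interior of $Q(z_{1,m})$ with margin exceeding the $2\delta^{(n)}$ projection shift; hence the preimage $u^* \in \gamma^\p$ of $u$ under the projection defining $\gamma^{\p\p}$ still lies in $Q(z_{1,m})$.

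The main obstacle is the geometric bookkeeping in the last paragraph: extracting the precise margin from $z_{1,m}-z_{1,n} \notin A_{2\delta^{(m)}+2\delta^{(n)}}$ to guarantee that the unprojected preimage lands inside $Q(z_{1,m})$, and separately handling points $u$ that lie on a bridge segment $\ell_k$ rather than on $\gamma^{\p\p}$. For the bridges, hypothesis (i) (that $P_k^\p$ lies on a specific side of $\partial Q(z_{1,m_k})$ so that $|\pi_{i_k}(P_k^\p)-\pi_{i_k}(z_{1,m_k})|=1/2$) together with $\sigma_{1,m_k}=1$ allows the same Lemma~\ref{scott}/Lemma~\ref{badSizeLem} dichotomy to be run with $n$ replaced by $m_k$, so the white-leaf obstruction is pulled back to an obstruction near the endpoint $P_k^\p$ of $\gamma^\p$, closing the case.
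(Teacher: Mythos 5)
Your approach is genuinely different from the paper's, and it contains a gap that I do not think can be closed along the route you sketch.

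The paper argues by contradiction at the level of a \emph{separator}: if no $\psi^\Delta(n)$-black path joins $P_1$ to $P_2$, then a union of at most two or three enlarged white leaves $Q_{1+2\delta^{(\cdot)}}(z_{1,\cdot})$ runs from the north-east to the south-west corner of $Q(z_{1,n})$; the proof then uses Lemma~\ref{badSizeLem}, the boundary-visibility of a third leaf $m_0$ at the corner, and hypotheses~(i)--(ii) on $P_k^\p$ to show that this separator also disconnects $P_1^\p$ from $P_2^\p$ in $\psi_{\varphi_1(n)}$, contradicting the existence of $\gamma^\p$. You instead try to construct $\gamma$ directly by projecting $\gamma^\p$ pointwise into $Q_{1-2\delta^{(n)}}(z_{1,n})$ and pulling any hypothetical $\psi^\Delta(n)$-white obstruction back to a $\psi_{\varphi_1(n)}$-white point of $\gamma^\p$.

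The step that fails is exactly the "margin" claim you flag. Take $v\in\gamma^\p$ with projection $v'$ and suppose $v'\in Q_{1+2\delta^{(m)}}(z_{1,m})$ for a $\psi^\Delta(n)$-white $\wt{x_m}$. Since the projection moves each coordinate by at most $\delta^{(n)}$, one only gets $|v-z_{1,m}|_\infty\le 1/2+\delta^{(m)}+\delta^{(n)}$, never $|v-z_{1,m}|_\infty\le 1/2$: the preimage need not lie in $Q(z_{1,m})$. Worse, the dichotomy you propose does not always fire. If the projection changes coordinate $1$ (so $|\pi_1(v-z_{1,n})|>1/2-\delta^{(n)}$) while $v$ grazes $\partial Q(z_{1,m})$ in coordinate $2$ (so $|\pi_2(v-z_{1,m})|=1/2+\epsilon$ with $0<\epsilon\le\delta^{(m)}$), then $\pi_2(v')=\pi_2(v)$, so $v'\in Q_{1+2\delta^{(m)}}(z_{1,m})$ forces only $\epsilon\le\delta^{(m)}$; meanwhile $v\notin Q(z_{1,m})$, and nothing in this configuration places a coordinate of $z_{1,m}-z_{1,n}$ within $2\delta^{(n)}+2\delta^{(m)}$ of $0,\pm 1$ ($\pi_2(z_{1,m}-z_{1,n})$ can sit anywhere in $(\epsilon,1+\epsilon)$ and $\pi_1(z_{1,m}-z_{1,n})$ is essentially unconstrained). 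So neither branch of "unstable pair or white preimage" holds, and the contradiction you want does not materialize.

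This is precisely the corner configuration the paper's proof is built to handle: there, the relevant instability is \emph{not} between $z_{1,n}$ and the blocking white square, but between the two separator squares $m_\mathsf{ne},m_\mathsf{sw}$ (or, in the case $|z_{1,m_\mathsf{sw}}-z_{1,m_\mathsf{ne}}|_\infty\ge 1$, a $\delta_2$-unstable \emph{triple} obtained via boundary-visibility of a third leaf $m_0$ at the corner $P_0$), together with the instability between the endpoint squares $m_k$ and the separator that hypotheses~(i)--(ii) make available. Your projection-and-preimage scheme has no mechanism to surface a third leaf or an unstable triple, which is why the needed margin is not available from the spatial stability of $\{z_{1,n},z_{1,m}\}$ alone. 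The treatment of the bridge segments $\ell_k$ ("run the same dichotomy with $n$ replaced by $m_k$") inherits the same cross-coordinate problem and is not a proof. If you want to pursue a construct-the-path strategy rather than the paper's separator argument, you would at minimum need to route $\gamma$ adaptively around white leaves using boundary-visible triples (essentially re-deriving Lemmas~\ref{prepLemV2} and~\ref{prepLemV3} in a different guise), rather than applying a rigid global projection.
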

\begin{proof}
Without loss of generality we may assume that $P_1$ lies to the north-west of $P_2$. Assume the assertion of the lemma was wrong. This implies that there exist two white leaves covering the north-east and the south-west corner of the square $Q(z_{1,n})$ and that prevent the existence of a $\psi^\Delta(n)$-black path connecting $P_1$ and $P_2$. To be more precise, we obtain the existence of $m_\mathsf{ne},m_\mathsf{sw}\in\{1,\ldots, N\}$ with
\begin{enumerate} 
\item$\sigma_{2,m_\mathsf{ne}}=\sigma_{2,m_\mathsf{sw}}=-1$,
\item$\max(t_{1,m_\mathsf{ne}}-\delta^{(m_\mathsf{ne})},t_{1,m_\mathsf{sw}}-\delta^{(m_\mathsf{sw})})<t_{1,n}+\delta^{(n)}$,
\item$z_{1,m_\mathsf{ne}}-(\delta^{(m_\mathsf{ne})}+1/2)(e_1+e_2)\in\(\pi_1(P_1),\pi_1(P_2)\)\times\(\pi_2(P_2),\pi_2(P_1)\)$,
\item$z_{1,m_\mathsf{sw}}+(\delta^{(m_\mathsf{sw})}+1/2)(e_1+e_2)\in\(\pi_1(P_1),\pi_1(P_2)\)\times\(\pi_2(P_2),\pi_2(P_1)\)$,
\item$\left|z_{1,m_\mathsf{ne}}-z_{1,m_\mathsf{sw}}\right|_\infty<1+\delta^{(m_\mathsf{ne})}+\delta^{(m_\mathsf{sw})}$.
\end{enumerate}
See Figure~\ref{prepLemUU2}, left, after replacing the squares $Q^{}(z_{1,m_\mathsf{ne}})$ and $Q^{}(z_{1,m_\mathsf{sw}})$ by $Q_{1+2\delta^{(m_\mathsf{ne})}}(z_{1,m_\mathsf{ne}})$ and $Q_{1+2\delta^{(m_\mathsf{sw})}}(z_{1,m_\mathsf{sw}})$, respectively. \Ob that by Lemma~\ref{scott} we have $\max(t_{1,m_\mathsf{ne}},t_{1,m_\mathsf{sw}})<t_{1,n}$. We distinguish two cases.

First, suppose $\left|z_{1,m_\mathsf{sw}}-z_{1,m_\mathsf{ne}}\right|_\infty<1$, see Figure~\ref{prepLemUU2}, left. We claim that $P_1$ and $P_1^\p$ are contained in the same connected component of $Q(z_{1,n})\setminus(Q(z_{1,m_\mathsf{ne}})\cup Q(z_{1,m_\mathsf{sw}}))$. Observe that this set consists of precisely two connected components since we assumed $\left|z_{1,m_\mathsf{sw}}-z_{1,m_\mathsf{ne}}\right|_\infty<1$. Let us assume the contrary and furthermore -- without loss of generality -- that $i_1=1$. Since $P_1^\p$ lies in the same connected component of $Q(z_{1,n})\setminus(Q(z_{1,m_\mathsf{ne}})\cup Q(z_{1,m_\mathsf{sw}}))$ as $P_2$ we have
\begin{align*}
\pi_1(P_1^\p)-(\pi_1(z_{1,m_\mathsf{ne}})-1/2)&\geq\pi_1(P_1^\p)-(\pi_1(z_{1,m_\mathsf{sw}})+1/2)\geq0,
\end{align*}
and also
\begin{align*}
\pi_1(P_1^\p)-(\pi_1(z_{1,m_\mathsf{ne}})-1/2)=\pi_1(P_1^\p)-\pi_1(P_1)+\pi_1(P_1)-(\pi_1(z_{1,m_\mathsf{ne}})-1/2)\leq \delta^{(m_1)}+0.
\end{align*}
Hence, $\{z_{1,m_1},z_{1,m_\mathsf{ne}}\}$ forms a spatially $\delta^{(m_1)}$-unstable pair, thereby contradicting Lemma~\ref{badSizeLem}. Therefore, $P_1$ and $P_1^\p$ lie in the same connected component of $Q(z_{1,n})\setminus(Q(z_{1,m_\mathsf{ne}})\cup Q(z_{1,m_\mathsf{sw}}))$. Of course, the same is true for $P_2$ and $P_2^\p$. But this yields a contradiction to the existence of a $\psi_{\varphi_1(n)}$-black path connecting $P_1^\p$ and $P_2^\p$.

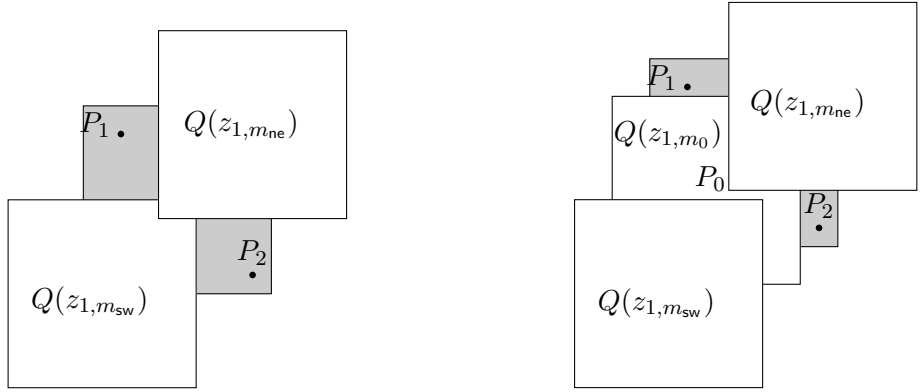
\begin{figure}[!htpb]
\hspace{1cm}
\begin{minipage}[t]{0.45\linewidth}
\centering
\begin{tikzpicture}[scale=0.25]
\fill[black!20!white]
(-5,-5) rectangle (5,5);
\draw 
(-5,-5) rectangle (5,5);
\fill[white]
(-9,-10) rectangle (1,0);
\draw (-9,-10) rectangle (1,0);
\fill[white]
(-1,-1) rectangle (9,9);
\draw (-1,-1) rectangle (9,9);
\coordinate[label=180:$P_1$] (b) at (-3,4);
\fill (-3,3.5) circle (5pt);
\coordinate[label=90:$P_2$] (b) at (4,-4);
\fill (4,-4) circle (5pt);
\coordinate[label=180:$Q(z_{1,m_\mathsf{sw}})$] (b) at (-1,-5.4);
\coordinate[label=180:$Q(z_{1,m_\mathsf{ne}})$] (b) at (7,4);
\end{tikzpicture}
\end{minipage}
\begin{minipage}[t]{0.45\linewidth}
\centering
\begin{tikzpicture}[scale=0.25]
\fill[black!20!white]
(-5,-5) rectangle (5,5);
\draw 
(-5,-5) rectangle (5,5);
\fill[white]
(-7,-7) rectangle (3,3);
\draw (-7,-7) rectangle (3,3);
\fill[white]
(-9,-12.5) rectangle (1,-2.5);
\draw (-9,-12.5) rectangle (1,-2.5);
\fill[white]
(-0.8,-2) rectangle (9.2,8);
\draw (-0.8,-2) rectangle (9.2,8);
\coordinate[label=180:$P_1$] (b) at (-3,4);
\fill (-3,3.5) circle (5pt);
\coordinate[label=90:$P_2$] (b) at (4,-4);
\coordinate[label=-135:$P_0$] (b) at (-0.4,-0.2);
\fill (4,-4) circle (5pt);
\coordinate[label=180:$Q(z_{1,m_\mathsf{sw}})$] (b) at (-1,-8);
\coordinate[label=180:$Q(z_{1,m_\mathsf{ne}})$] (b) at (7,2.6);
\coordinate[label=180:$Q(z_{1,m_0})$] (b) at (-0.6,1);
\end{tikzpicture}
\end{minipage}
\caption{Configurations in the proof of Lemma~\ref{prepLemV2}}
\label{prepLemUU2}
\end{figure}

The reasoning in the case $\left|z_{1,m_\mathsf{sw}}-z_{1,m_\mathsf{ne}}\right|_\infty\geq1$ is very similar, but we provide the details for the convenience of the reader. The first important observation is that $\{z_{1,m_\mathsf{ne}},z_{1,m_\mathsf{sw}}\}$ forms a spatially $(\delta^{(m_\mathsf{ne})}+\delta^{(m_\mathsf{sw})})$-unstable pair. Next, we denote by $P_0$ the south-west corner of the square $Q(z_{1,m_\mathsf{ne}})$. Then there exists a unique $m_0\in\{1,\ldots, N\}$,  such that $t_{1,m_0}=\mathsf{height}_{(\varphi_1)^{y_{1,m_\mathsf{ne}}}_{\delta_2}\setminus\{x_{1,m_\mathsf{ne}}\}}(P_0)$. \Ip, $y_{1,m_{0}}$ is boundary-visible from $y_{1,m_\mathsf{ne}}$ (apply the definition of boundary-visibility to the corner $P_0$ and any point $P\in\partial Q(z_{m_{1,ne}})\cap \partial Q(z_{1,m_0})$). For an illustration of the situation see Figure~\ref{prepLemUU2}, right. By Lemma~\ref{badSizeLem} we may assume $\sigma_{1,m_0}=-1$. %
Note that $Q(z_{1,n})\setminus(Q(z_{1,m_0})\cup Q(z_{1,m_\mathsf{ne}})\cup Q(z_{1,m_\mathsf{sw}}))$ consists of at most two connected components and we first assume that $P_1^\p$ and $P_2^\p$ both lie in the connected component containing the south-east corner of $Q(z_{1,n})$.
Then we have again
\begin{align*}
\pi_1(P_1^\p)-\pi_1(z_{1,m_\mathsf{ne}})+1/2\geq\pi_1(P_1^\p)-(\min\(\pi_1(z_{1,m_\mathsf{sw}}),\pi_1(z_{1,m_0})\)+1/2)-\delta^{(m_\mathsf{ne})}-\delta^{(m_\mathsf{sw})},
\end{align*}
and the latter expression is at least $-\delta^{(m_\mathsf{ne})}-\delta^{(m_\mathsf{sw})}$. \Fm, note
\begin{align*}
\pi_1(P_1^\p)-\pi_1(z_{1,m_\mathsf{ne}})+1/2=\pi_1(P_1^\p)-\pi_1(P_1)+\pi_1(P_1)-(\pi_1(z_{1,m_\mathsf{ne}})-1/2)
\leq \delta^{(m_1)}+0.
\end{align*}
From $\delta^{(m_\mathsf{ne})}=\delta^{(m_\mathsf{sw})}$ we conclude that $\{z_{1,m_\mathsf{ne}},z_{1,m_1}\}$ forms a spatially $(2\delta^{(m_1)}+2\delta^{(m_\mathsf{ne})})$-unstable pair, thereby contradicting Lemma~\ref{badSizeLem}. Since analogous arguments can be used to arrive at a contradiction in the case, where $P_1^\p$ and $P_2^\p$ both lie in the connected component containing the north-west corner of $Q(z_{1,n})$, this concludes the proof.
\end{proof}
We finally need a result allowing us to glue together paths obtained from Lemma~\ref{prepLemV2}.
\begin{lemma}
\label{prepLemV3}
Let $n_1,n_2\in\{1,\ldots,N\}$ be such that $\sigma_{1,n_1}=\sigma_{1,n_2}=1$ and $t_{1,n_1}<t_{1,n_2}$. Furthermore, let $P\in\partial Q(z_{1,n_1})$ with $\mathsf{height}_{\varphi_1}(P)=t_{1,n_1}$ and $\mathsf{height}_{\varphi_1\setminus\{x_{1,n_1}\}}(P)=t_{1,n_2}$. Then there exist $P_i\in\R^2$, $i\in\{1,2\}$ with the following properties.
\begin{enumerate}
\item  $P_i\in Q_{1-2\delta^{(n_i)}}(z_{1,n_i})$ \fa $i\in\{1,2\}$,
\item  $P_i$ is $\psi^\Delta(n_i)$-black \fa $i\in\{1,2\}$,
\item $\left|\pi_k(P_i)-\pi_k(P)\right|\leq \delta^{(n_1)}$ holds \fa $i\in\{1,2\}$, where the index $k\in\{1,2\}$ is such that $\left|\pi_k(P)-\pi_k(z_{1,n_1})\right|=1/2$,
\item the linear segment $[P_1,P_2]$ is $\psi^\Delta$-black.
\end{enumerate}
\end{lemma}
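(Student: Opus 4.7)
By the symmetry of $Q(z_{1,n_1})$ under rotations by $\pi/2$ and reflections, I reduce to the case that $P$ lies on the right vertical edge of $Q(z_{1,n_1})$, so the relevant coordinate in (iii) is $k=1$ and $\pi_1(P) - \pi_1(z_{1,n_1}) = 1/2$. The height hypothesis then forces $P \in Q(z_{1,n_2})$ with both $x_{1,n_1}$ and $x_{1,n_2}$ black in $\varphi_1$. Take $P_1 := P - \delta^{(n_1)} e_1$, which lies on the right vertical edge of $Q_{1-2\delta^{(n_1)}}(z_{1,n_1})$, so (i) and (iii) hold for $i=1$. For $P_2$, push $P$ into $Q_{1-2\delta^{(n_2)}}(z_{1,n_2})$ by moving it by at most $\delta^{(n_2)}$ in each coordinate, favoring the $\pi_2$-direction whenever possible so as to stay within the bound $\delta^{(n_1)}$ in the first coordinate. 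In the worst sub-case one has $|\pi_1(P) - \pi_1(z_{1,n_2})| > 1/2 - \delta^{(n_2)}$; but then $|\pi_1(z_{1,n_1}) - \pi_1(z_{1,n_2})| < \delta^{(n_2)}$ together with $|\pi_2(z_{1,n_1}) - \pi_2(z_{1,n_2})| \leq 1$ places $\{z_{1,n_1}, z_{1,n_2}\}$ in a spatial $\delta^{(n_2)}$-unstable pair. Lemma~\ref{badSizeLem} then merges $n_1, n_2$ into a common potentially $\delta_2$-bad component, and by the very definition of $\delta^{(\cdot)}$ this gives $\delta^{(n_1)} = \delta^{(n_2)}$, restoring (iii).

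\textbf{Blackness at $P_1$ and $P_2$.} To verify (ii) at $P_1$, suppose some $\tilde{x}_m \in \varphi^\Delta$ with $\sigma_{2,m} = -1$ satisfies $t_{1,m} - \delta^{(m)} < t_{1,n_1} + \delta^{(n_1)}$ and $P_1 \in Q_{1+2\delta^{(m)}}(z_{1,m})$. Since $P$ is $\psi_{\varphi_1}$-black at height exactly $t_{1,n_1}$, one of two things happens: either $(a)$ $P \notin Q(z_{1,m})$, in which case the bounded shift $|P - P_1|_\infty \leq \delta^{(n_1)}$ forces $\{z_{1,m}, z_{1,n_1}\}$ to be spatially $(2\delta^{(m)} + 2\delta^{(n_1)})$-unstable; or $(b)$ $P \in Q(z_{1,m})$ and $t_{1,m} > t_{1,n_1}$, which directly contradicts Lemma~\ref{scott} applied with the pair $(m, n_1)$ using the witnesses $\sigma_{1,n_1} = 1$ and $\sigma_{2,m} = -1$. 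In case $(a)$, Lemma~\ref{badSizeLem} puts $m$ and $n_1$ in a common bad component $C$; the same witnesses trigger $D(C)$, forcing $\delta^{(n_1)} = \delta^{(m)} = 4\delta$, so that the $16\delta$-instability now produced contradicts the absence of $B(C)$ built into the coupling of Lemma~\ref{privCoupLem}. The analogous argument with $n_1$ replaced by $n_2$ handles $P_2$.

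\textbf{The segment and main obstacle.} For (iv), every $\zeta \in [P_1, P_2]$ lies in $Q_{1-2\delta^{(n_1)}}(z_{1,n_1}) \cup Q_{1-2\delta^{(n_2)}}(z_{1,n_2})$: points on the $P_1$-side of the intersection lie in the first shrunken square, points on the $P_2$-side in the second, a short convexity check using that $P$ belongs to the overlap of the original squares. Applying the bad-component argument of the previous paragraph with whichever $n_i$ owns $\zeta$ rules out any competing white perturbed leaf of $\varphi^\Delta$ that could cover $\zeta$ at an earlier adjusted time. The main obstacle threading through the whole proof is the possible discrepancy $\delta^{(n_1)} \neq \delta^{(n_2)}$, which can undermine both the geometric placement of $P_2$ and the exclusion of white competitors along the segment; the saving grace, invoked repeatedly, is the dichotomy that any would-be violation forces a $\delta_2$-instability which in turn collapses the relevant indices into a common bad component where the $\delta^{(\cdot)}$-values coincide and the required slack reappears.
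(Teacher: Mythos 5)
Your construction runs into a genuine gap at step (ii), the blackness of $P_1$ (and analogously $P_2$). You set $P_1=P-\delta^{(n_1)}e_1$, with the push purely in the horizontal direction. Your dichotomy (a)/(b) considers an obstructing white leaf $m$ (with $\sigma_{2,m}=-1$, $P_1\in Q_{1+2\delta^{(m)}}(z_{1,m})$, $t_{1,m}-\delta^{(m)}<t_{1,n_1}+\delta^{(n_1)}$) and claims in case (a), $P\notin Q(z_{1,m})$, that the pair $\{z_{1,m},z_{1,n_1}\}$ must be spatially unstable. That is false when $P$ avoids $Q(z_{1,m})$ only because of the \emph{vertical} coordinate. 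Concretely: if $|\pi_2(P)-\pi_2(z_{1,m})|\in(1/2,1/2+\delta^{(m)}]$ while $|\pi_1(P_1)-\pi_1(z_{1,m})|\le 1/2+\delta^{(m)}$, then $P_1\in Q_{1+2\delta^{(m)}}(z_{1,m})$ and $P\notin Q(z_{1,m})$, yet since $\pi_2(P_1)=\pi_2(P)$ the difference $\pi_2(z_{1,m})-\pi_2(z_{1,n_1})$ can be anywhere in $[0,1+\delta^{(m)}]$, nowhere near $\{0,\pm1\}$, and $\pi_1(z_{1,m})-\pi_1(z_{1,n_1})$ is unconstrained as well. No spatial instability follows, Lemma~\ref{badSizeLem} does not apply, and your coupling contradiction collapses. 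This is precisely the situation the paper is forced to confront: a white leaf whose horizontal boundary segment lies within $O(\delta)$ of $P$ (from below or above) blocks the naive choice of $P_1$.

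The paper's proof therefore does something qualitatively different: it splits into cases~1a, 1b, 2a, 2b, 2c depending on whether such a ``blocking'' white index $n_3$ (and in case~2, a second blocker $n_4$) exists, and whether $\{z_{1,n_1},z_{1,n_2}\}$ is spatially unstable. When a blocker exists, $P_1,P_2$ are not placed within $O(\delta)$ of $P$ at all; they are moved far in the \emph{vertical} direction (e.g.\ to height $\pi_2(z_{1,n_3})+1/2-\delta^{(n_3)}$ in case~1b, or $\pi_2(z_{1,n_3})+1/2+2\delta^{(n_3)}$ in case~2b), exploiting the fact that condition~(iii) of the lemma only restricts the coordinate $k$ perpendicular to the edge on which $P$ sits, and that the boundary-visibility mechanism places the blocker in the same potentially $\delta_2$-bad component. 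Your proposal never leaves the $\delta^{(n_i)}$-neighborhood of $P$, so it cannot handle this configuration; moreover the ``short convexity check'' you invoke for property~(iv) does not engage with these displaced placements, and the bad-component collapse you rely on (``the required slack reappears'') does not fire because no unstable pair has been exhibited. You would need to reintroduce the $n_3,n_4$ mechanism (or something equivalent) to close the argument.
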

\begin{proof}
Without loss of generality we may assume that $z_{1,n_2}$ lies to the south-east of $z_{1,n_1}$ and that $P$ lies on the right vertical boundary of $Q(z_{1,n_1})$. We distinguish $5$ cases. The first two cases are devoted to configurations, where we can use arguments exploiting the $(\delta^{(n_1)}+\delta^{(n_2)})$-instability of $\{z_{1,n_1},z_{1,n_2}\}$ and the remaining cases consider configurations, where this property does not occur necessarily.
\paragraph{Case $1a$.} Denote by $P_0=z_{1,n_1}+e_1/2-e_2/2$ the south-eastern corner of the square $Q(z_{1,n_1})$. First, assume that $\{z_{1,n_1},z_{1,n_2}\}$ forms a spatially $(\delta^{(n_1)}+\delta^{(n_2)})$-unstable pair. Assume additionally that there does \emph{not} exist $n_3\in\{1,\ldots,N\}$ with $t_{1,n_3}<t_{1,n_2}$, $\left|\pi_1(P)-\pi_1(z_{1,n_3})\right|\leq1/2$, $\pi_2(z_{1,n_3})+1/2\in (\pi_2(z_{1,n_1})-1/2,\pi_2(P))$ and such that $\{z_{1,n_1},z_{1,n_3}\}$ does \emph{not} form a $(\delta^{(n_1)}+\delta^{(n_3)})$-unstable pair. For instance, the non-existence of $n_3$ is always satisfied if $\pi_2(z_{1,n_1})>\pi_2(z_{1,n_2})+1-\delta^{(n_1)}-\delta^{(n_2)}$. In particular, (due to $C^{(1)}_s$) \tes $n_4\in\{1,\ldots,N\}$ with $t_{1,n_4}= \mathsf{height}_{(\varphi_1)^{y_{1,n_1}}_{\delta_2}\setminus\{x_{1,n_1}\}}(P_0)$. Hence, $y_{1,n_4}$ is boundary-visible from $y_{1,n_1}$ and $\{z_{1,n_1},z_{1,n_2},z_{1,n_4}\}$ are contained in the same potentially $\delta_2$-bad connected component. \Ip, we conclude from Lemma~\ref{badSizeLem} that $\sigma_{2,n_4}=1$. For $j\in\{1,2\}$ define $\xi_j=(1-2\cdot1_{\pi_j(P_0)>\pi_j(z_{1,n_2})})$. Moreover, we put $P_1=(\pi_1(P_0)-\delta^{(n_1)})e_1+(\pi_2(P_0)+\delta^{(n_1)})e_2$ and $P_2=(\pi_1(P_0)+\xi_1\delta^{(n_1)})e_1+(\pi_2(P_0)+\xi_2\delta^{(n_1)})e_2$. See Figure~\ref{figpUL2a} for an illustration.
\begin{enumerate}
\item $P_i\in Q_{1-2\delta^{(n_i)}}(z_{1,n_i})$ is clear.
\item To show that $P_i$ is $\psi^\Delta(n_i)$-black assume for the sake of contradiction the existence of $m\in\{1,\ldots,N\}$ with $t_{1,m}-\delta^{(m)}<t_{1,n_i}+\delta^{(n_i)}$, $\sigma_{2,m}=-1$ and $P_i\in Q_{1+2\delta^{(m)}}(z_{1,m})$. Observe that by Lemma~\ref{scott} we have $t_{1,m}<t_{1,n_i}$. By Lemma~\ref{badSizeLem} and the non-existence of $n_3$, we see that this is impossible.
\item $\left|\pi_1(P)-\pi_1(P_i)\right|\leq \delta^{(n_1)}$ is clear.
\item  Suppose we could find $P^\p\in [P_1,P_2]$ and $m\in\{1,\ldots,N\}$ with $t_{1,m}-\delta^{(m)}<t_{1,n_4}+\delta^{(n_4)}$, $\sigma_{2,m}=-1$ and such that $P^\p\in Q_{1+2\delta^{(m)}}(z_{1,m})$. As $n_2,n_4$ are contained in the same potentially $\delta_2$-bad component we conclude from Lemma~\ref{scott} that $t_{1,m}<t_{1,n_4}$. 
But then either $\{z_{1,m},z_{1,n_1}\}$ forms a spatially $\delta_2$-unstable pair (yielding a contradiction to Lemma~\ref{badSizeLem}) or we obtain a contradiction to the assumption $t_{1,n_4}= \mathsf{height}_{(\varphi_1)^{y_{1,n_1}}_{\delta_2}\setminus\{x_{1,n_1}\}}(P_0)$.
\end{enumerate}

\begin{figure}[!htpb]
\centering
\begin{tikzpicture}[scale=0.25]
\fill[black!20!white]
(-2,-13) rectangle (8,-3);
\draw 
(-2,-13) rectangle (8,-3);
\fill[black!20!white]
(4.5,-7) rectangle (14.5,3);
\draw 
(4.5,-7) rectangle (14.5,3);
\fill[black!20!white]
(-5,-5) rectangle (5,5);
\draw 
(-5,-5) rectangle (5,5);
\fill (9.5,-2) circle (5pt);
\fill (5,-1) circle (5pt);
\fill (0,-0) circle (5pt);
\fill (5,-5) circle (5pt);
\fill (3,-8) circle (5pt);
\fill (6,-4) circle (5pt);
\fill (4,-4) circle (5pt);
\coordinate[label=0:$z_{1,n_2}$] (b) at (9.5,-2);
\coordinate[label=-90:$z_{1,n_4}$] (b) at (3,-8);
\coordinate[label=45:$P$] (b) at (5,-1);
\coordinate[label=-90:$z_{1,n_1}$] (b) at (0,-0);
\coordinate[label=-10:${P_0}$] (b) at (5,-5);
\coordinate[label=-180:$P_1$] (b) at (4,-4);
\coordinate[label=0:$P_2$] (b) at (6,-4);
\end{tikzpicture}
\caption{Configuration in case $1a$ }
\label{figpUL2a}
\end{figure}
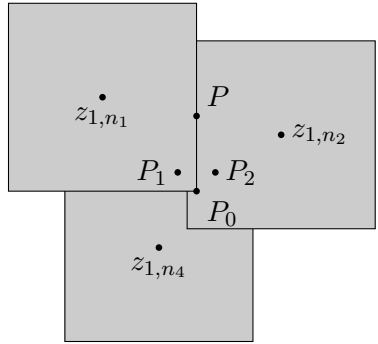

\paragraph{Case $1b$.} We assume $\pi_1(z_{1,n_2})-\pi_1(z_{1,n_1})\in (0,\delta^{(n_1)}+\delta^{(n_2)})\cup(1-\delta^{(n_1)}-\delta^{(n_2)},1)$. However, now assume additionally the existence of $n_3\in\{1,\ldots,N\}$ with $t_{1,n_3}<t_{1,n_2}$, $\left|\pi_1(P)-\pi_1(z_{1,n_3})\right|\leq1/2$, $\pi_2(z_{1,n_3})+1/2\in (\pi_2(z_{1,n_1})-1/2,\pi_2(P))$ and such that $\{z_{1,n_1},z_{1,n_3}\}$ does \emph{not} form a $(\delta^{(n_1)}+\delta^{(n_3)})$-unstable pair. Among all these values choose $n_3$ with the property that $\pi_2(z_{1,n_3})$ is maximal. Observe that $\{z_{1,n_1},z_{1,n_3}\}$ either forms a spatially $\delta_2$-unstable pair or $\{y_{1,n_1},y_{1,n_2},y_{1,n_3}\}$ forms a $\delta_2$-unstable triple. Therefore, Lemma~\ref{badSizeLem} implies $\sigma_{2,n_3}=1$. Define $\xi_1=(1-2\cdot1_{\pi_1(P_0)>\pi_1(z_{1,n_2})})$. Moreover, we put $P_1=(\pi_1(P)-\delta^{(n_1)})e_1+(\pi_2(z_{1,n_3})+1/2-\delta^{(n_3)})e_2$ and $P_2=(\pi_1(P)+\xi_1\delta^{(n_1)})e_1+(\pi_2(z_{1,n_3})+1/2-\delta^{(n_3)})e_2$. See Figure~\ref{figpUL2b} for an illustration.

\begin{enumerate}
\item $P_i\in Q_{1-2\delta^{(n_i)}}(z_{1,n_i})$ is clear.
\item To show that $P_i$ is $\psi^\Delta(n_i)$-black assume for the sake of contradiction the existence of $m\in\{1,\ldots,N\}$ with $t_{1,m}-\delta^{(m)}<t_{1,n_i}+\delta^{(n_i)}$, $\sigma_{2,m}=-1$ and $P_i\in Q_{1+2\delta^{(m)}}(z_{1,m})$. Observe that by Lemma~\ref{scott} we have $t_{1,m}<t_{1,n_2}$. By Lemma~\ref{badSizeLem} and the choice of $n_3$, we see that this is only possible if $\pi_2(z_{1,m})\in(\pi_2(z_{1,n_3})-\delta^{(m)}-\delta^{(n_3)},\pi_2(z_{1,n_3}))$. But then again $n_1,n_2,n_3$ and $m$ are contained in the same potentially $\delta_2$-bad connected component $C$, thereby yielding a contradiction to Lemma~\ref{badSizeLem}. 
\item $\left|\pi_1(P)-\pi_1(P_i)\right|\leq\delta^{(n_1)}$ is clear.
\item Suppose we could find $P^\p\in [P_1,P_2]$ and $m\in\{1,\ldots,N\}$ with $t_{1,m}-\delta^{(m)}<t_{1,n_3}+\delta^{(n_3)}$, $\sigma_{2,m}=-1$ and $P^\p\in Q_{1+2\delta^{(m)}}(z_{1,m})$. As $n_2,n_3$ are contained in the same potentially $\delta_2$-bad component we conclude from Lemma~\ref{scott} that $t_{1,m}<t_{1,n_3}$. Again by Lemma~\ref{badSizeLem} and the choice of $n_3$, we see that this is only possible if $\pi_2(z_{1,m})\in (\pi_2(z_{1,n_3})-(\delta^{(n_3)}+\delta^{(m)}),\pi_2(z_{1,n_3}))$. But again this would yield a contradiction to Lemma~\ref{badSizeLem}, since then $n_1,n_2,n_3,m$ would be contained in the same potentially $\delta_2$-bad component. 
\end{enumerate}

\begin{figure}[!htbp]
\centering
\begin{tikzpicture}[scale=0.25]
\fill[black!20!white]
(4.5,-7) rectangle (14.5,3);
\draw 
(4.5,-7) rectangle (14.5,3);
\fill[black!20!white]
(-2,-13) rectangle (8,-3);
\draw 
(-2,-13) rectangle (8,-3);
\fill[black!20!white]
(-5,-5) rectangle (5,5);
\draw 
(-5,-5) rectangle (5,5);
\fill (9.5,-2) circle (5pt);
\fill (5,-1) circle (5pt);
\fill (0,-0) circle (5pt);
\fill (3,-8) circle (5pt);
\fill (6,-4) circle (5pt);
\fill (4,-4) circle (5pt);
\coordinate[label=0:$z_{1,n_2}$] (b) at (9.5,-2);
\coordinate[label=-90:$z_{1,n_3}$] (b) at (3,-8);
\coordinate[label=45:$P$] (b) at (5,-1);
\coordinate[label=-90:$z_{1,n_1}$] (b) at (0,-0);
\coordinate[label=-180:$P_1$] (b) at (4,-4);
\coordinate[label=0:$P_2$] (b) at (6,-4);
\end{tikzpicture}
\caption{Configuration in case $1b$ }
\label{figpUL2b}
\end{figure}
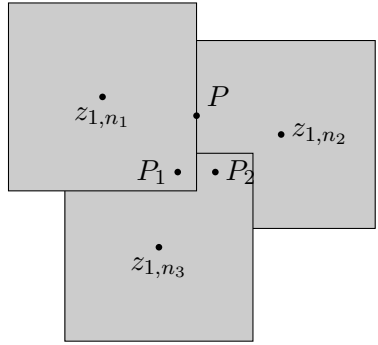
\paragraph{Case $2a$.} Henceforth, we may assume $\pi_1(z_{1,n_2})-\pi_1(z_{1,n_1})\in (\delta^{(n_1)}+\delta^{(n_2)},1-\delta^{(n_1)}-\delta^{(n_2)})$ and -- due to case $1a$ -- that $\pi_2(z_{1,n_1})-1/2<\pi_2(z_{1,n_2})+1/2-\delta^{(n_1)}-\delta^{(n_2)}$. Furthermore, assume that there does \emph{not} exist $n_3\in\{1,\ldots,N\}$ with $\sigma_{2,n_3}=-1$, $t_{1,n_3}<t_{1,n_2}$, $\left|\pi_1(P)-\pi_1(z_{1,n_3})\right|\leq1/2$, $\pi_2(z_{1,n_3})+1/2\in (\pi_2(z_{1,n_1})-1/2,\pi_2(P))$ and such that $\{z_{1,n_1},z_{1,n_3}\}$ does \emph{not} form a $(\delta^{(n_1)}+\delta^{(n_3)})$-unstable pair. Observe that in contrast to cases $1a$ and $1b$, we require also $\sigma_{2,n_3}=-1$. Then we define $P_1=(\pi_1(P)-\delta^{(n_1)})e_1+(\pi_2(z_{1,n_1})-1/2+\delta^{(n_1)})e_2$ and $P_2=(\pi_1(P)+\delta^{(n_1)})e_1+(\pi_2(z_{1,n_1})-1/2+\delta^{(n_1)})e_2$. See Figure~\ref{figpUL2c} for an illustration.
\begin{enumerate}
\item $P_1\in Q_{1-2\delta^{(n_1)}}(z_{1,n_1})$ is clear and  $P_2\in Q_{1-2\delta^{(n_2)}}(z_{1,n_2})$ follows from our assumption $\pi_2(z_{1,n_1})-1/2+\delta^{(n_1)}<\pi_2(z_{1,n_2})+1/2-\delta^{(n_2)}$. For the proof of assertion (iv) below it is also useful to note that $P_1\in Q_{1-2\delta^{(n_2)}}(z_{1,n_2})$.
\item To show that $P_i$ is $\psi^\Delta(n_i)$-black assume for the sake of contradiction the existence of $m\in\{1,\ldots,N\}$ with $t_{1,m}-\delta^{(m)}<t_{1,n_i}+\delta^{(n_i)}$, $\sigma_{2,m}=-1$ and $P_i\in Q_{1+2\delta^{(m)}}(z_{1,m})$. Observe that by Lemma~\ref{scott} we have $t_{1,m}<t_{1,n_2}$. By Lemma~\ref{badSizeLem} and the non-existence of $n_3$, we see that this is impossible.
\item $\left|\pi_1(P)-\pi_1(P_i)\right|\leq\delta^{(n_1)}$ is clear.
\item Suppose we could find $P^\p\in [P_1,P_2]$ and $m\in\{1,\ldots,N\}$ with $t_{1,m}-\delta^{(m)}<\max(t_{1,n_1}+\delta^{(n_1)},t_{1,n_2}+\delta^{(n_2)})$, $\sigma_{2,m}=-1$ and $P^\p\in Q_{1+2\delta^{(m)}}(z_{1,m})$. We conclude from Lemma~\ref{scott} that $t_{1,m}<t_{1,n_2}$. Again by Lemma~\ref{badSizeLem} and the non-existence of $n_3$, we see that this is impossible.
\end{enumerate}

\begin{figure}[!htpb]
\centering
\begin{tikzpicture}[scale=0.25]
\fill[black!20!white]
(2,-7) rectangle (12,3);
\draw 
(2,-7) rectangle (12,3);
\fill[black!20!white]
(-5,-5) rectangle (5,5);
\draw 
(-5,-5) rectangle (5,5);
\fill (7,-2) circle (5pt);
\fill (5,-1) circle (5pt);
\fill (0,-0) circle (5pt);
\fill (6,-4) circle (5pt);
\fill (4,-4) circle (5pt);
\coordinate[label=0:$z_{1,n_2}$] (b) at (7,-2);
\coordinate[label=45:$P$] (b) at (5,-1);
\coordinate[label=-90:$z_{1,n_1}$] (b) at (0,-0);
\coordinate[label=-180:$P_1$] (b) at (4,-4);
\coordinate[label=0:$P_2$] (b) at (6,-4);
\end{tikzpicture}
\caption{Configuration in case $2a$ }
\label{figpUL2c}
\end{figure}
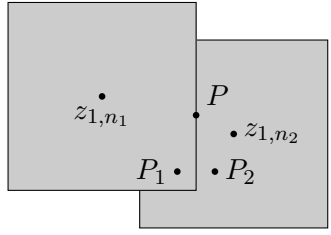
\newpage
\paragraph{Case $2b$.} Henceforth, we may assume $\pi_1(z_{1,n_2})-\pi_1(z_{1,n_1})\in (\delta^{(n_1)}+\delta^{(n_2)},1-\delta^{(n_1)}-\delta^{(n_2)})$ and -- due to case $1a$ -- that $\pi_2(z_{1,n_1})-1/2<\pi_2(z_{1,n_2})+1/2-\delta^{(n_1)}-\delta^{(n_2)}$.
 Furthermore, we may assume the existence of $n_3\in\{1,\ldots,N\}$ with $\sigma_{2,n_3}=-1$, $t_{1,n_3}<t_{1,n_2}$, $\left|\pi_1(P)-\pi_1(z_{1,n_3})\right|\leq1/2$, $\pi_2(z_{1,n_3})+1/2\in (\pi_2(z_{1,n_1})-1/2,\pi_2(P))$ and such that $\{z_{1,n_1},z_{1,n_3}\}$ does \emph{not} form a $(\delta^{(n_1)}+\delta^{(n_3)})$-unstable pair. Among all those values choose $n_3$ such that $\pi_2(z_{1,n_3})+\delta^{(n_3)}$ is maximal. Furthermore, in case $2b$ we also assume that there does \emph{not} exist $n_4\in\{1,\ldots,N\}$ with $\sigma_{2,n_4}=-1$, $t_{1,n_4}<t_{1,n_2}$, $\left|\pi_1(P)-\pi_1(z_{1,n_4})\right|\leq1/2$, $\pi_2(z_{1,n_4})-1/2\in (\pi_2(z_{1,n_3})+1/2,\pi_2(z_{1,n_3})+1/2+2\delta^{(n_3)}+2\delta^{(n_4)})$ and such that $\{z_{1,n_1},z_{1,n_4}\}$ does \emph{not} form a $(\delta^{(n_1)}+\delta^{(n_4)})$-unstable pair. Then we define $P_1=(\pi_1(P)-\delta^{(n_1)})e_1+(\pi_2(z_{1,n_3})+1/2+2\delta^{(n_3)})e_2$ and $P_2=(\pi_1(P)+\delta^{(n_1)})e_1+(\pi_2(z_{1,n_3})+1/2+2\delta^{(n_3)})e_2$. See Figure~\ref{figpUL2d} for an illustration.
\begin{enumerate}
\item $P_i\in Q_{1-2\delta^{(n_i)}}(z_{1,n_i})$ is clear, as otherwise $\{z_{1,n_2},z_{1,n_3}\}$ would form a spatially $2\delta^{(n_2)}+2\delta^{(n_3)}$ unstable pair, contradicting Lemma~\ref{badSizeLem}.
\item To show that $P_i$ is $\psi^\Delta(n_i)$-black assume for the sake of contradiction the existence of $m\in\{1,\ldots,N\}$ with $t_{1,m}-\delta^{(m)}<t_{1,n_i}+\delta^{(n_i)}$, $\sigma_{2,m}=-1$ and $P_i\in Q_{1+2\delta^{(m)}}(z_{1,m})$. Observe that by Lemma~\ref{scott} we have $t_{1,m}<t_{1,n_2}$. By Lemma~\ref{badSizeLem}, the choice of $n_3$ and the non-existence of $n_4$, we see that this is impossible.
\item $\left|\pi_1(P)-\pi_1(P_i)\right|\leq\delta^{(n_1)}$ is clear.
\item Suppose we could find $P^\p\in [P_1,P_2]$ and $m\in\{1,\ldots,N\}$ with $t_{1,m}-\delta^{(m)}<\max(t_{1,n_1}+\delta^{(n_1)},t_{1,n_2}+\delta^{(n_2)})$, $\sigma_{2,m}=-1$ and $P^\p\in Q_{1+2\delta^{(m)}}(z_{1,m})$. We conclude from Lemma~\ref{scott} that $t_{1,m}<t_{1,n_2}$. Again by Lemma~\ref{badSizeLem}, the choice of $n_3$ and the non-existence of $n_4$, we see that this is impossible.
\end{enumerate}

\begin{figure}[!htbp]
\centering
\begin{tikzpicture}[scale=0.205]
\fill[black!20!white]
(2,-7) rectangle (12,3);
\draw 
(2,-7) rectangle (12,3);
\fill[white]
(-2,-13.5) rectangle (8,-3.5);
\draw 
(-2,-13.5) rectangle (8,-3.5);
\fill[black!20!white]
(-5,-5) rectangle (5,5);
\draw 
(-5,-5) rectangle (5,5);
\fill (7,-2) circle (5pt);
\fill (5,-1) circle (5pt);
\fill (0,-0) circle (5pt);
\fill (3,-8) circle (5pt);
\fill (5.5,-2.5) circle (5pt);
\fill (4.5,-2.5) circle (5pt);
\coordinate[label=0:$z_{1,n_2}$] (b) at (7,-2);
\coordinate[label=-90:$z_{1,n_3}$] (b) at (3,-8);
\coordinate[label=45:$P$] (b) at (5,-1);
\coordinate[label=-90:$z_{1,n_1}$] (b) at (0,-0);
\coordinate[label=-100:$P_1$] (b) at (4.5,-2.5);
\coordinate[label=-80:$P_2$] (b) at (5.5,-2.5);
\end{tikzpicture}
\caption{Configuration in case $2b$  }
\label{figpUL2d}
\end{figure}
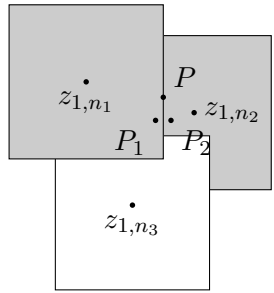

\paragraph{Case $2c$. } We can now tackle the remaining case. By the previous cases we may work under the following assumptions
\begin{itemize}
\item $\pi_1(z_{1,n_2})-\pi_1(z_{1,n_1})\in (\delta^{(n_1)}+\delta^{(n_2)},1-\delta^{(n_1)}-\delta^{(n_2)})$.
\item There exists $n_3\in\{1,\ldots,N\}$ with $t_{1,n_3}<t_{1,n_2}$, $\sigma_{2,n_3}=-1$, $\left|\pi_1(P)-\pi_1(z_{1,n_3})\right|\leq1/2$, $\pi_2(z_{1,n_3})+1/2\in (\pi_2(z_{1,n_1})-1/2,\pi_2(P))$ and such that $\{z_{1,n_1},z_{1,n_3}\}$ does \emph{not} form a $(\delta^{(n_1)}+\delta^{(n_3)})$-unstable pair. 
\item There exists $n_4\in\{1,\ldots,N\}$ with $t_{1,n_4}<t_{1,n_2}$, $\sigma_{2,n_4}=-1$, $\left|\pi_1(P)-\pi_1(z_{1,n_4})\right|\leq1/2$, $\pi_2(z_{1,n_4})-1/2\in (\pi_2(P),\pi_2(z_{1,n_2})+1/2)$.
\item Furthermore, if we choose $n_3$ such that $\pi_2(z_{1,n_3})+\delta^{(n_3)}$ is maximal and $n_4$ such that $\pi_2(z_{1,n_4})-\delta^{(n_4)}$ is minimal, then $\pi_2(z_{1,n_4})-\pi_2(z_{1,n_3})\leq 1+2\delta^{(n_3)}+2\delta^{(n_4)}$.
\end{itemize}
See Figure~\ref{figpUL2e} for an illustration. Observe that $\{z_{1,n_3},z_{1,n_4}\}$ forms a spatially $(2\delta^{(n_3)}+2\delta^{(n_4)})$-unstable pair. Furthermore, we also make the following definitions
\begin{itemize}
\item Choose $n_3^\p\in\{1,\ldots,N\}$ with $t_{1,n_3^\p}<t_{1,n_2}$, $\left|\pi_1(P)-\pi_1(z_{1,n_3^\p})\right|\leq1/2$, $\pi_2(z_{1,n_3^\p})+1/2\in [\pi_2(z_{1,n_3})+1/2,\pi_2(P))$ and $\pi_2(z_{1,n_3^\p})$ is maximal.
\item Choose $n_4^\p\in\{1,\ldots,N\}$ with $t_{1,n_4^\p}<t_{1,n_2}$, $\left|\pi_1(P)-\pi_1(z_{1,n_4^\p})\right|\leq1/2$, $\pi_2(z_{1,n_4^\p})-1/2\in (\pi_2(P),\pi_2(z_{1,n_4})-1/2]$ and $\pi_2(z_{1,n_4^\p})$ is minimal.
\end{itemize}
As $n_3$ and $n_4$ have the properties required in the definition of $n_3^\p$ respectively $n_4^\p$, this definition is indeed reasonable, i.e., we are not choosing $n_3^\p$ or $n_4^\p$ from an empty set of possible values. 
Finally, in this situation $y_{1,n_3}$, $y_{1,n_3^\p}$, $y_{1,n_4}$, $y_{1,n_4^\p}$ and $y_{1,n_1}$ are contained in the same $\delta_2$-bad connected component, contradicting Lemma~\ref{badSizeLem}.
\begin{figure}[!htbp]
\centering
\begin{tikzpicture}[scale=0.25]
\fill[black!20!white]
(2,-7) rectangle (12,3);
\draw 
(2,-7) rectangle (12,3);
\fill[white]
(-2,-11.5) rectangle (8,-1.5);
\draw 
(-2,-11.5) rectangle (8,-1.5);
\fill[white]
(-2,-0.5) rectangle (8,9.5);
\draw 
(-2,-0.5) rectangle (8,9.5);
\fill[black!20!white]
(-5,-5) rectangle (5,5);
\draw 
(-5,-5) rectangle (5,5);
\fill (7,-2) circle (5pt);
\fill (5,-1) circle (5pt);
\fill (0,-0) circle (5pt);
\fill (3,-6.5) circle (5pt);
\fill (3,4.5) circle (5pt);
\coordinate[label=0:$z_{1,n_2}$] (b) at (7,-2);
\coordinate[label=-90:$z_{1,n_3}$] (b) at (3,-6.5);
\coordinate[label=90:$z_{1,n_4}$] (b) at (3,4.5);
\coordinate[label=45:$P$] (b) at (5,-1);
\coordinate[label=-90:$z_{1,n_1}$] (b) at (0,-0);
\end{tikzpicture}
\caption{Configuration in case $2c$  }
\label{figpUL2e}
\end{figure}
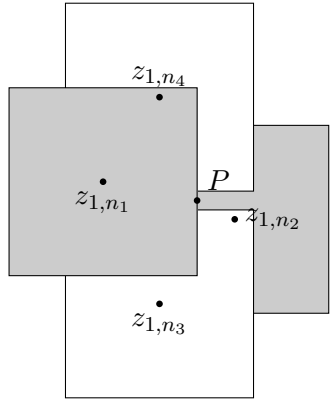

\end{proof}
After these preliminaries the existence of $\psi^\Delta$-black crossings of $R_2$ is rather immediate. Indeed, suppose that $\Gamma$ is a $\psi_{\varphi_1}$-black horizontal crossing of $R_1$. Subdivide $\Gamma$ into closed sub-paths $\Gamma_1,\ldots,\Gamma_r$ such that if we denote by $\Gamma^{o}_i$ the subset of $\Gamma_i$ obtained by deleting its two endpoints, then there exist $(z_{n_1},t_{n_1}),\dots,(z_{n_r},t_{n_r})\in \pi_{1,2}(X)$ with the property that $\mathsf{height}_{\varphi_1}(\Gamma^o_i)=\{t_{1,n_i}\}$ for all $i\in \{1,\ldots,r\}$, see also Figure~\ref{figSNB}. Let us write $\{P_i\}=\Gamma_i\cap\Gamma_{i+1}$. Applying Lemma~\ref{prepLemV3} to $P_i$ and choosing $j_0\in\{0,1\}$ with $\mathsf{height}_{\varphi_1}(P_i)=t_{1,n_{i+j_0}}$, we obtain for $j\in\{0,1\}$ points $P_{i,j}\in \mathsf{height}_{{\varphi_1^\Delta}}^{-1}(t_{1,n_{i+j}}+\delta^{(n_{i+j})})$ with the following properties:
\begin{itemize}
\item 
$\left|\pi_k(P_{i,j})-\pi_k(P_i)\right|\leq \delta^{(n_{i+j_0})}$ for all $j\in\{0,1\}$, where the index $k\in\{1,2\}$ is such that $\left|\pi_k(P_i)-\pi_k(z_{1,n_{i+j_0}})\right|=1/2$. 
\item The linear segment $[P_{i,0},P_{i,1}]$ is $\psi^\Delta$-black.
\end{itemize}
In particular, we may apply Lemma~\ref{prepLemV2} (with $n=i+1$, $P_1=P_{i,1}$, $P_2=P_{i+1,0}$, $P_1^\p=P_i$, $P_2^\p=P_{i+1}$) to obtain $\psi^\Delta$-black paths from $P_{i,1}$ to $P_{i+1,0}$. Using the $\psi^\Delta$-blackness of the joining segments $[P_{i,0},P_{i,1}]$ we can create a $\psi^\Delta$-black path starting from $P_{1,0}$ and ending at $P_{r,1}$.
\begin{figure}[!htpb]
\centering
\begin{tikzpicture}[scale=0.4]
\clip (-4,-4) rectangle (4,4);
\fill[black!20!white]
(-5,-5) rectangle (5,5);
\draw 
(-5,-5) rectangle (5,5);
\fill[black!20!white]
(1,-3) rectangle (7,3);
\draw 
(1,-3) rectangle (7,3);
\fill[black!20!white]
(-3,1) rectangle (5,7);
\draw 
(-3,1) rectangle (5,7);
\fill[white]
(-7,-5) rectangle (-1,3);
\draw 
(-7,-5) rectangle (-1,3);
\fill[white]
(-3,-7) rectangle (5,-1.7);
\draw 
(-3,-7) rectangle (5,-1.7);
\draw (0,4) .. controls (2,2) and (-2,1) .. (0.5,0);
\draw (0.5,0) .. controls (0,0) and (0,-1) .. (5,0);
\draw[decorate, decoration=brace] (1.5,4)--(1.5,1);
\draw[decorate, decoration=brace] (1,-0.5)--(-0.3,-0.5);
\draw[decorate, decoration=brace] (5,-0.5)--(1,-0.5);
\coordinate[label=0:$\Gamma_i$] (b) at (1.5,2);
\coordinate[label=0:$\Gamma_{i+1}$] (b) at (-1.1,-1.3);
\coordinate[label=0:$\Gamma_{i+2}$] (b) at (1.1,-1.3);
\fill (-0.3,1) circle (5pt);
\fill (1,-0.4) circle (5pt);
\end{tikzpicture}
\caption{Construction of the paths $\Gamma_i$ }
\label{figSNB}
\end{figure}
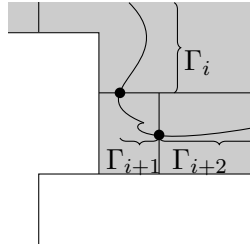
\subsection*{Acknowledgments}
The author is grateful for the detailed reports by all anonymous referees that helped to substantially improve the quality of earlier versions of the manuscript. In particular, one referee proposed the very interesting Remark~\ref{generalApproach}.
The author thanks V.~Schmidt for interesting discussions and helpful remarks. 
Furthermore, the author thanks D.~Jeulin for his lecture on the dead leaves model during the 2011 stochastic geometry summer school that marked the starting point for the current research. This work has been supported by a research grant from DFG Research Training Group 1100 at Ulm University.
\bibliography{template}

\begin{thebibliography}{10}

\bibitem{bakNew}
J.~Bak and D.~J. Newman.
\newblock {\em Complex Analysis}.
\newblock Springer, New York, second edition, 1997.

\bibitem{balint09}
A.~B{\'a}lint, F.~Camia, and R.~Meester.
\newblock Sharp phase transition and critical behaviour in 2{D} divide and
  colour models.
\newblock {\em Stochastic Processes and their Applications}, 119:937--965,
  2009.

\bibitem{balint10}
A.~B{\'a}lint, F.~Camia, and R.~Meester.
\newblock The high temperature {I}sing model on the triangular lattice is a
  critical {B}ernoulli percolation model.
\newblock {\em Journal of Statistical Physics}, 139:122--138, 2010.

\bibitem{beardon}
A.~F. Beardon.
\newblock {\em Complex Analysis}.
\newblock J.~Wiley \& Sons, Chichester, 1979.

\bibitem{confetti}
I.~Benjamini and O.~Schramm.
\newblock Exceptional planes of percolation.
\newblock {\em Probability Theory and Related Fields}, 111:551--564, 1998.

\bibitem{bbVoronoi}
B.~Bollob{\'a}s and O.~Riordan.
\newblock The critical probability for random {V}oronoi percolation in the
  plane is 1/2.
\newblock {\em Probability Theory and Related Fields}, 136:417--468, 2006.

\bibitem{bbBook}
B.~Bollob{\'a}s and O.~Riordan.
\newblock {\em Percolation}.
\newblock Cambridge University Press, New York, 2006.

\bibitem{bbSharp}
B.~Bollob{\'a}s and O.~Riordan.
\newblock Sharp thresholds and percolation in the plane.
\newblock {\em Random Structures Algorithms}, 29:524--548, 2006.

\bibitem{bbJM}
B.~Bollob{\'a}s and O.~Riordan.
\newblock Percolation on random {J}ohnson--{M}ehl tessellations and related
  models.
\newblock {\em Probability Theory and Related Fields}, 140:319--343, 2008.

\bibitem{bbError}
B.~Bollob{\'a}s and O.~Riordan.
\newblock Erratum to: Percolation on random {J}ohnson--{M}ehl tessellations and
  related models.
\newblock {\em Probability Theory and Related Fields}, 146:567--570, 2010.

\bibitem{bordDL}
C.~Bordenave, Y.~Gousseau, and F.~Roueff.
\newblock The dead leaves model: a general tessellation modeling occlusion.
\newblock {\em Advances in Applied Probability}, 38:31--46, 2006.

\bibitem{pp1}
D.~J. Daley and D.~D. Vere-Jones.
\newblock {\em An Introduction to the Theory of Point Processes {I/II}}.
\newblock Springer, New York, 2005/2008.

\bibitem{friedgutKalai}
E.~Friedgut and G.~Kalai.
\newblock Every monotone graph property has a sharp threshold.
\newblock {\em Proceedings of the American Mathematical Society},
  124:2993--3002, 1996.

\bibitem{2dUnique}
A.~Gandolfi, M.~Keane, and L.~Russo.
\newblock On the uniqueness of the infinite occupied cluster in dependent
  two-dimensional site percolation.
\newblock {\em Annals of Probability}, 16:1147--1157, 1988.

\bibitem{Grim99}
G.~R. Grimmett.
\newblock {\em Percolation}.
\newblock Springer, New York, second edition, 1999.

\bibitem{harrisPhd}
M.~Harris.
\newblock {\em Coin Tossing, Random Mirrors and Dependent Percolation: Three
  Paradigms of Phase Transition}.
\newblock PhD thesis, Amsterdam, 1997.

\bibitem{deadLeaves}
D.~Jeulin.
\newblock Dead leaves models: from space tessellation to random functions.
\newblock In D.~Jeulin, editor, {\em Proceedings of the {I}nternational
  {S}ymposium on {A}dvances in {T}heory and {A}pplications of {R}andom {S}ets},
  pages 137--156. World Scientific Publishing, River Edge, NJ, 1997.

\bibitem{fock}
G.~Last and M.~D. Penrose.
\newblock Poisson process {F}ock space representation, chaos expansion and
  covariance inequalities.
\newblock {\em Probability Theory and Related Fields}, 150:663--690, 2011.

\bibitem{penrose}
M.~D. Penrose.
\newblock {\em Random Geometric Graphs}.
\newblock Oxford University Press, Oxford, 2003.

\bibitem{roy}
R.~Roy.
\newblock Percolation of {P}oisson sticks on the plane.
\newblock {\em Probability Theory and Related Fields}, 89:503--517, 1991.

\bibitem{svind}
G.~Svindland.
\newblock Continuity properties of law-invariant (quasi-)convex risk functions
  on {$L^\infty$}.
\newblock {\em Mathematics and Financial Economics}, 3:39--43, 2010.

\bibitem{vdBerg}
J.~van~den Berg.
\newblock Approximate zero-one laws and sharpness of the percolation transition
  in a class of models including two-dimensional {I}sing percolation.
\newblock {\em Annals of Probability}, 36:1880--1903, 2008.

\bibitem{vdBerg2}
J.~van~den Berg.
\newblock Sharpness of the percolation transition in the two-dimensional
  contact process.
\newblock {\em Annals of Applied Probability}, 21:374--395, 2011.

\end{thebibliography}
\bibliographystyle{abbrv}
\end{document}